\numberwithin{equation}{section}
\newcommand{\GL}{\operatorname{GL}}
\newcommand{\Sim}{\operatorname{Sim}}
\newcommand{\SO}{\operatorname{SO}}
\newcommand{\Sp}{\operatorname{Sp}}
\newcommand{\GSp}{\operatorname{GSp}}
\newcommand{\Spin}{\operatorname{Spin}}
\newcommand{\GSpin}{\operatorname{GSpin}}
\newcommand{\GSO}{\operatorname{GSO}}
\newcommand{\GO}{\operatorname{GO}}
\newcommand{\Gal}{\operatorname{Gal}}
\def\G{\ensuremath{\mathop{\textrm{\normalfont G}}}}
\def\Cent{\ensuremath{\mathop{\textrm{\normalfont Cent}}}}
\def\Im{\ensuremath{\mathop{\textrm{\normalfont Im}}}}
\theoremstyle{plain}
\newtheorem{theorem}{Theorem}[section]
\newtheorem{corollary}[theorem]{Corollary}
\newtheorem{proposition}[theorem]{Proposition}
\newtheorem{lemma}[theorem]{Lemma}
\newtheorem{remark}[theorem]{Remark}
\newtheorem{conjecture}[theorem]{Conjecture}
\DeclareMathOperator{\disc}{disc}
\DeclareMathOperator{\Span}{Span}
\DeclareMathOperator{\Ad}{Ad}
\DeclareMathOperator{\Rat}{Rat}
\DeclareMathOperator{\Hom}{Hom}
\DeclareMathOperator{\End}{End}
\DeclareMathOperator{\Res}{Res}
\newcommand{\A}{\mathbb{A}}
\newcommand{\C}{\mathbb{C}}
\newcommand{\qand}{\quad\text{and}\quad}
\tikzset{
  symbol/.style={
    draw=none,
   every to/.append style={
      edge node={node [sloped, allow upside down, auto=false]{$#1$}}}
  }
  }
\date{}
\author{Melissa Emory }
\address{Department of Mathematics, University of Toronto, Toronto, Canada}
\email{memory@math.toronto.edu}
\title[On the global GGP Conjecture for GSpin]{ On the global Gan-Gross-Prasad Conjecture\\ for General Spin Groups}
\keywords{periods of automorphic forms, $L$-values, Gan-Gross-Prasad conjecture}
\subjclass[2010]{Primary 11F70}
\begin{document}

\maketitle

\begin{abstract}
We formulate a global Gan-Gross-Prasad conjecture for general spin groups.  That is, we formulate a conjecture on a relation between periods of certain automorphic forms on $\GSpin_{n+1} \times \GSpin_n$ along the diagonal subgroup $\GSpin_n$ and some $L$-values.  To support the conjecture, we show that the conjecture holds for $n=2$ and $3$ and for certain cases for $n=4$.
\end{abstract}

\section{Introduction}
In 1992 Gross  and  Prasad (\cite{GP92}) conjectured that the non-vanishing of periods of automorphic forms on $\SO_{n+1} \times \SO_{n}$ along the diagonal subgroup $\SO_n$ is equivalent to the non-vanishing of certain automorphic $L$-functions at the central critical value.  Gan, Gross and Prasad extended this conjecture, now known as the {\sl Gan-Gross-Prasad (GGP) conjecture}, to the remaining classical groups (\cite {GGP12}). The original Gross-Prasad conjecture was refined by Ichino  and  Ikeda in \cite{II10}, where an explicit relationship was conjectured between the period integral and the central critical $L$-values. An analogous conjecture was developed for unitary groups  by N. Harris (\cite{Har14}). The purpose of this paper is to formulate a similar conjecture for a non-classical group known as the general spin group ($\GSpin$), and to verify the conjecture for the first three cases essentially by interpreting the following known results: the Waldspurger formula (\cite{Wal85}) for $n=2$, Ichino's triple product formula (\cite{Ich08}) for $n=3$, and a result of Gan-Ichino (\cite{GI11}) for $n=4$.

Let us first recall the original global Gross-Prasad conjecture. Let $F$ be a number field and $\A$ the ring of adeles over $F$. Let $(V_n, q_n)\subset (V_{n+1}, q_{n+1})$ be an inclusion of quadratic spaces of respective dimensions $n$ and $n+1$ over $F$, so that $q_{n+1}|_{V_n}=q_n$, where we assume $n$ is at least two and $V_n$ is not isomorphic to the hyperbolic plane. Then we have the natural inclusion $\SO_n \subset \SO_{n+1}$ of the corresponding special orthogonal groups $\SO_n:=\SO(V_n)$ and $\SO_{n+1}:=\SO(V_{n+1})$ over $F$, which gives rise to the inclusion $\SO_n(\A) \subset \SO_{n+1}(\A)$. Let $\pi_n$ and $\pi_{n+1}$ be irreducible tempered cuspidal automorphic representations of $\SO_n(\A)$ and $\SO_{n+1}(\A)$, respectively. The original global Gross-Prasad conjecture is as follows.

\begin{conjecture}[Original Global Gross-Prasad  Conjecture \cite{GP92}]
Assume that for every place $v$ of $F$, $\Hom_{\SO_n(F_v)}(\pi_{{n+1},v} \otimes \pi_{n,v}, \mathbb{C})\neq \{0\}$. Then there exist vectors $\phi \in V_{\pi_{n+1}}$ and $f\in V_{\pi_n}$ such that
\[
\int_{\SO_n(F)\backslash \SO_n(\A)}\phi(g)f(g)dg \neq 0
\]
if and only if the tensor product $L$-function $L(1/2,\pi_{n+1} \times \pi_n)$ does not vanish.
\end{conjecture}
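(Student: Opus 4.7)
The standard approach is the relative trace formula comparison initiated by Jacquet and Rallis. The plan is to set up two relative trace formulas in parallel: one on $\SO_{n+1}\times\SO_n$ whose geometric side encodes the $\SO_n$-period integral in question, and a second on $\GL_{n+1}\times\GL_n$ whose spectral side is controlled by the Rankin--Selberg integrals of Jacquet--Piatetski-Shapiro--Shalika and therefore by the central value $L(1/2,\pi_{n+1}\times\pi_n)$. To transport identities from one formula to the other, I would use smooth transfer of test functions together with a fundamental lemma matching orbital integrals for the unit element of the spherical Hecke algebra at almost all places.

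More concretely, first I would fix the cuspidal pair $(\pi_{n+1},\pi_n)$ and its functorial transfer $\Pi$ to $\GL_{n+1}\times\GL_n$, so that $L(s,\pi_{n+1}\times\pi_n)=L(s,\Pi)$. Next I would construct matched test functions on the two groups and prove orbit-by-orbit agreement of the geometric sides. A spectral disentanglement argument then isolates the contribution of a single global packet within the cuspidal spectrum. The local hypothesis $\Hom_{\SO_n(F_v)}(\pi_{n+1,v}\otimes\pi_{n,v},\C)\neq\{0\}$ at every place is precisely what, via the local Gan--Gross--Prasad conjecture, selects the pure inner form of $\SO_{n+1}\times\SO_n$ on which the period can be non-zero, so that the spectral comparison yields the desired equivalence on the correct form.

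The principal obstacles are the ones that have kept this conjecture open in general: (i) proving the fundamental lemma and smooth transfer of orbital integrals in the required generality, (ii) carrying out a fine enough spectral decomposition to justify isolating a single packet, in particular controlling the contributions from the continuous spectrum, and (iii) the reliance on the local Gan--Gross--Prasad conjecture and its refinement by Ichino--Ikeda. For the low-rank cases $n=2,3,4$ actually treated in this paper, I would bypass the trace formula entirely and instead re-interpret the classical period--$L$-value identities of Waldspurger ($n=2$), Ichino's triple product formula ($n=3$), and Gan--Ichino ($n=4$) as instances of the general conjecture.
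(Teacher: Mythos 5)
This is a conjecture stated for background and attributed to Gross--Prasad (1992); the paper gives no proof of it, and indeed it remains open in general, so there is no ``paper's own proof'' against which to compare your argument. Your proposal is really a research program, not a proof: you correctly identify the Jacquet--Rallis relative trace formula comparison with $\GL_{n+1}\times\GL_n$ as the standard strategy, and you correctly flag the three obstructions (fundamental lemma and smooth transfer, spectral isolation, and the local GGP/Ichino--Ikeda input) that have kept it open for orthogonal groups --- this comparison has been pushed to completion for unitary groups but not for $\SO$. Your final paragraph accurately describes what the paper actually does: it does not attempt the general conjecture at all, but instead formulates a $\GSpin$ analogue of the Ichino--Ikeda refinement and verifies it in low rank by reinterpreting Waldspurger ($n=2$), Ichino's triple product formula ($n=3$), and Gan--Ichino ($n=4$). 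So your proposal is a reasonable survey of the landscape, but you should be careful to distinguish between ``here is a plausible attack'' and ``here is a proof''; none of steps (i)--(iii) is established for $\SO_{n+1}\times\SO_n$ in the generality the conjecture requires, and the paper never claims otherwise.
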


Ichino-Ikeda (\cite{II10}) refined this conjecture by writing down an explicit (conjectural) relationship between the period integral and $L(1/2,\pi_{n+1} \times\pi_n)$ as follows. First define
\[
\mathcal{P}:V_{\pi_{n+1}}\otimes V_{\pi_n}\longrightarrow\C
\]
by
\[
\mathcal{P}(\phi,f)=\int_{\SO_n(F)\backslash \SO_n(\A)}\phi(g)f(g)dg,
\]
for $\phi\in V_{\pi_{n+1}}$ and $f\in V_{\pi_n}$, where $dg$ is the Tamagawa measure of $\SO_n(\A)$. This is, of course, nothing but the period integral of the above original Gross-Prasad conjecture. The basic idea of Ichino-Ikeda is to define a ``local period" $\alpha_v$ by using the matrix coefficients of the local representations $\pi_{n+1, v}$ and $\pi_{n, v}$ so that the infinite product $\prod_v\alpha_v$ is defined. They then conjecture that the global $|\mathcal{P}(\phi,f)|^2$ is proportional to the product $\prod_v\alpha_v(\phi_v,f_v)$ for factorizable $\phi=\otimes_v\phi_v$ and $f=\otimes_vf_v$ and the $L$-value $L(1/2,\pi_{n+1} \times \pi_n)$ appears in the constant of proportionality.

To state their conjecture more precisely, first let
\[
\mathcal{B}_{{\pi}_{n+1}}:V_{\pi_{n+1}} \otimes \overline{V}_{\pi_{n+1}} \rightarrow \mathbb{C}\qand
\mathcal{B}_{{\pi}_{n}}:V_{\pi_{n}} \otimes \overline{V}_{\pi_n} \rightarrow \mathbb{C}
\]
be the Petersson pairings defined as usual via the Tamagawa measures. Then fix isomorphisms
\[
\pi_n\cong \otimes_v \pi_{n,v}\qand\pi_{n+1}\cong\otimes_v\pi_{n+1,v},
\]
and decompositions
\[
\mathcal{B}_{\pi_{n+1}}=\prod_v \mathcal{B}_{\pi_{n+1,v}} \qand \mathcal{B}_{\pi_{n}}=\prod_v \mathcal{B}_{\pi_{n,v}},
\]
where
\[
\mathcal{B}_{\pi_{n+1,v}}: \pi_{n+1,v} \otimes \overline{\pi}_{n+1,v} \rightarrow \mathbb{C}\qand\mathcal{B}_{\pi_{n,v}}: \pi_{n,v} \otimes \overline{\pi}_{n,v} \rightarrow \mathbb{C}
\]
are local pairings. Also fix a decomposition $dg=\prod_vdg_v$ of the Tamagawa measure $dg$ on $\SO_n(\A)$.

Then define an $\SO_n(F_v)\times\SO_n(F_v)$-invariant functional
\begin{equation}\label{E:local_integral_alpha}
\alpha^{\natural}_v:(\pi_{n+1,v}\boxtimes\overline{\pi}_{n+1,v})\otimes(\pi_{n,v}\boxtimes\overline{\pi}_{n,v})\rightarrow \mathbb{C}
\end{equation}
by
\begin{align*}
&\alpha^{\natural}_v(\phi_{1,v},\phi_{2,v};f_{1,v},f_{2,v}):=\\
&\hspace{1in}\int_{\SO_n(F_v)}\mathcal{B}_{\pi_{n+1,v}}(\pi_{n+1,v}(g_{v})\phi_{1,v},\phi_{2,v})
\mathcal{B}_{\pi_{n,v}}(\pi_{n,v}(g_{v})f_{1,v},f_{2,v})dg_{v}
\end{align*}
for $\phi_{1,v},\phi_{2,v}$ in ${\pi_{n+1,v}}$ and $f_{1,v},f_{2,v}$ in $\pi_{n,v}$. Ichino-Ikeda have proven that if $\pi_{i,v}$ is tempered then the integral for $\alpha^{\natural}_v$ converges absolutely, and
\[
\alpha^{\natural}_v(\phi_{1, v},\phi_{2, v}; f_{1, v}, f_{2, v})=\Delta_{\SO_{n+1,v}}
\frac{L_v(1/2,\pi_{n,v}\times\pi_{n+1,v})}{L_v(1,\pi_{n,v},\Ad)L_v(1,\pi_{n+1,v},\Ad)}
\]
for almost all $v$, where
\[
\Delta_{\SO_{n+1},v}:=\begin{cases} \zeta_v(2)\zeta_v(4)\cdots\zeta_v(2m) & \text{ if } \dim V_{n+1,v} = 2m+1,\\
\zeta_v(2)\zeta_v(4)\cdots \zeta_v(2m-2) \cdot L_v(m, \chi_{V_{n+1, v}}) & \text { if } \dim V_{n+1,v}=2m,
\end{cases}
\]
where $\chi_{V_{n+1, v}}$ is the character associated with the discriminant of the quadratic form associated to $V_{n+1, v}$.  Accordingly, for all $v$ define the normalized $\SO_{n+1}(F_v)\times\SO_n(F_v)$-invariant functional
\[
\alpha_v:(\pi_{n+1,v}\boxtimes\overline{\pi}_{n+1,v})\otimes(\pi_{n,v}\boxtimes\overline{\pi}_{n,v})\rightarrow \mathbb{C}
\]
by setting
\[
\alpha_v
=\Delta_{\SO_{n+1}, v}^{-1}\dfrac{L_v(1,\pi_{n,v},\Ad)L_v(1,\pi_{n+1,v},\Ad)}{L_v(1/2,\pi_{n,v}\times\pi_{n+1,v})}
\alpha^{\natural}_v,
\]
so that the infinite product
\[
\prod_v\alpha_v(\phi_{1,v},\phi_{2,v};f_{1,v},f_{2,v})
\]
is well-defined. Also we write
\[
\alpha_v(\phi_v, f_v):=\alpha_v(\phi_{v},\phi_{v};f_{v},f_{v})
\]
for $\phi_v\in\pi_{n+1, v}$ and $f_v\in\pi_{n, v}$.

Using these notations, we can state the Ichino-Ikeda refinement of the global Gross-Prasad conjecture for the special orthogonal groups as follows.
\begin{conjecture}[Ichino-Ikeda Refinement]
Assume $\pi_{n+1}$ and $\pi_n$ are tempered cuspidal automorphic representations of $\SO_{n+1}(\A)$ and $\SO_n(\A)$, respectively, and $\pi_{n+1}$ and $\pi_n$ appear with multiplicity one in the discrete spectrum.  Then for each factorizable $\phi=\otimes_v\phi_v\in V_{\pi_{n+1}}$ and $f=\otimes_vf_v\in V_{\pi_n}$, we have
\[
|\mathcal{P}(\phi,f)|^2=\frac{\Delta_{\SO_{n+1}}}{2^{\beta}}\frac{L(1/2,\pi_n \times \pi_{n+1})}{L(1,\pi_n,\Ad)L(1,\pi_{n+1},\Ad)}\prod_v \alpha_v(\phi_v,f_v),
\]
where $2^{\beta}$ is the product of cardinalities of the component groups attached to the $L$-packets for $\pi_{n+1}$ and $\pi_n$ and
\[
\Delta_{\SO_{n+1}}:=\begin{cases} \zeta(2)\zeta(4)\cdots\zeta(2m) & \text{ if } \dim V_{n+1} = 2m+1,\\
\zeta(2)\zeta(4)\cdots \zeta(2m-2) \cdot L(m, \chi_{V_{n+1}}) & \text { if } \dim V_{n+1}=2m,
\end{cases}
\]
where $\chi_{V_{n+1}}$ is the quadratic character associated with $V_{n+1}$.
\end{conjecture}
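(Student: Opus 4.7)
The established strategy for proving a refined period formula of this shape is the relative trace formula (RTF) comparison, following Jacquet-Rallis in the unitary case and adapted by Waldspurger and Beuzart-Plessis in the orthogonal setting. My plan is to set up two RTFs in parallel: one on $\SO_{n+1}\times\SO_n$ against the diagonal copy of $\SO_n\times\SO_n$, whose spectral expansion contains $|\mathcal{P}(\phi,f)|^2$ summed over cuspidal data and whose geometric side is a regulated sum of relative orbital integrals; and a second on $\GL_{n+1}\times\GL_n$ with a Rankin-Selberg type period that recovers $L(\tfrac12,\pi_n\times\pi_{n+1})$ on the spectral side. The target is a term-by-term comparison of the two formulas.

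The first step is a matching of the geometric sides. I would identify regular orbits on both groups through a common set of invariants, construct a smooth transfer $f\leftrightarrow f'$ of test functions, and prove the fundamental lemma for the spherical Hecke algebra at almost all places. To match the stable geometric side on the $\GL$ side one must sum over all pure inner forms of $\SO_n$ (equivalently, over quadratic spaces $V_n$ of the given dimension and discriminant class). Packaging this sum is precisely what produces the factor $2^\beta$, which acquires the meaning of the cardinality of the global component group attached to the $L$-packets of $\pi_{n+1}$ and $\pi_n$.

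Next, after geometric matching, one passes to the spectral side and isolates the contribution of the specific $L$-packets of $(\pi_{n+1},\pi_n)$ using Arthur's endoscopic classification together with the Bessel model uniqueness theorem of Aizenbud-Gourevitch-Rallis-Schiffmann. By varying the test function, the identity is extracted for a single pair $(\pi_{n+1},\pi_n)$. The ratio $L(\tfrac12,\pi_n\times\pi_{n+1})/(L(1,\pi_n,\Ad)L(1,\pi_{n+1},\Ad))$ emerges naturally: the numerator from the Rankin-Selberg integral on the $\GL$ side, and the denominator from the Langlands-Shahidi normalization of the spectral terms on the $\SO$ side (equivalently, from the residue of the Eisenstein series computing the Petersson norm). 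The local functional $\alpha^{\natural}_v$ defined via matrix coefficients is then identified with the local functional produced by the RTF by local uniqueness of the Bessel model, with the unramified constant fixed by the spherical calculation already stated in the excerpt.

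The principal obstacle is the combination of smooth transfer and stabilization: matching orbital integrals of general Schwartz-Bruhat functions at archimedean and ramified $p$-adic places, and then stabilizing both trace formulas so that only the desired tempered $L$-packets contribute with the correct multiplicities, requires the full weight of endoscopy for orthogonal groups. A secondary but nontrivial difficulty is establishing convergence of the geometric expansion for the diagonal $\SO_n$ period; unlike the $\GL$ case there is no obvious Eisenstein regularization, and one must adapt Arthur's truncation to this GGP setting, as carried out by Zydor and collaborators in related contexts.
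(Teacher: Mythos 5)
The statement you are attempting to prove is labeled a \emph{conjecture} in the paper, and it is the general Ichino--Ikeda conjecture for special orthogonal groups, which is open. The paper does not prove it; it merely recalls it as the prototype for the author's own Conjectures \ref{C:conj} and \ref{C:conj2} for $\GSpin$, and then invokes the known \emph{low-rank} verifications ($n=2$ via Waldspurger, $n=3$ via Ichino's triple product formula, $n=4$ via Gan--Ichino) as evidence. So there is no proof in the paper against which to compare your argument, and any claimed proof of the general case would be a major theorem in the subject.

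Your outline is a fair description of the relative trace formula \emph{strategy} that has been pursued (Jacquet--Rallis for unitary groups, with completions by W.\ Zhang, Beuzart-Plessis, Chaudouard, Zydor; and the Bessel-period RTF of Liu and others in the orthogonal/symplectic setting), but it is a research program, not a proof. Every step you list as ``I would'' is itself a substantial open problem in the orthogonal case at the level of generality the conjecture requires: smooth transfer of test functions at all places including archimedean ones, the relevant fundamental lemma, the truncation/regularization needed just to make the geometric side of the $\SO_n$-diagonal RTF converge, and the stabilization isolating a single tempered $L$-packet with exactly the factor $2^\beta$. None of these is supplied, nor can they be condensed to a few lemmas; collectively they constitute the content of the conjecture itself. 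It is also worth noting that the cases that \emph{are} known were proved by entirely different means, namely theta correspondence and seesaw duality, not the RTF; so even as a guide to what is established, your sketch is somewhat orthogonal to the existing literature on the low-rank cases.

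If your goal is to engage with the paper's actual contribution, the right targets are: the convergence and unramified computation of $\alpha^{\natural}_{\omega_v}$ (Propositions \ref{P:convergence_local_period} and \ref{P:unramified_case}), which reduce to Ichino--Ikeda's $\SO$ computations after twisting out central characters; the component-group computation (Lemma \ref{component_group_abelian_2_group} and the corollary giving $4|\mathcal{S}_{\phi_n}||\mathcal{S}_{\phi_{n+1}}| = \tfrac12|\mathcal{S}_{\phi_n,sc}||\mathcal{S}_{\phi_{n+1},sc}|$); and the low-rank verifications of Conjecture \ref{C:conj} by matching with the Waldspurger, Ichino, and Gan--Ichino formulas. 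Those are theorems in the paper with proofs that can be checked; the general Ichino--Ikeda conjecture is not.
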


\begin{remark}
It should be noted that what is denoted by $\mathcal{P}(\phi, f)$ in \cite{II10} is our $|\mathcal{P}(\phi,f)|^2$.  See \cite[Conjecture 6.2.1]{Xue} for a similar conjecture if $n$ is odd and $\pi_{n+1}$ appears with multiplicity two in the discrete spectrum.
\end{remark}

This refined conjecture for $n=2$ follows from the well-known Waldspurger formula (\cite{Wal85}) and the one for $n=3$ follows from Ichino's triple product formula (\cite{Ich08}). Also for $n=4$, Gan and Ichino in \cite{GI11} have proven the conjecture under certain assumptions for $\pi_{n+1}$ and $\pi_n$.

\quad

Our goal in this paper is to generalize this conjecture for the general spin groups and verify it for the cases $n=2, 3$ and $4$ by using \cite{Wal85}, \cite{Ich08} and \cite{GI11}, respectively, as above.

Let us first briefly recall some generalities of the general spin group. Let $(V_n, q_n)$ be a quadratic space over $F$ of dimension $n$. The general spin group associated with $(V_n, q_n)$, which we denote by $\GSpin(V_n)$ or simply by $\GSpin_n$, is a reductive group over $F$ such that we have the short exact sequence
\[
1\longrightarrow \GL_1\longrightarrow\GSpin(V_n)\longrightarrow\SO(V_n)\longrightarrow 1.
\]
It should be noted that $\GL_1$ is in the center of $\GSpin(V_n)$ and is the connected component $Z_n^\circ$ of the center if $n> 2$.  If $n=2$ then $\GSpin_2$ is commutative and hence the connected component of the center is larger than this $\GL_1$. However, as a convention in this paper, we set
\[
Z_n^\circ=\GL_1
\]
even when $n=2$. Also the group $\GSpin(V_n)$ is equipped with a homomorphism
\[
N:\GSpin(V_n)\longrightarrow\GL_1,
\]
which is called the spinor norm. Note that for $z\in \GL_1\subseteq Z_n^\circ$ we have $N(z)=z^2$.

Next assume we have an inclusion $(V_n, q_n)\subseteq (V_{n+1}, q_{n+1})$ of quadratic spaces. Then we have the natural inclusion $\GSpin(V_n)\subseteq \GSpin(V_{n+1})$, which makes the diagram
\[
\begin{tikzcd}
1\ar[r]&Z_{n+1}^\circ\ar[r]&\GSpin(V_{n+1})\ar[r]&\SO(V_{n+1})\ar[r]&1\\
1\ar[r]&Z_{n}^\circ\ar[r]\ar[u, symbol={=}]&\GSpin(V_{n})\ar[r]\ar[u, symbol=\subseteq]&\SO(V_n)\ar[u, symbol=\subseteq]\ar[r]&1
\end{tikzcd}
\]
commute.

Now, let $\pi_{n+1}$ and $\pi_n$ be tempered cuspidal automorphic representations of $\GSpin_{n+1}(\A)$ and $\GSpin_n(\A)$, respectively, and let $\omega_{\pi_{n+1}}$ and $\omega_{\pi_n}$ be the restrictions to $Z_n^\circ(\A)=\A^\times$ of the central characters of $\pi_{n+1}$ and $\pi_n$, respectively, so that $\omega_{\pi_{n+1}}$ and $\omega_{\pi_n}$ are Hecke characters.  Furthermore, assume that the product $\omega_{\pi_{n+1}}\omega_{\pi_n}$ has a square root; namely there exists a Hecke character $\omega:\A^\times\to\C^1$ such that
\[
\omega^2=\omega_{\pi_{n+1}}{\omega_{\pi_{n}}}.
\]
Note that such $\omega$ is not unique and two such $\omega$'s differ by a quadratic character. For each such $\omega$, we define the global period
\[
\mathcal{P}_{\omega}:V_{\pi_{n+1}}\otimes V_{\pi_n}\longrightarrow\C
\]
by
\[
\mathcal{P}_\omega(\phi,f):=\int\limits_{Z_{n}^\circ(\A)\GSpin_n(F)\backslash \GSpin_n(\A)}\phi(g)f(g)\omega(N(g))^{-1}dg,
\]
where $\phi\in V_{\pi_{n+1}}$ and $f\in V_{\pi_n}$, and $dg$ is the Tamagawa measure of $\GSpin_n(\A)$. Because of the assumption on the central characters that $\omega^2=\omega_{\pi_{n+1}}\omega_{\pi_n}$, this integral is well-defined.  Then in the same way as the $\SO$-case, we define the local period $\alpha_{\omega_v}(\phi_v,f_v)$ in such a way that $\alpha_{\omega_v}(\phi_v,f_v)=1$ for almost all $v$ so that the product $\prod_v\alpha_{\omega_v}(\phi_v,f_v)$ makes sense, and we make an analogous conjecture.

To be precise, by fixing isomorphisms
\[
\pi_n\cong \otimes_v \pi_{n,v}\qand\pi_{n+1}\cong\otimes_v\pi_{n+1,v}
\]
and decompositions
\[
\mathcal{B}_{\pi_{n+1}}=\prod_v \mathcal{B}_{\pi_{n+1,v}} \quad \text {and} \quad \mathcal{B}_{\pi_{n}}=\prod_v \mathcal{B}_{\pi_{n,v}},
\]
where the global $\mathcal{B}_{\pi_{n+1}}$ and $\mathcal{B}_{\pi_{n}}$ are the Petersson pairings defined by the Tamagawa measures, we define the $\GSpin_{n+1}(F_v)\times\GSpin_n(F_v)$-invariant functional
\[
\alpha^{\natural}_{\omega_v}:(\pi_{n+1,v}\boxtimes\overline{\pi}_{n+1,v})\otimes(\pi_{n,v}\boxtimes\overline{\pi}_{n,v})
\rightarrow\C
\]
by
\begin{align*}
&\alpha^{\natural}_{\omega_v}(\phi_{1,v},\phi_{2,v};f_{1,v},f_{2,v})\\
&:=\int\limits_{Z_n^\circ(F_v)\backslash\GSpin_n(F_v)}\mathcal{B}_{\pi_{n+1,v}}(\pi_{n+1,v}(g_{v})\phi_{1,v},\phi_{2,v})
\mathcal{B}_{\pi_{n,v}}(\pi_{n,v}(g_{v})f_{1,v},f_{2,v})\omega_v(N(g_v))^{-1}dg_{v}
\end{align*}
for $\phi_{1,v},\phi_{2,v}$ in ${\pi_{n+1,v}}$ and $f_{1,v},f_{2,v}$ in $\pi_{n,v}$, where we also fix the factorization $dg=\prod_vdg_v$ of the Tamagawa measure on $\GSpin_n(\A)$.

Then we prove that if $\pi_{n+1, v}$ and $\pi_{n, v}$ are tempered then the integral for $\alpha^{\natural}_{\omega_v}(\phi_v,f_v)$ converges absolutely, and
\[
\alpha^{\natural}_{\omega_v}(\phi_{1, v},\phi_{2, v}; f_{1, v}, f_{2, v})=\Delta_{\SO_{n+1,v}}
\frac{L_v(1/2,\pi_{n,v}\times\pi_{n+1,v}\otimes\omega_v^{-1})}{L_v(1,\pi_{n,v},\Ad)L_v(1,\pi_{n+1,v},\Ad)}
\]
for almost all $v$, where $\Delta_{\SO_{n+1,v}}$ as before. Hence if we normalize $\alpha^{\natural}_{\omega_v}$ by setting
\[
\alpha_{\omega_v}
:=\Delta_{\SO_{n+1},v}^{-1}\frac{L_v(1,\pi_{n,v},\Ad)L_v(1,\pi_{n+1,v},\Ad)}{L_v(1/2,\pi_{n,v}\times\pi_{n+1,v}\otimes\omega_v^{-1})}
\alpha^{\natural}_{\omega_v},
\]
then the infinite product
\[
\prod_v\alpha_{\omega_v}(\phi_{1,v},\phi_{2,v};f_{1,v},f_{2,v})
\]
is well-defined. Also we write
\[
\alpha_{\omega_v}(\phi_v, f_v):=\alpha_{\omega_v}(\phi_{v},\phi_{v};f_{v},f_{v})
\]
for $\phi_v\in\pi_{n+1, v}$ and $f_v\in\pi_{n, v}$.

Then we make the following conjecture, which we call the global GGP conjecture for $\GSpin$.

\begin{conjecture} [The global GGP conjecture for $\GSpin$]\label{C:conj}
Let $\pi_n \cong \otimes_v \pi_{n,v}$ and $\pi_{n+1} \cong \otimes_v \pi_{n+1,v}$ be irreducible tempered cuspidal automorphic representations of $G_n(\mathbb{A})$ and $G_{n+1}(\mathbb{A})$, respectively, and we assume $\pi_{n+1}$ and $\pi_n$ appear with multiplicity one in the discrete spectrum. Assume there exists $\omega$ such that $\omega^2=\omega_{\pi_{n+1}}\omega_{\pi_n}$.  Then for each factorizable $\phi=\otimes_v\phi_v\in V_{\pi_{n+1}}$ and $f=\otimes_vf_v\in V_{\pi_n}$, we have
\[
\left|\mathcal{P}_\omega(\phi, f)\right|^2 =\frac{\Delta_{\SO_{n+1}}}{2^{\beta}}\frac{L(1/2, \pi_{n+1}  \times \pi_{n} \otimes \omega^{-1})}{L(1, \pi_{n+1} , \Ad)L(1, \pi_{n} , \Ad)}\prod_v \alpha_{\omega_v}(\phi_v, f_v),
\]
where $\Delta_{\SO_{n+1}}$ is as before.
\end{conjecture}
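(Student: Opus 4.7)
The plan is to reduce the $\GSpin$ conjecture to the analogous $\SO$ statement by exploiting the short exact sequence $1\to Z_n^\circ\to\GSpin_n\to\SO_n\to 1$ together with the spinor norm $N$ and the central-character hypothesis $\omega^2=\omega_{\pi_{n+1}}\omega_{\pi_n}$. First I would settle the two local claims stated just after the definition of $\alpha^\natural_{\omega_v}$: absolute convergence of the integral for tempered $\pi_{n+1,v}$ and $\pi_{n,v}$, and the unramified evaluation at almost all $v$. Both should follow by adapting the corresponding Ichino-Ikeda calculations, since the integration domain $Z_n^\circ(F_v)\backslash\GSpin_n(F_v)$ essentially maps onto $\SO_n(F_v)$ and the inserted factor $\omega_v^{-1}\circ N$ is exactly what converts the original tensor $L$-factor into the twisted $L_v(1/2,\pi_{n,v}\times\pi_{n+1,v}\otimes\omega_v^{-1})$; the adjoint $L$-factors are unaffected since the adjoint representation is invariant under central twisting.

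Globally, the strategy is to descend $\mathcal{P}_\omega(\phi,f)$ under the projection $\GSpin_n\to\SO_n$ to a period integral on $\SO_n(F)\backslash\SO_n(\A)$. The integrand $\phi(g)f(g)\omega(N(g))^{-1}$ is designed to be $Z_n^\circ(\A)$-invariant so that it descends, at least in principle, to a period of a pair of automorphic forms on $\SO_{n+1}\times\SO_n$ obtained by twisting $\pi_{n+1}$ and $\pi_n$ by characters derived from $\omega\circ N$. One then matches this with the refined Ichino-Ikeda $\SO$ identity, obtaining the twisted $L$-function $L(1/2,\pi_{n+1}\times\pi_n\otimes\omega^{-1})$ in the numerator.

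For the verification in the small cases I would exploit the exceptional isomorphisms $\GSpin_2\cong\Res_{E/F}\GL_1$ (or $\GL_1\times\GL_1$), $\GSpin_3\cong\GL_2$, $\GSpin_4\cong\{(g_1,g_2)\in\GL_2\times\GL_2:\det g_1=\det g_2\}$, and $\GSpin_5\cong\GSp_4$. Under these identifications $\mathcal{P}_\omega$ becomes, for $n=2$, the toric period of a $\GL_2$ cusp form handled by Waldspurger's formula; for $n=3$, a triple-product integral on $\GL_2\times\GL_2\times\GL_2$ handled by Ichino's triple-product formula; and for $n=4$, a Bessel-type period on $\GSp_4\times\GL_2\times\GL_2$ handled by Gan-Ichino. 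In each case one translates the resulting identity back into the $\GSpin$ notation, verifying the matching of the $L$-functions, the adjoint factors $L(1,\pi,\Ad)$, the measure conventions, and the factor $\Delta_{\SO_{n+1}}/2^\beta$.

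The main obstacle will be the bookkeeping of normalizations. The Langlands dual of $\GSpin_n$ is $\GSp$ or $\GSpin$ depending on parity, not $\Sp$ or $\SO$, so the tensor and adjoint $L$-functions on the $\GSpin$ side must be identified carefully with their counterparts on the classical groups after applying the exceptional isomorphisms; twisting by $\omega$ interacts nontrivially with this identification and is where most of the care will be needed. A second obstacle is the component-group exponent $2^\beta$, since $L$-packets for $\GSpin_n$ can differ in size and parameterization from those for $\SO_n$. Finally, the possible failure of surjectivity of $\GSpin_n(\A)\to\SO_n(\A)$ on adelic points, together with the non-uniqueness of $\omega$ up to quadratic twist, must be handled when executing the descent and when pinning down the section of the central exact sequence.
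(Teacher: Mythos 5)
The statement you are addressing is a \emph{conjecture}, not a theorem; the paper never proves it in general. What the paper actually does is (a) establish that the conjecture is well-posed --- absolute convergence of the local integrals for tempered data (Proposition \ref{P:convergence_local_period}) and the unramified evaluation (Proposition \ref{P:unramified_case}) so that the Euler product $\prod_v\alpha_{\omega_v}$ makes sense --- and (b) \emph{verify} the conjecture in the cases $n=2,3$ and partially $n=4$ using Waldspurger, Ichino, and Gan--Ichino. Your first and third paragraphs track this accurately.

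Your second paragraph, however, contains a genuine error. You propose to descend $\mathcal{P}_\omega(\phi,f)$ to a period of a \emph{pair of automorphic forms} on $\SO_{n+1}\times\SO_n$ by ``twisting $\pi_{n+1}$ and $\pi_n$ by characters derived from $\omega\circ N$,'' and then invoke Ichino--Ikeda. This does not work. Although the quotient $Z_n^\circ(\A)\GSpin_n(F)\backslash\GSpin_n(\A)$ is indeed isomorphic to $\SO_n(F)\backslash\SO_n(\A)$ (by Hilbert 90, so your worry about non-surjectivity of $\GSpin_n(\A)\to\SO_n(\A)$ is actually moot), the integrand $\phi(g)f(g)\omega(N(g))^{-1}$ is only $Z_n^\circ(\A)$-invariant as a \emph{product}; $\phi$ and $f$ do not individually descend. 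To split the twist, one would need separate square roots $\omega_{\pi_{n+1}}^{1/2}$ and $\omega_{\pi_n}^{1/2}$, but the hypothesis only supplies a square root of the \emph{product} $\omega_{\pi_{n+1}}\omega_{\pi_n}$. The paper's Remark after Proposition \ref{P:unramified_case} flags exactly this point: the factoring into individual square roots is available locally at unramified places (where every unramified character has an unramified square root), which is why the unramified computation can be reduced to \cite{II10}, but it fails at ramified places and globally. This is precisely why the paper treats Conjecture \ref{C:conj} as a genuine \emph{generalization} of Ichino--Ikeda rather than something equivalent to it, and why the verification has to proceed case by case via the exceptional isomorphisms and the theorems of Waldspurger, Ichino, and Gan--Ichino (which are unconditional), rather than via the Ichino--Ikeda \emph{conjecture} (which would be circular in any event, since that too is only a conjecture).

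One further point on the low-rank cases: matching the $\GSpin$ period with the classical period is not quite free. For $n=3$ and $n=4$ the representations $\pi_4$ and $\pi_5$ must first be lifted to representations $\sigma_1\boxtimes\sigma_2$ or $\tau$ of the ambient group $\widetilde{G}_4$ (resp.\ a theta lift to $\GSp_4$), using the Hiraga--Saito theory of restriction, and the resulting discrepancy between $\mathcal{B}_{\pi_4}$ and $\mathcal{B}_{\sigma_1\boxtimes\sigma_2}$ (or $\mathcal{B}_\tau$) produces a volume constant $C$ that must be tracked and identified with $|\mathfrak{X}_{\sigma_1\otimes\sigma_2}|$ or $|\mathfrak{X}_\tau|$; this in turn feeds into the component-group factor $2^\beta$ via \cite{LM15}. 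You gesture at ``bookkeeping of normalizations,'' which is the right instinct, but this particular Hiraga--Saito constant is the main new ingredient and deserves to be named.
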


Let us mention that the $L$-function $L(\pi_{n+1} \times \pi_n \otimes \omega^{-1})$ is conjecturally holomorphic at $s=1/2$ by, say, \cite[Theorem 5.1]{PSR87}. Also the adjoint $L$-functions $L(s,\pi_{n+1},\Ad)$ and $L(1,\pi_n,\Ad)$ are conjecturally nonzero holomorphic at $s=1$ as in \cite[(3.2) and pg. 483]{LM15}.

An interesting quantity in the above conjecture is $2^\beta$, which is conjecturally related to the cardinalities of the component groups attached to the $L$-parameters of $\pi_{n+1}$ and $\pi_n$. To discuss this issue, we first set up some general notations. Let $G$ be a reductive group over our number field $F$ and let $\pi$ be a cuspidal automorphic representation of $G(\A)$. Furthermore, let $\mathcal{L}_F$ be the hypothetical global Langlands group of $F$ and let $\phi=\phi_{\pi}: \mathcal{L}_F \rightarrow \leftidx{^}LG = \widehat{G} \rtimes \Gamma_F$ be the hypothetical global Langlands parameter of $\pi$, where $\Gamma_F=\Gal(\overline{F}/F)$. Set $S_{\phi}:=\Cent(\Im(\phi),\widehat{G})$ and define
\[
\mathcal{S}_{\phi}:=S_{\phi}/S_{\phi}^0Z(\widehat{G})^{\Gamma_F},
\]
where $S^0_{\phi}$ is the identity component of the complex reductive group $S_{\phi}$, $Z(\widehat{G})$ is the center of $\widehat {G}$, and $Z(\widehat{G})^{\Gamma_F}$ is the subgroup of invariants in $Z(\widehat{G})$ under the natural action of $\Gamma_F$.  Denote by $\widehat{G}_{sc}$ the simply connected cover of the derived group $\widehat{G}_{der}$ of $\widehat{G}$, and by ${S}_{\phi,sc}$ the full pre-image of ${S}_{\phi}$ in $\widehat{G}_{sc}$. We then define
\[
\mathcal{S}_{\phi,sc}:=S_{\phi,sc}/S_{\phi,sc}^{\circ}.
\]
By using these notations, we make the following conjecture.
\begin{conjecture}\label{C:conj2}
Let $\phi_{n+1}$ and $\phi_n$ be the (conjectural) global Langlands parameters of $\pi_{n+1}$ and $\pi_n$, respectively. If $\pi_{n+1}$ and $\pi_n$ appear with multiplicity one in the discrete spectrum, then 
$$2^{\beta}=4|\mathcal{S}_{\phi_n}||\mathcal{S}_{\phi_{n+1}}|=\dfrac{1}{2}|\mathcal{S}_{\phi_n,sc}||\mathcal{S}_{\phi_{n+1},sc}|.$$
\end{conjecture}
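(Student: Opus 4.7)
The goal is to identify the ad hoc constant $2^\beta$ appearing in Conjecture \ref{C:conj} with an intrinsic invariant of the $L$-parameters. My plan is to relate the $\GSpin$-side component groups $\mathcal{S}_{\phi_k}$ and $\mathcal{S}_{\phi_k,sc}$ to the component groups $\mathcal{S}_{\phi_k^{\SO}}$ of associated $\SO$-parameters, and to compare with the analogous formula $2^\beta=|\mathcal{S}_{\phi_n^{\SO}}||\mathcal{S}_{\phi_{n+1}^{\SO}}|$ that is implicit in the Ichino-Ikeda refinement for $\SO$ recalled in the introduction.

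First, I would use the dual short exact sequence
\[
1\longrightarrow\widehat{\SO}_k\longrightarrow\widehat{\GSpin}_k\xrightarrow{\;\operatorname{sim}\;}\mathbb{C}^\times\longrightarrow 1
\]
(dual to $1\to\GL_1\to\GSpin_k\to\SO_k\to1$) to attach to each conjectural parameter $\phi_k\colon\mathcal{L}_F\to{}^L\GSpin_k$ a companion $\SO$-parameter $\phi_k^{\SO}$, using the hypothesis $\omega^2=\omega_{\pi_{n+1}}\omega_{\pi_n}$ to ensure that, after a central twist by $\omega$, the combined similitude is trivialized and one lands in $\widehat{\SO}_k$.

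Next, I would compare centralizers via the induced short exact sequence
\[
1\longrightarrow\mathbb{C}^\times\longrightarrow S_{\phi_k}\longrightarrow S_{\phi_k^{\SO}}\longrightarrow 1,
\]
keeping track of the interplay between the connected center $Z(\widehat{\GSpin}_k)=\mathbb{C}^\times$, which is absorbed into $S_{\phi_k}^{\circ}Z(\widehat{G})^{\Gamma_F}$ on the $\GSpin$-side, and the discrete center $\{\pm I\}$ which contributes nontrivially on the $\SO$-side. Combining the ratio that emerges from this computation with the two-fold ambiguity in the choice of $\omega$ (defined only up to a quadratic character) should account for the factor of $4$ in the first equality.

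For the second equality, I would analyze the simply connected covers: $\widehat{\GSpin}_{2m+1,sc}=\Sp_{2m}$ (already simply connected) and $\widehat{\GSpin}_{2m,sc}=\Spin_{2m}$, where the $2$-fold cover $\Spin_{2m}\to\SO_{2m}$ contributes an extra $\mathbb{Z}/2$. Tracking the pre-images $S_{\phi_k,sc}\subset\widehat{G}_{sc}$ and their component groups, together with $Z(\widehat{G}_{sc})^{\Gamma_F}$ (which in the even-dimensional case is sensitive to the discriminant character $\chi_{V_{n+1}}$), should deliver the identity $|\mathcal{S}_{\phi_n,sc}||\mathcal{S}_{\phi_{n+1},sc}|=8|\mathcal{S}_{\phi_n}||\mathcal{S}_{\phi_{n+1}}|$. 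The hardest part will be a careful accounting of all the factors of $2$ --- those from the square-root ambiguity in $\omega$, from the kernel of $\Spin\to\SO$, and from the Galois action on the various centers --- and confirming that they combine exactly as claimed. The explicit low-rank verifications of Conjecture \ref{C:conj} in the cases $n=2,3,4$ using \cite{Wal85}, \cite{Ich08}, and \cite{GI11} should serve as indispensable sanity checks for the formula.
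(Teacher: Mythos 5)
The paper does \emph{not} prove the first equality $2^{\beta}=4|\mathcal{S}_{\phi_n}||\mathcal{S}_{\phi_{n+1}}|$; this is stated as a genuine conjecture and is only \emph{checked} in the low-rank cases $n=2,3,4$ by matching the constant obtained from the Waldspurger, Ichino, and Gan-Ichino formulas against $|\mathcal{S}_{\phi_n}|$, $|\mathcal{S}_{\phi_{n+1}}|$ (and, there, $|\mathfrak{X}_\tau|$). Your proposed derivation of this equality by transfer from the Ichino-Ikeda $\SO$-conjecture is not what the paper does, and it is not clear it can be made to work: the reduction to $\SO$ discussed at the end of the introduction applies only when the central characters are trivial, and the relation between $\mathcal{S}_{\phi_k}$ (for $\GSpin$) and $\mathcal{S}_{\phi_k^{\SO}}$ (for $\SO$) produces powers of $2$ that do not obviously cancel to give the exact factor $4$ --- indeed from Lemma \ref{component_group_abelian_2_group} one has $\mathcal{S}_{\phi_k}\cong(\mathbb{Z}/2)^k$ on the $\GSpin$ side while the corresponding $\SO$ group is $(\mathbb{Z}/2)^{k-1}$, so the naive bookkeeping runs the wrong way. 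You should simply treat the first equality as the conjectural part and verify it in examples, as the paper does.

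For the second equality, which \emph{is} proved in the paper (as the Corollary following the second Lemma of Section 2.4), your outline is pointed in the right direction but omits the decisive ingredient and slightly misstates the source of the factor. The paper's argument rests on the explicit structure of $S_\phi$ established in Lemma \ref{component_group_abelian_2_group}: $S_\phi$ consists of block scalars $(\pm a I_{W_1},\dots,\pm a I_{W_k})$, so its preimage $S_{\phi,sc}$ in $\widehat{G}_{sc}$ (namely $\Sp_{2m}(\C)$ or $\Spin_{2m}(\C)$) is a \emph{finite} group, hence $S_{\phi,sc}^{\circ}=1$ and therefore $\widehat{Z}_{\phi,sc}=\widehat{Z}_{sc}$. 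The central extension \eqref{centralext2} then gives $|\mathcal{S}_{\phi,sc}|=|Z(\widehat{G}_{sc})|\cdot|\mathcal{S}_\phi|$, where $|Z(\Sp_{2m}(\C))|=2$ and $|Z(\Spin_{2m}(\C))|=4$ ($\mu_2\times\mu_2$ or $\mu_4$, depending on parity of $m$, but of order $4$ in either case); multiplying the two cases (one odd, one even) gives $8$, hence the $\tfrac12$ in the displayed formula. Your description attributes the even-case contribution to "an extra $\mathbb{Z}/2$ from the $2$-fold cover $\Spin_{2m}\to\SO_{2m}$", which understates the factor: what enters is the full center of $\Spin_{2m}(\C)$, of order $4$. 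Also, the Galois-invariance and the discriminant character $\chi_{V_{n+1}}$ you invoke play no role in this count, since $\widehat{Z}_{\phi,sc}$ is defined without taking $\Gamma_F$-invariants. Adding the observation that $S_{\phi,sc}^{\circ}$ is trivial would close the gap in your second step.
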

\begin{remark}
If $\pi_{n+1}$ or $\pi_n$ appear with multiplicity greater than one in the discrete spectrum, then Conjecture \ref{C:conj2} should be modified in a similar way as for special orthogonal groups (see Conjecture 6.2.1 in \cite{Xue}). 
\end{remark}

\quad

As the last thing in this introduction, let us mention that if the central characters of $\pi_{n+1}$ and $\pi_n$ are both trivial, so that $\omega_{\pi_{n+1}}=\omega_{\pi_n}=1$, then $\pi_{n+1}$ and $\pi_n$ can be seen as automorphic representations of $\SO_{n+1}(\A)$ and $\SO_n(\A)$, respectively. In this case, if one chooses $\omega=1$, one can readily see that our conjecture reduces to that of Ichino-Ikeda. Hence our conjecture should be considered as a ``generalization" of the Ichino-Ikeda conjecture rather than an analogue of it.

\quad

This paper is organized as follows.  In \S ~2, we review the general theory of GSpin and discuss Conjecture \ref{C:conj2}.  In \S ~3, we establish the convergence of the integral and then compute the integral for unramified data.  In \S ~4, we wrap-up our formulation of the conjecture, and then establish the conjecture for the $n=2,3$ and $4$ cases.

\quad

\noindent{\bf Notations}: If $\pi$ is a representation of a group $G$, we denote the space of $\pi$ by $V_{\pi}$. If $\pi$ admits a central character we write $\omega_\pi$ for the central character of $\pi$ restricted to the connected component of the center of $G$. Assume that the space $V_{\pi}$ is a space of functions or maps on the group $G$ and $\pi$ is a representation of G on $V_{\pi}$ defined by right translation (for example, when $\pi$ is an automorphic sub-representation). Let $H$ be a subgroup of $G$.  We define $\pi \|_H$ to be the representation of $H$ realized in the space
$$V_{\pi \|_H}:=\{f|_H: f\in V_{\pi}\}$$
of restrictions of $f \in V_{\pi}$ to $H$ on which $H$ acts by right translation.  Namely $\pi \|_H$ is the representation obtained by restricting the functions in $V_{\pi}$.

For a reductive group $G$ over $F$, we usually identify the group $G$ with its $F$-rational points $G(F)$. We denote by $\Gamma_F$ the absolute Galois group of $F$. If $(V, q)$ is a quadratic space over $F$, we denote its discriminant by $\disc(V)$, which is always viewed in $F^\times\slash F^{\times 2}$. We denote by $\mathbb{H}$ the hyperbolic plane over $F$, namely the unique 2-dimensional split quadratic space.

\quad

\noindent{\bf Acknowledgements}: This work stems from the author's PhD thesis. The author thanks her advisor Shuichiro Takeda for suggesting this problem, and his helpful advice and support.   The author also thanks Atsushi Ichino for helpful discussions and for directing the author to the work by Xue in \cite{Xue} and Lapid-Mao in \cite{LM15}; as well as Hang Xue for discussions regarding Conjecture 1.5 and Remark 1.6.  \\

\section{The General Spin Group} In this section, we begin by defining the general spin group over any field $F$ and  then explicitly compute the first five cases.  We then let $F$ be a number field, and define the local and global $L$-functions for $\GSpin$.  The section is concluded with the component groups for the Langlands parameters for $\GSpin$ and we discuss Conjecture \ref{C:conj2}.

\subsection{The group $\GSpin$}
In the past literature such as \cite{Asg00, AS06, AC16}, the quasi-split general spin group is often defined in terms of roots and coroots, which is useful, for example, when one computes the dual group. In this paper, however, we give an alternate definition in terms of the Clifford algebra, which can be done even for the non-quasi-split case, and which more naturally gives the inclusion $\GSpin_n\subseteq\GSpin_{n+1}$.

Let $(V,q)$ be a quadratic space with quadratic form $q$ over an arbitrary field $F$ of characteristic different from 2 with $\dim V=n$. (Of course we are interested in the case when $F$ is local or global, but in this subsection $F$ can be arbitrary.) Let $T(V)$ denote the tensor algebra of $V$, that is,
\[
T(V) := \bigoplus_{k=0}^{\infty}V^{\otimes k} =  F \oplus V \oplus (V \otimes V) \oplus (V \otimes V \otimes V) \oplus \cdots.
\]
Let $I(q)$ be the two sided ideal of $T(V)$ generated by the elements of the form
\[
v\otimes v - q(v) \cdot 1
\]
where $v\in V$, and define
\[
C(V):=C(V, q)=T(V)/I(q),
\]
which is called the Clifford algebra associated with $(V, q)$. We write $v_1 \otimes v_2 \otimes  \cdots \otimes v_k = v_1v_2 \cdots v_k$ and in particular  $v^2=q(v)$.

\begin{lemma}\label{L:orth}
Let $x$ and $y$ be orthogonal in $V$; namely $B_q(x, y)=0$, where $B_q$ is the bilinear form associated with $q$.  Then in $C(V)$,
$$x\cdot y = - y \cdot x.$$
\end{lemma}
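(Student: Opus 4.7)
The plan is to use the defining relation $v^2 = q(v)$ of the Clifford algebra applied to the sum $x+y$, together with bilinearity of the associated form $B_q$. Concretely, I would compute $(x+y)^2$ in $C(V)$ in two ways and compare.

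First, expanding in $T(V)/I(q)$ we get
\[
(x+y)^2 = x^2 + xy + yx + y^2 = q(x) + q(y) + xy + yx,
\]
since $x^2 = q(x)$ and $y^2 = q(y)$ in $C(V)$. On the other hand, the Clifford relation applied directly to $x+y \in V$ yields $(x+y)^2 = q(x+y)$. I would then invoke the identity $q(x+y) = q(x) + q(y) + B_q(x,y)$, which is how the associated bilinear form $B_q$ is defined (or is an elementary consequence of the definition). Equating the two expressions and using the hypothesis $B_q(x,y) = 0$ gives $xy + yx = 0$, i.e., $x \cdot y = - y \cdot x$.

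There is no substantive obstacle here; the only point to be careful about is the convention used for $B_q$ (whether $B_q(x,y) = q(x+y) - q(x) - q(y)$, which is twice the symmetric bilinear form obtained by polarization), but either convention produces the same conclusion because only the vanishing of $B_q(x,y)$ is used. Thus the proof is a single short computation in the quotient algebra $C(V) = T(V)/I(q)$.
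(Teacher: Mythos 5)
Your proposal is correct and follows exactly the same route as the paper: both expand $(x+y)^2$ in $C(V)$, use the Clifford relation $(x+y)^2 = q(x+y)$, and invoke $B_q(x,y)=0$ to cancel the cross terms. Your write-up is simply a slightly more explicit version of the same one-line computation.
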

\begin{proof}
By definition of $B_q$, $B_q(x,y)=0$ implies $q(x+y)-q(x)-q(y) = 0 $, namely $q(x+y)=q(x)+q(y)$.  Thus, we have
$$ (x+y)(x+y)= x \cdot x + y \cdot y. $$ The lemma follows.
\end{proof}

Let $\{x_1,x_2,\dots,x_n\}$ be an orthogonal basis of $V$.  Then each element in $C(V)$ is a linear combination of the elements of the form $x_{i_1}x_{i_2} \cdots x_{i_k}$, where $x_{i_j}\in\{x_1,x_2,\dots,x_n\}$. Furthermore, by the above lemma along with $v^2=q(v)\in F$, we can readily see that each element in $C(V)$ is a linear combination of the elements of the form $x_{i_1} \cdots x_{i_k}$ with $i_1 < i_2 < \cdots <i_k$. We then define
$$C^k(V) := \left\{ \displaystyle\sum_i c_ix_{i_1} \cdots x_{i_k} \;:\; i_1 < i_2 < \cdots <i_k\right\},$$
so that we have
$$C(V)= C^0(V) \oplus C^1(V) \oplus \cdots \oplus C^n(V).$$ Further, we define
$$C^+(V) = C^0(V) \oplus C^2(V) \oplus C^4(V) \oplus \cdots $$
and $$C^-(V) = C^1(V) \oplus C^3(V) \oplus C^5(V) \oplus \cdots ,$$
and call them the even Clifford algebra and the odd Clifford algebra, respectively, so that we have
$$C(V) = C^+(V) \oplus C^{-}(V).$$
Let us note that
\[
\dim C^{+}(V)=\dim C^{-}(V)=2^{n-1}
\]
and so $\dim C(V)=2^n$ (see, for example, \cite[Theorem 1.8 and Corollary 1.9]{Lam05}.)

With this said, the general spin group $\GSpin(V)$ associated with $(V, q)$ is defined as
\[
\GSpin(V) := \{g \in C^+(V) : g^{-1} \text{ exists and }  gVg^{-1} = V\}
\]
(see, for example, \cite[Section 3.2]{Del72}.)
We sometimes write $\GSpin_n=\GSpin(V)$ when $V$ is clear from the context.

The Clifford algebra $C(V)$ is equipped with a natural involution (which we call the canonical involution on $C(V)$) defined by
$$(v_1\dots  v_k)^* = v_k  \dots  v_1,$$
where $v_i\in V$, giving rise to the map
\[
N:C(V)\longrightarrow C(V),\quad N(x)=xx^*,
\]
for $x\in C(V)$. It is immediate that $C^+(V)$ is closed under the canonical involution thanks to Lemma \ref{L:orth}. Now if $g\in\GSpin(V)$ then we have $N(g)\in C^0(V)^\times=F^\times$, say, by \cite[Lemma 3.2, pg. 335]{Sch85}, and we obtain the group homomorphism
\[
N:\GSpin(V)\longrightarrow F^\times,
\]
which we call the spinor norm on $\GSpin(V)$.

\begin{theorem}\label{embed}
Let $(V_n, q_n) \subset (V_{n+1}, q_{n+1})$ be an inclusion of quadratic spaces, so that $q_{n+1}|_{V_n}=q_n$  Then we have the natural inclusion $\GSpin(V_n) \subset \GSpin(V_{n+1})$.
\end{theorem}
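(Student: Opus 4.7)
The plan is to realize the embedding by first constructing an injective, grading-preserving algebra map $\iota: C(V_n) \hookrightarrow C(V_{n+1})$ and then verifying that each $g \in \GSpin(V_n)$, regarded via $\iota$, satisfies the three conditions defining $\GSpin(V_{n+1})$: membership in $C^+(V_{n+1})$, invertibility, and stabilization of $V_{n+1}$ under conjugation.

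For the Clifford embedding, extend an orthogonal basis $\{x_1,\ldots,x_n\}$ of $V_n$ to an orthogonal basis $\{x_1,\ldots,x_n,x_{n+1}\}$ of $V_{n+1}$ by choosing $x_{n+1}$ in a complement orthogonal to $V_n$ (possible since $\operatorname{char} F \neq 2$). The composition $V_n \hookrightarrow V_{n+1} \hookrightarrow C(V_{n+1})$ satisfies $v\cdot v = q_{n+1}(v) = q_n(v)$ for $v \in V_n$, so by the universal property of the Clifford algebra it factors through an algebra map $\iota: C(V_n) \to C(V_{n+1})$. With respect to the chosen bases, $\iota$ sends the ordered monomial basis of $C(V_n)$ to a subset of the ordered monomial basis of $C(V_{n+1})$; hence $\iota$ is injective and preserves the even/odd grading, restricting to an injection $C^+(V_n) \hookrightarrow C^+(V_{n+1})$.

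Now fix $g \in \GSpin(V_n)$ and view it inside $C^+(V_{n+1})$. Membership in $C^+(V_{n+1})$ is immediate. Invertibility transfers because $\iota$ is an algebra homomorphism: $\iota(g)\iota(g^{-1}) = \iota(1) = 1$. The only non-trivial requirement is $gV_{n+1}g^{-1} \subseteq V_{n+1}$. Since $gV_ng^{-1} = V_n$, it suffices to show that $gx_{n+1}g^{-1} \in V_{n+1}$. Here the crucial input is Lemma \ref{L:orth}: the vector $x_{n+1}$, being orthogonal to each $x_i$ ($1 \leq i \leq n$), anticommutes with each such $x_i$ in $C(V_{n+1})$. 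Consequently every product of an even number of vectors from $\{x_1,\ldots,x_n\}$ commutes with $x_{n+1}$, so $gx_{n+1} = x_{n+1}g$, and therefore $gx_{n+1}g^{-1} = x_{n+1} \in V_{n+1}$. Combined with $gV_ng^{-1} = V_n$, this gives $gV_{n+1}g^{-1} = V_{n+1}$.

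The resulting map $\GSpin(V_n) \to \GSpin(V_{n+1})$ is a group homomorphism because $\iota$ is multiplicative, and it is injective because $\iota$ is. The main conceptual point — and the only place where anything beyond the formal transfer via $\iota$ is needed — is the parity observation that even elements of $C(V_n)$ commute (rather than anticommute) with vectors orthogonal to $V_n$; without this, conjugation by $g$ might fail to fix the extra direction $x_{n+1}$. I expect this parity step to be the only real obstacle, and it is handled directly by Lemma \ref{L:orth}.
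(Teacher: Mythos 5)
Your argument is correct and is essentially the same as the paper's: both extend an orthogonal basis of $V_n$ to one of $V_{n+1}$, and both reduce the stabilization of $V_{n+1}$ to the observation that an even product of vectors from $V_n$ commutes with $x_{n+1}$ by Lemma \ref{L:orth}. The only difference is cosmetic: you justify the inclusion $C^+(V_n)\subset C^+(V_{n+1})$ via the universal property of the Clifford algebra, a step the paper simply asserts.
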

\begin{proof}
If $V_n\subset V_{n+1}$ is an inclusion of quadratic spaces then $C^+(V_n)\subset C^+(V_{n+1})$. Now suppose $g \in C^+(V_n)$ is  such that $gV_ng^{-1} \subseteq V_n$.  Then it suffices to show that $gV_{n+1}g^{-1} \subseteq V_{n+1}$.   We can choose an orthogonal basis such that
$$V_{n+1}=\Span \{v_1,v_2,\dots,v_n,v_{n+1}\}$$ and such that $\{v_1,v_2,\dots,v_n\}$ is an orthogonal basis for $V_n$.  Then we can write $$g=\displaystyle\sum_i \alpha_i v_{i_1} \dots v_{i_k},$$ where $i_k \leq n$ and $\alpha_i\in F^{\times}$.   Since $g$ is in the even Clifford algebra there are an even number of vectors appearing and by Lemma \ref{L:orth} $gv_{n+1}=v_{n+1}g$, and the claim follows.
\end{proof}

There is a natural homomorphism $p:\GSpin(V)\to \SO(V)$ sending $g \in \GSpin(V)$ to the map $v \mapsto gvg^{-1}$, giving the short exact sequence of algebraic groups
\begin{equation}\label{sesgl}
\begin{tikzcd}
1 \arrow[r] & \GL_1 \arrow[r] & \GSpin(V) \arrow[r,"p"]  & \SO(V) \arrow[r] & 1,
\end{tikzcd}
\end{equation}
where $\ker p=\GL_1=C^0(V)^\times$ (see, for example \cite[Section 3]{Del72}). Hence if $(V, q) \subset (V', q')$ is the inclusion of quadratic spaces as in the above corollary, we have the commutative diagram
\[
\begin{tikzcd}
1\ar[r]&\GL_1\ar[r]&\GSpin(V')\ar[r]&\SO(V')\ar[r]&1\\
1\ar[r]&\GL_1\ar[r]\ar[u, symbol={=}]&\GSpin(V)\ar[r]\ar[u, symbol=\subseteq]&\SO(V)\ar[u, symbol=\subseteq]\ar[r]&1\rlap{\, ,}
\end{tikzcd}
\]
because we have the obvious equality $C^0(V)=C^0(V')$.

The following should be mentioned.
\begin{lemma} \label{center}
Assume $\dim V=n>2$ and let $Z^{\circ}$ be the identity component of the center of $\GSpin(V)$. Then $\ker p=Z^{\circ}$, where $p:\GSpin(V)\to\SO(V)$ is as above, and hence $Z^\circ=\GL_1$.
\end{lemma}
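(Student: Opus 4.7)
The plan is to establish both inclusions $\ker p\subseteq Z^\circ$ and $Z^\circ\subseteq\ker p$ separately.

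For the inclusion $\ker p\subseteq Z^\circ$, I would recall from the short exact sequence \eqref{sesgl} that $\ker p=\GL_1=C^0(V)^\times$ sits inside $\GSpin(V)$ as the subgroup of nonzero scalars of the Clifford algebra. Scalars commute with every element of $C(V)$ and in particular with every element of $\GSpin(V)$, so this copy of $\GL_1$ lies in the center of $\GSpin(V)$. Since it is isomorphic to $\GL_1$ as an algebraic group, it is connected, hence contained in $Z^\circ$.

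For the reverse inclusion $Z^\circ\subseteq\ker p$, I would use the surjectivity of $p$. If $g\in Z^\circ$, then $g$ commutes with every element of $\GSpin(V)$, so $p(g)$ commutes with every element of $p(\GSpin(V))=\SO(V)$; that is, $p(Z^\circ)\subseteq Z(\SO(V))$. Because $p$ is a morphism of algebraic groups, $p(Z^\circ)$ is a connected subgroup. The point at which the hypothesis $n>2$ enters is the standard fact that $Z(\SO(V))$ is finite for $n>2$: writing a central element as $\lambda I$ and imposing compatibility with the quadratic form forces $\lambda^2=1$, so $Z(\SO(V))=\{\pm I\}\cap \SO(V)$, which is $\{\pm I\}$ if $n$ is even and trivial if $n$ is odd. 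In either case it is a finite group scheme, and a connected subgroup of a finite group scheme is trivial. Hence $p(Z^\circ)=\{1\}$, i.e., $Z^\circ\subseteq\ker p$.

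Combining the two inclusions gives $Z^\circ=\ker p=\GL_1$, as claimed. The argument is essentially formal once one grants the finiteness of $Z(\SO(V))$ for $n>2$; that finiteness is the only spot where the dimension hypothesis is used, and it also explains the need for the convention $Z_n^\circ:=\GL_1$ in the exceptional case $n=2$, where $\GSpin_2$ is commutative and so the connected center is strictly larger than $\GL_1$.
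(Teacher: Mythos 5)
Your proof is correct and follows essentially the same two-inclusion argument as the paper: $\ker p=\GL_1$ is a connected central subgroup, hence in $Z^\circ$; conversely, $p(Z^\circ)$ is a connected subgroup of $Z(\SO(V))$, which is finite when $n>2$, forcing $p(Z^\circ)$ to be trivial. The only cosmetic difference is that you treat the two parities of $n$ uniformly via the finiteness of $Z(\SO(V))$ and explicitly invoke surjectivity of $p$, whereas the paper handles the even and odd cases separately, but the substance is identical.
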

\begin{proof}
First it is clear that $\ker p\subseteq Z^\circ$ because $\ker p=\GL_1$ is in the center of $\GSpin(V)$ and $\GL_1$ is connected. So it suffices to show $Z^\circ\subseteq\ker p$. So let $a\in Z^\circ$. Then $p(a)$ is in the center $Z_{\SO}$ of $\SO(V)$. Now if $n$ is odd, then $Z_{\SO}=1$ and hence $p(a)=1$. If $n$ is even, then $Z_{\SO}=\{\pm 1\}$. But since $\{\pm 1\}$ is disconnected and $Z^{\circ}$ is connected (as an algebraic group), we also have $p(a)=1$. So in either case, we have $a\in p^{-1}(1)=\ker p$.
\end{proof}

Let us note that if $\dim V\leq 2$ then it is well-known that the entire group $\SO(V)$ is commutative. Similarly, as we will see in the next subsection, the general spin group $\GSpin(V)$ is also commutative.

\subsection{Low rank $\GSpin$}\label{iso}
In this subsection, we will explicitly compute $\GSpin(V_n)$ when $n=\dim V_n$ is small. This is done by using that $\GSpin(V_n)$ is a subgroup of the group of similitudes
$$ \Sim_n:=\{g \in C^+(V_n):N(g) \in C^{0}(V)^{\times}=F^{\times}\}$$ (see, for example, \cite[Prop. 13.10]{KMRM98}) along with the following result.

\begin{theorem}\label{CSA}
Let $d=\disc(V_n)$ and $E=F(\sqrt{d})$. Then we have
\[
C^+(V_n)=
\begin{cases}
A&\text{if $n$ is odd};\\
A\times A&\text{if $n$ is even and $d=1$};\\
A_E&\text{if $n$ is even and $d\neq 1$},
\end{cases}
\]
where $A$ is a central simple algebra over $F$ and $A_E$ is a central simple algebra over $E$. Note that since $\dim_FC^+(V_n)=2^{n-1}$ we must have $\dim_FA=2^{n-1}$ if $n$ is odd, and $\dim_FA=\dim_E A_E=2^{n-2}$ if $n$ is even.

Also
\[
\text{the involution $*$ is}
\begin{cases}
\text{unitary}&\text{if $n\equiv 2, 6\mod{8}$};\\
\text{symplectic}&\text{if $n\equiv 3, 4, 5\mod{8}$};\\
\text{orthogonal}&\text{if $n\equiv 0, 1, 7\mod{8}$},
\end{cases}
\]
and furthermore if $n\equiv 0, 4\mod{8}$ and $C^{+}(V_n) = A \times A$ then * is of orthogonal or symplectic type on each factor of $C^+(V_n).$ 
\end{theorem}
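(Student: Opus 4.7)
The plan is to reduce to two classical facts about Clifford algebras over a field of characteristic different from $2$: the computation of the center of $C^+(V_n)$, and the periodicity modulo $8$ of the canonical involution. Both can be found in standard references such as \cite{Lam05} and \cite{KMRM98}, which I would cite directly for the bulk of the argument, while filling in the key computation that controls the center.

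For the structure as a (product of) central simple algebra(s), fix an orthogonal basis $\{e_1,\ldots,e_n\}$ of $V_n$ and set $z := e_1 e_2 \cdots e_n$. A short induction using Lemma \ref{L:orth} shows that $z^2 = (-1)^{n(n-1)/2} q(e_1) \cdots q(e_n)$, which represents a suitable twist of $\disc(V_n)$ in $F^\times/F^{\times 2}$. Moreover $z \in C^+(V_n)$ exactly when $n$ is even, and in that case $z$ commutes with every product $e_i e_j$ and hence lies in the center of $C^+(V_n)$. A dimension count together with the fact that $C^+(V_n)$ is semisimple then yields the three cases: the center is $F$ when $n$ is odd, giving $C^+(V_n) = A$ central simple over $F$; it is $F \times F$ when $n$ is even and the class of $z^2$ is trivial (i.e.\ $d = 1$), giving $C^+(V_n) = A \times A$; and it is the quadratic extension $E = F(\sqrt{d})$ otherwise, giving $C^+(V_n) = A_E$ central simple over $E$. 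The stated $F$- and $E$-dimensions follow from $\dim_F C^+(V_n) = 2^{n-1}$.

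The involution type modulo $8$ is verified by first computing the eight base cases $n = 1, 2, \ldots, 8$ directly from $(v_1 \cdots v_k)^* = v_k \cdots v_1$, and then propagating the result using the standard isomorphism $C(V_n \perp \mathbb{H} \perp \mathbb{H} \perp \mathbb{H} \perp \mathbb{H}) \cong C(V_n) \otimes_F M_{16}(F)$. Under this isomorphism the canonical involution on the left corresponds to the tensor product of the canonical involution on $C(V_n)$ with an orthogonal involution on $M_{16}(F)$, which reduces the type question to the base cases. Restricting from $C$ to $C^+$ throughout then gives the stated table for the even part.

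The main obstacle, as with most such mod-$8$ tabulations, is the case $n \equiv 0, 4 \pmod 8$ where $C^+(V_n) \cong A \times A$. Here one must check that $*$ preserves each factor rather than swapping them, which amounts to showing that $z^* = z$ so that the two central idempotents of $F[z] \cong F \times F$ are fixed, and then to compute the type on each factor by evaluating $*$ on a single generator. Tracking these signs carefully across the two relevant residues modulo $8$ is the most delicate portion of the proof, but reduces to a short finite calculation that one then records in the final clause of the statement.
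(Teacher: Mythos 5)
The paper's proof is simply a citation to \cite[Theorems 2.4, 2.5]{Lam05} and \cite[(8.4) Proposition]{KMRM98}, whereas you unpack the underlying argument: the computation of the center of $C^+(V_n)$ via the pseudoscalar $z = e_1\cdots e_n$, and the mod-$8$ periodicity of the canonical involution. Your treatment of the center is sound (the identity $z^2 = (-1)^{n(n-1)/2}q(e_1)\cdots q(e_n)$ is correct, and $z$ does commute with all of $C^+(V_n)$ when $n$ is even since $ze_i = -e_iz$; you rightly flag that whether $z^2$ represents $d$ or $-d$ is a matter of discriminant convention), and the observation $z^* = (-1)^{n(n-1)/2}z$ correctly explains why $*$ fixes the two central idempotents precisely when $n\equiv 0,1\pmod 4$, which matches the split of the table between unitary ($n\equiv 2,6$) and non-unitary ($n\equiv 0,4$) types. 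The one step I would push back on is ``Restricting from $C$ to $C^+$ throughout then gives the stated table'': under the graded tensor decomposition $C(V\perp W)\cong C(V)\,\hat\otimes\,C(W)$, the even part is $C^+(V)\otimes C^+(W)\oplus C^-(V)\otimes C^-(W)$, not $C^+(V)\otimes C(W)$, so passing from the involution on $C(V_n\perp\mathbb{H}^{\perp 4})\cong C(V_n)\otimes M_{16}(F)$ to the involution on its even part requires an extra identification (e.g.\ $C^+(\langle a\rangle\perp W)\cong C(\langle -a\rangle W)$ from \cite[Cor.~2.10]{Lam05}, as used elsewhere in the paper). If you cite \cite{KMRM98}[\S 8] for that last reduction, as the paper does, the argument is complete; as written, that sentence is the one genuinely non-trivial gap.
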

\begin{proof}
See \cite[Theorem 2.4,2.5] {Lam05} and \cite[(8.4) Proposition]{KMRM98}.
\end{proof}

Though known to the experts, using that $\GSpin_n$ is a connected subgroup of $\Sim_n$ we can easily compute $\GSpin_n=\GSpin(V_n)$ for $n=1, 2, 3, 4$ and the split case of $\GSpin_5$ by showing for these low rank cases that $\dim \GSpin_n = \dim \Sim_n$ and hence $\GSpin_n \cong \Sim_n$ as follows:
\begin{itemize}
\item [\bf n=1:]\label{n=1}  If $n=1$ then by Theorem \ref{CSA}, $C^+(V_1)$ is a central simple algebra over $F$ of dimension 1.  Hence, $C^+(V_1) = F$, and
\begin{align*}
\Sim_1& = \{g \in C^+(V_1):N(g) \in F^{\times}\}\\
&=\{ g \in F: N(g) \in F^{\times}\}\\
&=\{ g \in F: gg^* \in F^{\times}\}\\
&=\{ g \in F: g^2 \in F^{\times}\}\\
&\cong F^{\times}\\
& \cong \GSpin_1.
\end{align*}

\item [\bf n=2:] \label{n=2} If $n=2$ then by Theorem \ref{CSA} there are two cases to consider.

Case 1: Assume $d=1$, so that $V_2=\mathbb{H}$ (hyperbolic plane). Then $C^+(V_2) = A \times A$, where $A$ is a central simple algebra over $F$ with $\dim_F A = 1$, $C^+(V_2) = F \times F$.  Then by Theorem \ref{CSA} we know that the involution $*$ is given by $(a, b)^*=(b, a)$ for $(a, b)\in F\times F$. The fixed field of this involutions is $\Delta F=\{(a,a)\}$, which is equal to $C^0(V_2)$. Hence
\begin{align*}
\Sim_2& = \{(a,b) \in F \times F :(a,b)(a,b)^* \in \Delta F^{\times}\}\\
&=\{(a,b) \in F \times F:(a,b)(b,a) \in \Delta F^{\times}\}\\
&=\{(a,b) \in F \times F:(ab,ab) \in \Delta F^{\times}\}\\
&\cong F^{\times} \times F^{\times} \cong {\GL}_1 \times {\GL}_1\\
&\cong\GSpin_2.
\end{align*}

Case 2: Assume $d\neq 1$, so that $V_2=E$ and $q_2=N_{E/F}$. Then $C^+(V_2)$ is a central simple algebra over $E=$ with $\dim_F C^+(V_2)=2$, which implies  $C^+(V_2)=E$. Again by Theorem \ref{CSA} the involution $*$ is the Galois conjugate of $g$. Then
\begin{align*}
\Sim_2& = \{g \in E:g g^* \in F^{\times}\}\\
&\cong E^{\times} \cong \Res_{E/F} {\GL}_1\\
&\cong \GSpin_2.
\end{align*}

\item [\bf n=3:] \label{n=3}
If $n=3$ then $C^+(V_3)$ is a central simple algebra over $F$ and $\dim_FC^+(V_3) = 4$, which means $C^+(V_3)$ is a quaternion algebra $D$ over $F$. Moreover, by Theorem \ref{CSA}, the involution $*$ is symplectic, which implies that $*$ is the quaternion conjugation.  Then
\begin{align*}
\Sim_3& = \{ g \in C^+(V_3) :N(g)\in F^{\times}\}\\
& =\{ g \in D:g\overline{g} \in F^{\times}\}\\
&=D^{\times}\\
& \cong \GSpin_3,
\end{align*}
where the bar indicates the quaternion conjugation. In particular, if $D$ is split then $\GSpin_3=\GL_2$.

\item [\bf n=4:]\label{n=4} If $n=4$ then by Theorem \ref{CSA} there are two cases to consider.

Case 1: Assume $d=1$, so that $C^+(V_4) = A \times A$ where $A$ is a central simple algebra over $F$ with $\dim_F A = 4$. Then $C^+(V_4)=D\times D$ where $D$ is a quaternion algebra over $F$, and the involution $*$ is symplectic on each factor $D$. Hence the involution $*$ is given by
\[
(x, y)^*=(\overline{x}, \overline{y})
\]
for $(x, y)\in D$, where the bar is the quaternion conjugation as above. Hence
\begin{align*}
\Sim_4 &= \{(x,y) \in D \times D :(x,y)(\overline{x},\overline{y}) \in \Delta F^{\times}\}\\
&=\{(x,y) \in D \times D :(x\overline{x},y\overline{y}) \in \Delta F^{\times}\}\\
&=\{(x,y) \in D \times D :x\overline{x}=y\overline{y} \in F^{\times}\}\\
&=\{(x,y) \in D \times D :N_D(x) = N_D(y) \in F^{\times}\}\\
&\cong  \GSpin_4,
\end{align*}
where $N_D$ is the reduced norm on $D$.  In particular, if $D=M_2(F)$ then $$\GSpin_4 = \{(x,y) \in {\GL}_2(F) \times {\GL}_2(F): \det x = \det y \}.$$

Case 2:  Assume $d\neq 1$, so that $C^+(V_4)$ is central simple algebra over $E=F(\sqrt{d})$ and $\dim_FC^+(V_4) = 8$ so $\dim_E C^+(V_4) = 4$.  Thus, $C^+(V_4)$ is a quaternion algebra $D_E$ over $E$  and the involution * is the quaternion conjugation on $D_E$, which fixes $E$ point-wise.
Then
\begin{align*}
 \Sim_4 &= \{g\in C^+(V_5):gg^* \in F^{\times}\}\\
 &=\{g \in D_E:N_{D_E}(g) \in F^{\times}\}\\
 &\cong \GSpin_4,
\end{align*}
where $N_{D_E}$ is the reduced norm on the quaternion algebra $D_E$ and the isomorphism follows from the equal dimensions of the respective Lie algebras of $\{g \in D_E:N_{D_E}(g) \in F^{\times}\}$ and $\GSpin_4$. In particular, if $D_E = M_2(E)$, then $N_{D_E}(g)=\det g$, so $\GSpin_4 = \{g \in {\GL}_2(E) : \det g \in F^{\times}\}$.

\item [\bf n=5: ]\label{n=5}  If $n=5$ then  $C^+(V_5)$ is a central simple algebra of dimension 16 over $F$. Now, for our purposes we need only the case when $V_5$ is of the form  $$V_5 = \mathbb{H} \perp \mathbb{H} \perp \langle a \rangle$$ where $a \in F^{\times}$. Then we will show $C^+(V_5)=M_4(F)$ and $\GSpin_5=\GSp_4$. First assume $a=-1$, so that $V_5 = \mathbb{H} \perp \mathbb{H} \perp \langle -1 \rangle$. Then by \cite[Cor 2.10, pg. 112]{Lam05},
\begin{align*}
C^+(V_5)&=C^+(\langle -1 \rangle \perp (\mathbb{H} \perp \mathbb{H}))\\
&= C(\mathbb{H} \perp \mathbb{H})\\
&=M_4(F).
\end{align*}
For general $a \in F^{\times}$, we have $aV_5 = \mathbb{H} \perp \mathbb{H} \perp \langle -a \rangle$, and $C^+(aV_5)=C^+(V_5)$ by \cite[Cor 2.11, pg. 112]{Lam05}.  Hence, $C^+(V_5)=M_4(F).$

The involution $*$ on $C^+(V_5)$ is symplectic involution, which by definition means that there exists a 4-dimensional symplectic space $(V,\langle-,-\rangle)$ over $F$ with $\dim V=4$ and an isomorphism $M_4(F)\cong\End_FV$ such that the involution $*$ is the pullback of the adjoint involution on $V$ induced by the symplectic form $\langle-,-\rangle$. Hence we have
\begin{align*}
\Sim_5& = \{ g \in M_4(F):gg^* \in F^{\times}\}\\
&= \{ g\in\End_F V = gg^* \in F^{\times}\},
\end{align*}
where by $g^*$ for $g\in\End_FV$ we mean the adjoint with respect to the symplectic form on $V$. Then viewed inside $\End_FV$ we have
\begin{align*}
\langle gv, gv'\rangle&=\langle v, g^*gv'\rangle=N(g)\langle v, v'\rangle
\end{align*}
for all $v, v'\in V$, which implies $\Sim_5 \cong {\GSp}_4,$ and $N(g)$ is the similitude factor of $g\in\GSp_4$. The equality of dimensions of $\Sim_4 \cong \GSp_4$ and $\GSpin_5$ gives  $$\GSpin_5 \cong \GSp_4.$$
  \end{itemize}

\begin{remark}
Sometimes in the literature, $\GSpin_1$ is ''defined" as $\GL_1$. (See, for example, \cite[pg. 678]{Asg02}.) However, this actually ''follows from" our definition of $\GSpin$. Also the case $n=4$ is also proven in \cite[Proposition 2.1]{AC16} by using roots and coroots.)
\end{remark}

\subsection{On certain $L$-functions}
In this subsection, we review the basics of the $L$-group of $\GSpin$ and certain $L$-functions attached to a cuspidal automorphic representation of $\GSpin$. Accordingly, we let $F$ be a number field and we simply write $\GSpin_n=\GSpin(V_n)$.

First recall that the Langlands dual group $\widehat{\GSpin}_n$ is defined as
\[
\widehat{\GSpin}_n=
\begin{cases}
\GSp_{2m}(\C)&\text{if $n=2m+1$};\\
\GSO_{2m}(\C)&\text{if $n=2m$}.
\end{cases}
\]
Next assume that $\GSpin_n$ is quasi-split. Then (the Galois form of) the global $L$-group $\leftidx{^}L\GSpin_n$ is defined as
\[
\leftidx{^}L\GSpin_n=
\begin{cases}
\GSp_{2m}(\C)\times \Gamma_F&\text{if $n=2m+1$};\\
\GSO_{2m}(\C)\rtimes \Gamma_F&\text{if $n=2m$},
\end{cases}
\]
where $\Gamma_F$ is the absolute Galois group of $F$. Note that for $n=2m$ the action of $\Gamma_F$ is trivial on $\Gamma_E$ where $F=E(\sqrt{d})$ and $d=\disc(V_n)$, so that we have the natural surjection
\[
\GSO_{2m}(\C)\rtimes \Gamma_F\longrightarrow \GSO_{2m}\rtimes\Gal(E/F)\subseteq\GO_{2m}(\C),
\]
where in case $d\neq 1$ the nontrivial element in $\Gal(E/F)$ acts as in, say \cite[Section 4.3]{HS16}, so that we have the inclusion in $\GO_{2m}(\C)$. If $\GSpin_n$ is not quasi-split, there exists a unique quasi-split inner form $\GSpin_n^*$ of $\GSpin_n$, and we define
\[
\leftidx{^}L\GSpin_n=\leftidx{^}L\GSpin_n^*.
\]
Now for each place $v$ of $F$, we define the local $L$-group $\leftidx{^}L\GSpin_n(F_v)$ analogously as above by replacing $F$ by $F_v$.

Let $\pi=\otimes_v\pi_v$ be a cuspidal automorphic representation of $\GSpin_n(\A)$. Assuming the (conjectural) local Langlands correspondence for $\GSpin_n$, we have the local Langlands parameter
\[
\phi_{\pi_v}:WD_{F_v}\longrightarrow\leftidx{^}L\GSpin_n(F_v),
\]
where $WD_{F_v}$ is the Weil-Deligne group of $F_v$. Now for each homomorphism
\[
\rho:\leftidx{^}L\GSpin_n(F_v)\longrightarrow\GL_N(\C),
\]
the local $L$-factor is defined as
\[
L_v(s, \pi_v, \rho):=L_v(s, \rho\circ\pi_{\phi_v}),
\]
where the right-hand side is the local $L$-factor of Artin type associated with the $N$-dimensional Galois representation $\rho\circ\pi_{\phi_v}$. We then define the global automorphic $L$-function by
\[
L(s, \pi, \rho)=\prod_vL_v(s, \rho\circ\phi_{\pi_v}),
\]
and of course it is expected that this product converges for $\Re(s)$ sufficiently large and admits meromorphic continuation and a functional equation.

There are three cases of $\rho$ we are interested in: the standard representation, adjoint representation and tensor product representation, where the last one actually involves another $\GSpin_{n+1}$.

Firstly, we have the standard representation $\rho=std_n$, which is given by
\[
std_n:\leftidx{^}L\GSpin_n(F_v)\longrightarrow\GL_{2m}(\C),
\]
where $m$ is such that $n=2m+1$ or $n=2m$, depending on the parity of $n$. To be more precise, we have the natural maps
\[
std_n:\leftidx{^}L\GSpin_{2m+1}(F_v)=\GSp_{2m}(\C)\times \Gamma_{F_v}\to\GSp_{2m}(\C)\subseteq\GL_{2m}(\C),
\]
and
\[
std_n:\leftidx{^}L\GSpin_{2m}(F_v)\to\GSO_{2m}\rtimes\Gal(E_v\slash F_v)\subseteq\GO_{2m}(\C)\subseteq\GL_{2m}(\C),
\]
where $E_v$ is interpreted as $F_v\times F_v$ for split $v$. We have the $L$-functions $L(s, \pi, std_n)=\prod_vL(s, \pi_v, std_n)$ called the standard $L$-function, which we simply write $L(s, \pi)$.

Secondly, we need to consider the adjoint representation. Note that we have the adjoint representations
\[
\Ad_{\mathfrak{gsp}}:\leftidx{^}L\GSpin_{2m+1}(F_v)\to \GSp_{2m}(\mathbb{C})\to\GL(\mathfrak{gsp}_{2m}(\C))
\]
and
\[
\Ad_{\mathfrak{gso}}:\leftidx{^}L\GSpin_{2m}(F_v)\to\GO_{2m}(\C)\to \GL(\mathfrak{gso}_{2m}(\C)),
\]
where $\mathfrak{gsp}_{2m}(\C)$ and $\mathfrak{gso}_{2m}(\C)$ are the Lie algebras of the corresponding groups as usual. Since
\[
\mathfrak{gsp}_{2m}(\mathbb{C})=\mathfrak{sp}_{2n}(\mathbb{C}) \oplus \mathbb{C}\qand
\mathfrak{gso}_{2m}(\mathbb{C})=\mathfrak{so}_{2n}(\mathbb{C}) \oplus \mathbb{C},
\]
we have
\[
\Ad_{\mathfrak{gsp}}=\Ad_{\mathfrak{sp}} \oplus \mathds{1}\qand
\Ad_{\mathfrak{gso}}=\Ad_{\mathfrak{so}} \oplus \mathds{1}.
\]
Accordingly, we have
\[
L_v(s,\pi_v,\Ad_{\mathfrak{gsp}})=L_v(s,\pi_v,\Ad_{\mathfrak{sp}})\zeta_v(s)\qand
L_v(s,\pi_v,\Ad_{\mathfrak{gso}})=L_v(s,\pi_v,\Ad_{\mathfrak{so}})\zeta_v(s),
\]
which, of course, gives
\[
L(s,\pi,\Ad_{\mathfrak{gsp}})=L(s,\pi,\Ad_{\mathfrak{sp}})\zeta(s)\qand
L(s,\pi,\Ad_{\mathfrak{gso}})=L(s,\pi,\Ad_{\mathfrak{so}})\zeta(s),
\]
by taking the product over all $v$. Then we define
\[
L(s,\pi,\Ad):=\begin{cases}L(s,\pi,\Ad_{\mathfrak{sp}})&\text{if $n=2m+1$};\\
L(s,\pi,\Ad_{\mathfrak{so}})&\text{if $n=2m$},
\end{cases}
\]
and call it the adjoint $L$-function of $\pi$.  The adjoint $L$-function $L(s,\pi,\Ad)$ is conjecturally non-zero and holomorphic as in \cite[(3.2) and pg. 483]{LM15}.

Thirdly, we consider the tensor product representation. For this we also consider $\GSpin_{n+1}$ and the standard representation $std_{n+1}$. Then we have the tensor product $std_n\otimes std_{n+1}$ of the two standard representations, which is of the form
\[
std_n\otimes std_{n+1}:\leftidx{^}L\GSpin_n(F_v)\times \leftidx{^}L\GSpin_{n+1}(F_v)\longrightarrow
\begin{cases}
\GL_{n^2}(\C), &\text{if $n=2m$};\\
\GL_{(n-1)n}(\C),&\text{if $n=2m+1$}.
\end{cases}
\]
Then for cuspidal automorphic representations $\pi_n$ and $\pi_{n+1}$ of $\GSpin_n(\A)$ and $\GSpin_{n+1}$, respectively, we write
\[
L(s, \pi_n\times\pi_{n+1})=L(s, \pi_n\boxtimes\pi_{n+1}, std_n\otimes std_{n+1})
\]
and call it the tensor product $L$-function.
Let us mention that the $L$-function $L(s, \pi_n\times\pi_{n+1})$ is conjecturally holomorphic, see for example \cite[Theorem 5.1] {PSR87}.
\subsection{Component groups for Langlands parameters}
The goal of this section is to discuss the component groups for the (conjectural) global Langlands parameters in relation to Conjecture \ref{C:conj2}. As we did in the introduction, let $\mathcal{L}_F$ be the hypothetical global Langlands group of our number field $F$ and let $\phi: \mathcal{L}_F \rightarrow \leftidx{^}LG = \widehat{G} \rtimes \Gamma_F$ be a global Langlands parameter. Set $S_{\phi}=\Cent(\Im(\phi),\widehat{G})$, and define
$$\mathcal{S}_{\phi}=S_{\phi}/S_{\phi}^0Z(\widehat{G})^{\Gamma_F},$$
where $S^0_{\phi}$ is the identity component of the complex reductive group $S_{\phi}$, $Z(\widehat{G})$ is the center of $\widehat {G}$ and $Z(\widehat{G})^{\Gamma_F}$ is the subgroup of invariants in $Z(\widehat{G})$ under the natural action of $\Gamma_F$.

Now, let $\pi_n$ and $\pi_{n+1}$ be cuspidal automorphic representations of $\GSpin_n(\A)$ and $\GSpin_{n+1}(\A)$, respectively. We then have made the following conjecture  (Conjecture \ref{C:conj2})
$$2^{\beta}=4|\mathcal{S}_{\phi_n}||\mathcal{S}_{\phi_{n+1}}|,$$
where $\phi_n$ and $\phi_{n+1}$ are the (conjectural) global Langlands parameters of $\pi_n$ and $\pi_{n+1}$, respectively.

\begin{lemma}\label{component_group_abelian_2_group}
The group $\mathcal{S}_{\phi_n}$ is an elementary abelian $2$-group, so in particular its order is a power of $2$.
\end{lemma}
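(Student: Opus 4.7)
The plan is to carry out the standard centralizer computation for Langlands parameters valued in a classical similitude group. Recall that $\widehat{\GSpin}_n = \GSp_{2m}(\C)$ when $n = 2m+1$ and $\widehat{\GSpin}_n = \GSO_{2m}(\C)$ when $n = 2m$. Composition of $\phi_n$ with the standard representation gives a $2m$-dimensional complex representation $r := \operatorname{std}_n \circ \phi_n$ of $\mathcal{L}_F$ which preserves a nondegenerate bilinear form $B$ up to the similitude scalar $\nu \circ \phi_n$, where $\nu$ is the similitude character on $\widehat{\GSpin}_n$; the form $B$ is symplectic when $n$ is odd and orthogonal when $n$ is even.

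First I would decompose $r$ into isotypic pieces and use the (conjectural) fact that $\phi_n$, being the parameter of a cuspidal representation appearing with multiplicity one in the discrete spectrum, is a discrete parameter, so that $r$ is multiplicity-free: $r \cong \bigoplus_i r_i$ with pairwise inequivalent irreducible $r_i$. Next, organize the $r_i$ according to the twisted-duality relation $r_i^\vee \cong r_j \otimes (\nu \circ \phi_n)^{-1}$. Each $r_i$ is then either self-matched, in which case it inherits an invariant bilinear pairing of orthogonal or symplectic type compatible with $B$, or else matched with a distinct $r_j$, forming a hyperbolic pair in $r$.

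By Schur's lemma, any element of $S_{\phi_n} = \Cent(\Im(\phi_n), \widehat{\GSpin}_n)$ acts on $r_i$ by a scalar $c_i \in \C^\times$, subject to the similitude constraint that $B$ is preserved up to a global scalar $\lambda \in \C^\times$. On a self-matched block this forces $c_i^2 = \lambda$, yielding exactly two choices once $\lambda$ is fixed, while on a hyperbolic pair $(r_i, r_j)$ the constraint is $c_i c_j = \lambda$, sweeping out a connected $\C^\times$. It follows that $S_{\phi_n}$ is a union of $2^s$ cosets of a connected torus, where $s$ is the number of self-matched irreducible constituents, and hence $S_{\phi_n}/S_{\phi_n}^0 \cong (\mathbb{Z}/2\mathbb{Z})^s$ is an elementary abelian $2$-group. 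The lemma follows at once, since $\mathcal{S}_{\phi_n}$ is a quotient of $S_{\phi_n}/S_{\phi_n}^0$ by the image of $Z(\widehat{G})^{\Gamma_F}$, and any quotient of an elementary abelian $2$-group has the same property.

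The main obstacle is the even case $n = 2m$, where $\leftidx{^}L\GSpin_n = \GSO_{2m}(\C) \rtimes \Gamma_F$ has a possibly nontrivial outer Galois action that can identify two irreducible constituents of $r$ under a Galois twist. One has to verify that after intersecting the $\GL_{2m}$-centralizer with $\GSO_{2m}(\C)$ (rather than $\GO_{2m}(\C)$) and accounting for $\Gamma_F$-orbits of blocks, the component group remains $2$-torsion; this amounts to standard bookkeeping but deserves care, as it is the only point where the argument differs from the odd case.
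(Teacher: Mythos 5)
Your argument is correct and, at its core, proceeds the same way as the paper's: compose $\phi_n$ with the standard representation, decompose into irreducibles, apply Schur's lemma, and read off the component group from the similitude constraint. The differences are in bookkeeping. The paper uses the hypothesis that $\Im(\phi_n)$ is not contained in a proper parabolic to show every irreducible constituent $W_1$ satisfies $W_1 \cap W_1^\perp = 0$, so that the form restricts nondegenerately to each block and hyperbolic pairs simply never occur; multiplicity-freeness is then used implicitly when the centralizer is written as scalars on each block. You make multiplicity-freeness explicit and, rather than ruling out hyperbolic pairs at the outset, allow them a priori and observe that they contribute only a connected torus to $S_{\phi_n}$ and hence nothing to $S_{\phi_n}/S_{\phi_n}^0$ --- a slightly more general way of organizing the same computation, which still relies on the discreteness hypothesis but only through multiplicity-freeness. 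You are also right to flag $n = 2m$ as the delicate case: the paper disposes of it by asserting one may ``argue as before'' after viewing $\phi_n$ as landing in $\GO_{2m}(\C)$, and does not directly address that $S_{\phi_n}$ is by definition the centralizer in $\widehat{G} = \GSO_{2m}(\C)$ rather than $\GO_{2m}(\C)$, nor the effect of the $\Gamma_F$-action permuting blocks; so your proposal and the paper are on equal footing there, with neither carrying out that verification in detail.
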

\begin{proof}
First consider the case $n=2m+1$, so that the (conjectural) global Langlands parameter $\phi_n$ is of the form
$$\phi_n: \mathcal{L}_F \longrightarrow \leftidx{^}L{\GSpin}_{2m+1}={\GSp}_{2m}(\mathbb{C}) \times \Gamma_F$$ such that $\Im(\phi_{n})$ is not in a proper parabolic subgroup of $\GSp_{2m}(\mathbb{C})$. Since the action of $\Gamma_F$ on ${\GSp}_{2m}(\mathbb{C})$ is trivial, we may consider $\phi_n$ as the composite
\[
\phi_n: \mathcal{L}_F \longrightarrow {\GSp}_{2m}(\mathbb{C}) \times \Gamma_F\longrightarrow \GSp_{2m}(\C),
\]
where the second map is the obvious projection. Furthermore, by writing $\GSp_{2m}(\C)=\GSp(W)$, where $(W,\langle-,-\rangle)$ is an $m$-dimensional symplectic space over $F$, we view $\phi_n$ as a representation of $\mathcal{L}_F$ acting on $W$.

Now, let $W_1 \subseteq W$ be an irreducible subspace of $\phi_n$, and $W_1^{\perp}$ the orthogonal complement of $W_1$ with respect to the symplectic form $\langle-,-\rangle$. One can readily see that $W_1^{\perp}$ is a subrepresentation of $\phi_n$.

Assume $W_1 \cap W_1^{\perp}\neq 0$. Then $W_1 \cap W_1^{\perp}$ is a nonzero subrepresentation of $W_1$, which, by irreducibility of $W_1$, implies that $W_1 \cap W_1^{\perp}=W_1\subseteq W_1^\perp$. Hence $W_1$ is totally isotropic, and so the group $\Im(\phi_m)$ stabilizes the flag $0 \subseteq W_1 \subseteq W$ which implies $\Im(\phi_n)$ is in the proper parabolic subgroup $P(W_1) \subseteq \GSp(W)$, which contradicts to our assumption that $\Im(\phi_n)$ is not in a proper parabolic subgroup of $\GSp(W)$.

Thus we necessarily have $W_1 \cap W_1^{\perp}=0$, so that $W = W_1 \oplus W_1^{\perp}$. By induction
$$W= W_1 \oplus W_2 \oplus \cdots \oplus W_k$$
where each $(W_i,\langle \cdot, \cdot, \rangle)$ is a smaller symplectic space.  Then $$\Im(\phi_n) \subseteq (\GSp(W_1) \times \cdots \times \GSp(W_k)) \cap \GSp(W),$$
which forces $$\Im(\phi_n) \subseteq \{(g_1,\cdots,g_k) \in \GSp(W_1) \times \cdots \times \GSp(W_k):N_1(g_1)= \dots = N_k(g_k) \},$$
where each $N_i$ is the similitude character on $\GSp(W_i)$.
Then we see that
\begin{align}\label{S_phi_n}
S_{\phi_n}&=\Cent(\Im(\phi_{\pi}),{\GSp}_{2m}(\mathbb{C}))\\\notag
&=\{(a_1I_{W_1},\dots, a_kI_{W_k}):a_1^2=\dots =a_k^2 \}\\\notag
&=\{(\pm aI_{W_1},\cdots, \pm aI_{W_k}): a\in \mathbb{C}^{\times}\}\notag
\end{align}
where $I_{W_i}$ is the identity on $W_i$. The identity component is then
$$S_{\phi_n}^{\circ}=\{( aI_{W_1},\cdots,  aI_{W_k}): a\in \mathbb{C}^{\times}\}=Z(\GSp_{2m}(\mathbb{C})).$$
Hence we have $$S_{\phi_n}/S_{\phi_n}^{\circ}Z(\hat{G})^{\Gamma}=\{ (\pm 1, \dots,\pm 1) \}.$$
Finally, we have that
\begin{align*}
\mathcal{S}_{\phi_n}&=\{ (\pm 1, \dots,\pm 1) \}\\
&= (\mathbb{Z}/2\mathbb{Z})^{k},
\end{align*}
which finishes the proof for $n=2m+1$.

Next assume $n=2m$. Then the Langlands parameter is of the form
\[
\phi_n: \mathcal{L}_F \longrightarrow \leftidx{^}L{\GSpin}_{2m+1}={\GSO}_{2m}(\mathbb{C}) \rtimes \Gamma_F.
\]
Note that $\Gamma_F$ acts trivially on the center $Z(\GSO_{2m})$. Furthermore, as we have seen in the previous subsection, the action of $\Gamma_F$ is trivial on $\Gamma_E$, where $E=F(\sqrt{d})$, and hence we may consider $\phi_n$ as a map
\[
\phi_n: \mathcal{L}_F \longrightarrow{\GSO}_{2m}(\mathbb{C}) \rtimes \Gal(E/F)=\GO_{2m}(\C).
\]
Again by writing $\GO_{2m}(\C)=\GO(W)$ for a complex symmetric bilinear space $W$, we can consider $\phi_n$ as a representation of $\mathcal{L}_F$ acting on $W$. Then we can argue as before.
\end{proof}

Next, let us set up some general notation. Let $G$ be a reductive group over $F$. Set $\widehat{G}_{der}$ to be the derived group of $\widehat{G}$ and $\widehat{G}_{sc}$ the simply connected cover of $\widehat{G}_{der}$, so that we have the maps
\begin{equation}\label{simply_connected_G}
\widehat{G}_{sc}\twoheadrightarrow\widehat{G}_{der}\subseteq \widehat{G} \rightarrow \widehat{G}/Z(\widehat{G})^{\Gamma}.
\end{equation}
For each global Langlands parameter $\phi:\mathcal{L}_F\to\widehat{G}\rtimes\Gamma_F$, we set $S_{\phi, sc}\subseteq \widehat{G}_{sc}$ to be the full preimage of $S_{\phi}\subseteq \widehat{G}$ under the map $\widehat{G}_{sc}\to \widehat{G}$ as above. We then define the larger component group by
$$\mathcal{S}_{\phi,sc}:=S_{\phi,sc}/S^0_{\phi,sc},$$ which is a central extension of
$\mathcal{S}_{\phi}$ by
\begin{equation}\label{E:Z_hat}
\widehat{Z}_{\phi,sc}=\widehat{Z}_{sc}/\widehat{Z}_{sc} \cap S_{\phi,sc}^{\circ},
\end{equation}
where $\widehat{Z}_{sc}$ is the center of $\widehat{G}_{sc}$ and $S_{\phi,sc}$ is the full inverse image of $S_{\phi}$ in $\widehat{G}_{sc}$; namely we have the short exact sequence
\begin{equation}\label{centralext2}
\begin{tikzcd}
1 \arrow[r] & \widehat{Z}_{\phi,sc} \arrow[r] & \mathcal{S}_{\phi,sc} \arrow[r]  & \mathcal{S}_{\phi} \arrow[r] & 1.
\end{tikzcd}
\end{equation}
(See, for example, \cite[(9,2.2)]{Art13}.) It should be noted that this immediately implies
\begin{equation}\label{order_of_component_group}
|\mathcal{S}_{\phi,sc}|=|\widehat{Z}_{\phi,sc}||\mathcal{S}_{\phi}|.
\end{equation}

Let us note that if $G=\GSpin_{2m+1}$ then $$\widehat{G}={\GSp}_{2m}(\mathbb{C}), \hspace{.25 in}\widehat{G}_{der}={\Sp}_{2m}(\mathbb{C}), \hspace{.25 in} \widehat{G}_{sc} ={\Sp}_{2m}(\mathbb{C}),$$
and if $G=\GSpin_{2m}$ then $$\widehat{G}={\GSO}_{2m}(\mathbb{C}),\hspace {.25 in} \widehat{G}_{der}={\SO}_{2m}(\mathbb{C}), \hspace {.25 in} \widehat{G}_{sc} ={\Spin}_{2m}(\mathbb{C}).$$

With this said, we first have the following.
\begin{lemma}
Assume $G=\GSpin_n$. In the above notation, we have
\[
|\mathcal{S}_{\phi, sc}|=\begin{cases} 2|\mathcal{S}_{\phi}|&   \text{ if } n=2m+1 \\
4|\mathcal{S}_{\phi}|& \text{ if } n=2m\\
\end{cases}
\]
where $\phi$ is, of course, a hypothetical global Langlands parameter for $\GSpin_n$.
\end{lemma}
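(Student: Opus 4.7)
The strategy is to invoke (\ref{order_of_component_group}), which gives $|\mathcal{S}_{\phi,sc}| = |\widehat{Z}_{\phi,sc}| \cdot |\mathcal{S}_\phi|$, so the claim reduces to showing $|\widehat{Z}_{\phi,sc}| = 2$ when $n = 2m+1$ and $|\widehat{Z}_{\phi,sc}| = 4$ when $n = 2m$. By the definition in (\ref{E:Z_hat}), I would separately compute $|\widehat{Z}_{sc}|$ and show that $\widehat{Z}_{sc} \cap S^\circ_{\phi,sc}$ is trivial. For the first, when $n = 2m+1$ we have $\widehat{G}_{sc} = \Sp_{2m}(\mathbb{C})$ with center $\{\pm I\}$, giving $|\widehat{Z}_{sc}| = 2$; when $n = 2m$ we have $\widehat{G}_{sc} = \Spin_{2m}(\mathbb{C})$, which is a degree-$2$ cover of $\SO_{2m}(\mathbb{C})$, and its center is the preimage of $Z(\SO_{2m}(\mathbb{C})) = \{\pm I\}$, hence has order $4$.

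For the triviality of the intersection, I will in fact prove the stronger statement $S^\circ_{\phi,sc} = \{1\}$. The map $\widehat{G}_{sc} \to \widehat{G}$ factors as $\widehat{G}_{sc} \twoheadrightarrow \widehat{G}_{der} \hookrightarrow \widehat{G}$, where the first arrow is a finite cover; thus $S_{\phi,sc}$ is the preimage of $S_\phi \cap \widehat{G}_{der}$ under a finite cover, and it suffices to show that $S_\phi \cap \widehat{G}_{der}$ is finite. Using the explicit description of $S_\phi$ from the proof of Lemma~\ref{component_group_abelian_2_group} (and its direct analogue for $n = 2m$ sketched there), any element of $S_\phi$ is of the form $(\epsilon_1 a I_{W_1}, \ldots, \epsilon_k a I_{W_k})$ with $\epsilon_i \in \{\pm 1\}$, $a \in \mathbb{C}^\times$, and common similitude factor $a^2$. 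Intersecting with $\widehat{G}_{der}$ (i.e., $\Sp_{2m}(\mathbb{C})$ or $\SO_{2m}(\mathbb{C})$) forces similitude $1$, hence $a \in \{\pm 1\}$, so $S_\phi \cap \widehat{G}_{der} \subseteq \{\pm 1\}^k$ is a finite $2$-group. Therefore $S^\circ_{\phi,sc} = \{1\}$, and together with the first step this gives $\widehat{Z}_{\phi,sc} = \widehat{Z}_{sc}$ with the stated orders.

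The main obstacle is the $n = 2m$ case, where the analogue of Lemma~\ref{component_group_abelian_2_group} needs to be carried out carefully: one must decompose the symmetric bilinear space $W$ into pairwise orthogonal irreducible subrepresentations and confirm that the centralizer in $\GO_{2m}(\mathbb{C})$ (taking due account of the orthogonal-versus-symplectic types of the individual $W_i$ and of the constraint of landing in $\GSO_{2m}(\mathbb{C})$) has the same shape as in the symplectic case. Once this is in place, the remainder of the argument, including the computation of $|Z(\Spin_{2m}(\mathbb{C}))|$, is routine.
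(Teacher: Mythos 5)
Your proposal is correct and follows essentially the same route as the paper: both reduce to computing $|\widehat{Z}_{\phi,sc}|$ via \eqref{order_of_component_group}, compute $|Z(\Sp_{2m}(\C))|=2$ and $|Z(\Spin_{2m}(\C))|=4$, and show $S^\circ_{\phi,sc}=1$ from the explicit shape of $S_\phi$ in \eqref{S_phi_n}. Your explanation that the similitude constraint forces $S_\phi\cap\widehat{G}_{der}$ to lie in $\{\pm1\}^k$ is a slightly more careful way of seeing the finiteness of $S_{\phi,sc}$ than the paper's terse assertion, and your caveat about needing to redo the $n=2m$ analogue of \eqref{S_phi_n} is fair --- the paper also leaves that analogue to the reader.
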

\begin{proof}
Assume $n=2m+1$. Then \eqref{simply_connected_G} is written as
\[
\Sp_{2m}(\C)\twoheadrightarrow\Sp_{2m}(\C)\subseteq\GSp_{2m}(\C).
\]
From \eqref{S_phi_n}, one can see that $S_{\phi,sc}=S_{\phi}\cap\Sp_{2m}(\C)=\{ \pm 1, \pm1,\dots,\pm 1\}$. So $S_{\phi,sc}^{\circ}=1$.  Hence, we can see
\[
\widehat{Z}_{sc}=Z(\Sp_{2m}(\C))=\mu_2\qand \widehat{Z}_{sc} \cap S_{\phi,sc}^{\circ}=1.
\]
Thus \eqref{E:Z_hat} and \eqref{order_of_component_group} prove the lemma.

Assume $n=2m$. Then \eqref{simply_connected_G} is written as
\[
\Spin_{2m}(\C)\twoheadrightarrow\SO_{2m}(\C)\subseteq\GSO_{2m}(\C).
\]
Hence from \eqref{S_phi_n} one can see
\[
S_{\phi}\cap\SO_{2m}(\C)=\{ \pm 1, \pm 1,\dots,\pm 1\},
\]
which gives
\[
S_{\phi,sc}^{\circ}=1.
\]
Thus, since $Z(\Spin_{2m}(\C))=\mu_2\times\mu_2$ or $\mu_4$, we have the lemma from \eqref{E:Z_hat} and \eqref{order_of_component_group}.
\end{proof}

This lemma immediately implies the following, which is a part of Conjecture \ref{C:conj2} in the introduction.
\begin{corollary}
For the (conjectural) global Langlands parameters $\phi_n$ and $\phi_{n+1}$ of $\pi_n$ and $\pi_{n+1}$, respectively, we have
$$4|\mathcal{S}_{\phi_n}||\mathcal{S}_{\phi_{n+1}}|=\dfrac{1}{2}|\mathcal{S}_{\phi_n,sc}||\mathcal{S}_{\phi_{n+1},sc}|.$$
\end{corollary}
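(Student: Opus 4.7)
The plan is to deduce the corollary directly from the lemma immediately preceding it, by a two-case analysis on the parity of $n$. Since one of $n$ and $n+1$ is odd and the other is even, in either case the product $|\mathcal{S}_{\phi_n,sc}||\mathcal{S}_{\phi_{n+1},sc}|$ will pick up one factor of $2$ (from the odd-rank factor, where $\widehat{G}_{sc}=\Sp_{2m}(\C)$ has center $\mu_2$) and one factor of $4$ (from the even-rank factor, where $\widehat{G}_{sc}=\Spin_{2m}(\C)$ has center of order $4$).

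First, I would record the lemma's output in a unified form: for a Langlands parameter $\phi$ into $\leftidx{^}L\GSpin_k$, we have $|\mathcal{S}_{\phi,sc}|=2|\mathcal{S}_\phi|$ when $k$ is odd and $|\mathcal{S}_{\phi,sc}|=4|\mathcal{S}_\phi|$ when $k$ is even. Next, I would split into two cases:

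\emph{Case 1:} $n=2m+1$ is odd, so $n+1=2(m+1)$ is even. Then by the lemma,
\[
|\mathcal{S}_{\phi_n,sc}||\mathcal{S}_{\phi_{n+1},sc}| \;=\; 2|\mathcal{S}_{\phi_n}| \cdot 4|\mathcal{S}_{\phi_{n+1}}| \;=\; 8|\mathcal{S}_{\phi_n}||\mathcal{S}_{\phi_{n+1}}|.
\]
\emph{Case 2:} $n=2m$ is even, so $n+1=2m+1$ is odd. Then by the lemma,
\[
|\mathcal{S}_{\phi_n,sc}||\mathcal{S}_{\phi_{n+1},sc}| \;=\; 4|\mathcal{S}_{\phi_n}| \cdot 2|\mathcal{S}_{\phi_{n+1}}| \;=\; 8|\mathcal{S}_{\phi_n}||\mathcal{S}_{\phi_{n+1}}|.
\]

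In both cases, dividing by $2$ yields $\tfrac12|\mathcal{S}_{\phi_n,sc}||\mathcal{S}_{\phi_{n+1},sc}|=4|\mathcal{S}_{\phi_n}||\mathcal{S}_{\phi_{n+1}}|$, which is the claim. There is essentially no obstacle: the entire content is the multiplicativity of the statement of the preceding lemma across the two factors, together with the arithmetic observation that the orders $2$ and $4$ always appear as a pair since exactly one of $n$, $n+1$ is even. The only thing worth flagging explicitly in the write-up is that the product $2\cdot 4=8$ is independent of which of $n,n+1$ is the even one, so no separate bookkeeping of which $\GSpin_k$ contributes the $\mu_2$ versus the $\mu_2\times\mu_2$ (or $\mu_4$) is needed.
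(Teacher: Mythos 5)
Your proposal is correct and is exactly the argument the paper intends: the corollary is stated as following "immediately" from the preceding lemma, and your two-case bookkeeping showing that the product always picks up one factor of $2$ (odd side) and one factor of $4$ (even side), giving $8 = 2\cdot 4$, is precisely that immediate deduction spelled out.
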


\section{Local Integrals of Matrix Coefficients}
In this section, we take up the local intertwining map $\alpha_{\omega_v}^\natural$ in \eqref{E:local_integral_alpha} and firstly prove that the integral that defines $\alpha_{\omega_v}^\natural$ converges at every $v$, assuming the representations are tempered, and then secondly compute the integral for unramified data, which essentially follows from \cite{II10}.

In this section, everything is purely local, and hence we suppress the subscript $_v$ from our notation, and in particular $F$ will denote a local field.

\subsection{Some general lemmas}
In this first subsection, we will prove a couple of lemmas in harmonic analysis, which apply to any connected reductive group over local $F$. (Though those two lemma might be known to experts, we will give our proofs here because we are not able to locate them in the literature.) Accordingly, in this subsection we let $G$ be any connected reductive group over $F$.

First, as usual, we define
\[
G^{1} := \bigcap_{\chi \in \Rat(G)} \ker|\chi|_F
\]
where $\Rat(G)$ is the set of all rational characters on $G$. Then we have the following.
\begin{lemma}\label{conv}
Let $G$ be a reductive group and let $Z^{\circ}$ be the identity component of the center of $G$. Let $f: G \rightarrow \mathbb{C}$ be a measurable function such that $f(zg)=f(g)$ for all $z\in Z^\circ$ and $g\in G$. Then
$\displaystyle\int_{Z^{\circ}\backslash G} f(g)dg$ converges absolutely if and only if $\displaystyle\int_{G^1} f(g)dg$ converges absolutely.
\end{lemma}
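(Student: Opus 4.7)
My plan is to exhibit $G^1$ as a ``covering'' of $Z^{\circ}\backslash G$ with compact fiber and finite-index image, and then pull the question of absolute integrability through this covering. Two structural inputs are needed. First, $K := Z^{\circ}\cap G^1$ is a compact subgroup: since $Z^{\circ}$ is a central $F$-torus, the conditions $|\chi(z)|_F = 1$ for $\chi\in\Rat(G)$ cut out a subgroup of $Z^{\circ}(F)$ that sits inside the maximal compact subgroup of the torus $Z^{\circ}(F)$, because the restriction map $\Rat(G)\to\Rat(Z^{\circ})$ has finite-index image. Second, $Z^{\circ}G^1$ has finite index in $G$: the quotient $G/G^1$ is finitely generated free abelian of rank equal to the rank of $\Rat(G)$, and the image of $Z^{\circ}$ spans the same rank.

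Granting these, fix representatives $g_1=1, g_2,\ldots, g_N$ for the finite coset space $G/Z^{\circ}G^1$. The canonical isomorphism $Z^{\circ}\backslash Z^{\circ}G^1 \cong K\backslash G^1$ together with right translation by $g_i$ yields the decomposition
\[
\int_{Z^{\circ}\backslash G}|f(g)|\,dg \;=\; \frac{1}{\operatorname{vol}(K)}\sum_{i=1}^{N}\int_{G^1}|f(hg_i)|\,dh.
\]
Taking the $g_1=1$ summand, the implication ``LHS finite $\Rightarrow$ RHS finite'' is immediate.

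For the reverse implication I would invoke the normality of $G^1$ in $G$ (it is the common kernel of $g\mapsto|\chi(g)|_F$ for $\chi\in\Rat(G)$) together with the unimodularity of $G$ (reductivity): conjugation $h\mapsto g_i h g_i^{-1}$ is a measure-preserving automorphism of $G^1$, so $\int_{G^1}|f(hg_i)|\,dh = \int_{G^1}|f(g_i h)|\,dh$. Combining this symmetry with the $Z^{\circ}$-invariance of $f$ and the coset structure of $Z^{\circ}G^1$ inside $G$, each translated integral can be reduced to the base integral over $G^1$, which gives the desired equivalence.

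The main obstacle is precisely this reverse direction. In the archimedean setting one always has $G = Z^{\circ}G^1$, so the sum collapses to a single term and the lemma is transparent. In the non-archimedean setting, however, $[G:Z^{\circ}G^1]$ can be genuinely greater than one (e.g.\ equal to $2$ for $\GSpin_n$ over a $p$-adic field), so all $N$ translates must be controlled simultaneously; the careful interplay of normality of $G^1$, unimodularity of $G$, and $Z^{\circ}$-invariance of $f$ is what propagates finiteness from the base coset to all translates.
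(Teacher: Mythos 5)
Your proposal follows the same coset decomposition as the paper's proof: $Z^{\circ}\backslash G$ is partitioned into finitely many right translates of $K\backslash G^{1}$ with $K=Z^{\circ}\cap G^{1}$ compact, indexed by the finite group $G/Z^{\circ}G^{1}$, and the direction $\int_{Z^{\circ}\backslash G}|f|<\infty\Rightarrow\int_{G^{1}}|f|<\infty$ is immediate from the $g_{1}=1$ summand. The reverse direction --- the one actually needed in Proposition~\ref{P:convergence_local_period} --- is where both your argument and the paper's break down. You correctly identify it as the obstacle, but the proposed resolution does not close the gap: normality of $G^{1}$ together with unimodularity of $G$ lets you rewrite $\int_{G^{1}}|f(hg_{i})|\,dh$ as $\int_{G^{1}}|f(g_{i}h)|\,dh$, but this is a \emph{left} translate, and $f$ is only assumed left $Z^{\circ}$-invariant, not left $g_{i}$-invariant for $g_{i}\notin Z^{\circ}G^{1}$; the translate does not collapse to $\int_{G^{1}}|f(h)|\,dh$. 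In fact the lemma as stated is false for general measurable $f$. Take $G=\GL_{2}(\mathbb{Q}_{p})$, $Z^{\circ}$ the scalars, and let $f(g)$ be the indicator of $\{g : v_{p}(\det g)\text{ odd}\}$; then $f$ is $Z^{\circ}$-invariant since $\det(zg)=z^{2}\det g$, and $\int_{G^{1}}|f|=0$, yet $\int_{Z^{\circ}\backslash G}|f|$ is the volume of a full open coset of the index-two subgroup $Z^{\circ}G^{1}/Z^{\circ}$ in $\PGL_{2}(\mathbb{Q}_{p})$, hence infinite. The paper passes over this step with an unexplained ``Hence''; you at least flag the difficulty, which is an improvement, but your fix does not work. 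What actually makes the application go through is not the lemma for arbitrary measurable $f$ but the fact that the matrix-coefficient bounds in Proposition~\ref{P:convergence_local_period} (in terms of $\Xi$ and $\sigma$) are uniform and bi-$K$-invariant on all of $G$, hence control every coset of $Z^{\circ}G^{1}$, not just the trivial one; the cleaner route is to run the Cartan-decomposition estimate directly over $Z^{\circ}\backslash G$ rather than reducing formally to $G^{1}$.
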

\begin{proof}
Set $Z^1=Z^{\circ} \cap G^1$ and let $pr:Z^{\circ}\backslash G\to G^1Z^{\circ} \backslash G$ be the natural projection. Then one can readily see that $\ker(pr)=Z^\circ\backslash G^1Z^\circ\cong Z^1\backslash G^1$, which implies
$$G^1{Z^{\circ}} \backslash G \cong \left(Z^1\backslash G^1 )\right\backslash (Z^{\circ}\backslash G).$$
Thus, we can compute
\begin{align*}\displaystyle\int_{Z^{\circ} \backslash G}f(g)dg&=\displaystyle\int_{(Z^1 \backslash G^1) \backslash (Z^{\circ} \backslash G)} \displaystyle\int_{Z^1 \backslash G^1} f(g_1\overline{g})dg_1d\overline{g}\\
&=\displaystyle\int_{G^1Z^{\circ} \backslash G} \displaystyle\int_{Z^1 \backslash G^1} f(g_1\overline{g})dg_1d\overline{g}\\
&=\displaystyle\sum_{\overline{g} \in G^1Z^{\circ}\backslash G}\displaystyle\int_{Z^1 \backslash G^1} f(g_1\overline{g})dg_1,
\end{align*}
where the last equality follows because $G^1Z^{\circ} \backslash G$ is finite. Hence we have
\[
\int_{Z^{\circ} \backslash G} |f(g)|dg<\infty\quad\text{if and only if}\quad\int_{Z^1\backslash G^1} |f(g)|dg<\infty.
\]

On the other hand, using the invariance of the measure we also have
\begin{align*}
\int_{G^1}f(g)dg & = \displaystyle\int_{Z^1 \backslash G^1}\displaystyle\int_{Z^1}f(ag)dadg\\
&=\displaystyle\int_{Z^1 \backslash G^1}\displaystyle\int_{Z^1}f(g)dadg\\
&=Vol (Z^1)\displaystyle\int_{Z^1 \backslash G^1} f(g)dg,
\end{align*}
since $Z^1 = {Z^{\circ}} \cap G^1$ is compact. Hence
\[
\int_{Z^1\backslash G^1} |f(g)|dg<\infty\quad\text{if and only if}\quad\int_{G^1} |f(g)|dg<\infty.
\]
This finishes the proof.
\end{proof}

To state the second lemma, let us fix a special maximal compact subgroup $K$ of $G$, and a Levi part $M_{\min}$ of a minimal parabolic of $G$. Then we have a Cartan decomposition ${G}=KM^{+}_{\min}K$ where
$$M^{+}_{\min}:=\{m \in M_{\min}:|\alpha_i(m)| \leq 1 \text { for } i=1,\dots,r\}$$
where the $\alpha_i$ correspond to the simple roots. From equation (4) of \cite{Wal03}, we have
\begin{equation}\label{4}
G = {\displaystyle\coprod_{m \in M^+_{\min} / M_{\min}^1}}KmK,
\end{equation}
where
\[
M_{\min}^1:=M_{\min}\cap G^1.
\]
Then we have the following lemma.
\begin{lemma}\label{sil} Let $F$ be non-archimedean.
For $f \in L^1(G)$,
$$\displaystyle\int_{G} f(g)dg = \displaystyle\int_{M_{\min}^{+}}\mu(m)\displaystyle\int_{K \times K} f(k_1mk_2)dk_1dk_2dm$$
where $\mu(m)=\dfrac{Vol(KmK)}{Vol(M_{\min}^1)}=C \cdot Vol(KmK)$, for some positive constant $C$.
\end{lemma}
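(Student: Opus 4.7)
The plan is to combine the Cartan decomposition \eqref{4} with a standard averaging argument, then convert the resulting discrete sum over $M_{\min}^+/M_{\min}^1$ into an integral over $M_{\min}^+$.

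First, using \eqref{4}, I would break up the integral as
\[
\int_{G} f(g)\,dg \;=\; \sum_{m \in M_{\min}^{+}/M_{\min}^1}\int_{KmK} f(g)\,dg,
\]
which is legitimate because $f \in L^1(G)$ and the decomposition is disjoint. Next, for each fixed double coset, I would rewrite the inner integral by averaging over $K \times K$. Normalizing $\mathrm{Vol}(K)=1$ and using left- and right-$K$-invariance of the Haar measure $dg$ on $G$, Fubini gives
\[
\int_{KmK} f(g)\,dg \;=\; \int_{K\times K}\int_{KmK} f(g)\,dg\,dk_1\,dk_2 \;=\; \int_{KmK}\!\!\int_{K\times K} f(k_1 g k_2)\,dk_1\,dk_2\,dg.
\]
The inner double integral is now a $K$-biinvariant function of $g$, hence constant on $KmK$, equal to $\int_{K\times K} f(k_1 m k_2)\,dk_1\,dk_2$. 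Therefore
\[
\int_{KmK} f(g)\,dg \;=\; \mathrm{Vol}(KmK)\int_{K\times K} f(k_1 m k_2)\,dk_1\,dk_2.
\]

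The second step is to convert the sum over $m \in M_{\min}^+/M_{\min}^1$ to an integral over $M_{\min}^+$ against the Haar measure $dm$ restricted from $M_{\min}$. Because $K$ is a special maximal compact subgroup, $M_{\min}^1 = M_{\min}\cap G^1$ is the (unique) maximal compact subgroup of $M_{\min}$ and is contained in $K$. Consequently, for each $m$, the integrand
\[
h(m) \;:=\; \mathrm{Vol}(KmK)\int_{K\times K} f(k_1 m k_2)\,dk_1\,dk_2
\]
is both left- and right-$M_{\min}^1$-invariant. Hence $h$ descends to a function on the discrete set $M_{\min}^+/M_{\min}^1$, and integration over the open submonoid $M_{\min}^+$ yields
\[
\int_{M_{\min}^+} h(m)\,dm \;=\; \mathrm{Vol}(M_{\min}^1)\sum_{m \in M_{\min}^+/M_{\min}^1} h(m).
\]
Dividing by $\mathrm{Vol}(M_{\min}^1)$ and combining with the previous step produces the claimed identity, with
\[
\mu(m) \;=\; \frac{\mathrm{Vol}(KmK)}{\mathrm{Vol}(M_{\min}^1)}.
\]

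The only mildly delicate point is justifying all exchanges of integration (and the sum) as absolutely convergent; this is immediate from $f \in L^1(G)$ together with positivity/Tonelli applied to $|f|$, which also shows that $\mu$-a.e.\ $m$ the map $(k_1,k_2)\mapsto f(k_1 m k_2)$ lies in $L^1(K\times K)$. I would also note briefly that the assertion $\mu(m) = C\cdot\mathrm{Vol}(KmK)$ for a positive constant is just the observation that $\mathrm{Vol}(M_{\min}^1)$ is a fixed positive quantity, since $M_{\min}^1$ is compact and open in itself. The only real subtlety worth flagging is the containment $M_{\min}^1 \subseteq K$ and the identification of $M_{\min}^1$ with the maximal compact of $M_{\min}$, both of which rely on the specialness of $K$.
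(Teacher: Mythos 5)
Your proposal is correct and follows essentially the same route as the paper: split $G$ into the double cosets $KmK$ via \eqref{4}, rewrite each double-coset integral as $\mathrm{Vol}(KmK)$ times the $K\times K$-average at $m$, and then convert the sum over $M_{\min}^+/M_{\min}^1$ into an integral over $M_{\min}^+$ using compactness and bi-$M_{\min}^1$-invariance. The paper is more terse on the averaging step (citing Silberger for the identity $\int_{KmK}f = \mathrm{Vol}(KmK)\int_{K\times K}f(k_1mk_2)$) and writes the sum-to-integral conversion via an auxiliary iterated integral over $M_{\min}^+/M_{\min}^1 \times M_{\min}^1$, whereas you spell out the invariance of $h(m)$ and the coset decomposition of $M_{\min}^+$ directly; both come to the same thing.
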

\begin{proof}
Using  \cite[pg.\ 149]{Sil79} we have
\begin{align*}
\displaystyle\int_{{G}} f(x)dx & =  \displaystyle\sum_{m \in M^{+}_{\min} /M_{\min}^1}\displaystyle\int_{K m K} f(x)dx\\
&=\displaystyle\sum_{m \in M^{+}_{\min} /M_{\min}^1}\displaystyle\int_{K \times K}Vol(KmK)f(k_1mk_2)dk_1dk_2 \\
&=\displaystyle\sum_{m \in M^{+}_{\min} /M_{\min}^1}Vol(KmK)\displaystyle\int_{K \times K} f(k_1mk_2)dk_1dk_2 \\
&=Vol(M_{\min}^1)\displaystyle\sum_{m \in M^{+}_{\min}/M_{\min}^1}\mu(m)\displaystyle\int_{K \times K} f(k_1mk_2)dk_1dk_2\\
&=\displaystyle\int_{M_{\min}^+/M^1_{\min}}\displaystyle\int_{M^1_{\min}}\mu(mm_1)\displaystyle\int_{K \times K} f(k_1mm_1k_2)dk_1dk_2dm_1dm\\
&=\displaystyle\int_{M_{\min}^{+}}\mu(m)\displaystyle\int_{K \times K} f(k_1mk_2)dk_1dk_2dm,
\end{align*}
where we used that $Vol(M^1_{\min})$ is finite because $M^1_{\min}$ is compact. 
 \end{proof}

\begin{remark}

In the archimedean case, a similar integral formula as in Lemma \ref{sil} holds (see, for example, \cite[Th.\ 5.8]{Hel00}).
\end{remark}

\subsection{Convergence of the integral}
By using the two lemmas in the previous subsection we are now in a position to prove the convergence of the integral in (\ref{E:local_integral_alpha}). Hence in this subsection, we specialize to $G=\GSpin(V_n)$, where $(V_n, q_n)$ is an $n$-dimensional quadratic space over $F$. In this case, we have
\[
G^1 = \{ g \in \GSpin(V_n) : |N(g)|=1\}.
\]
Also note that we have a Witt decomposition $V= X \oplus V_n^{an} \oplus Y$, where $X$ and $Y$ are totally isotropic spaces and $V_n^{an}$ is the anisotropic part. By fixing a basis for $X$, we obtain a minimal parabolic subgroup $P_{\min}=M_{\min}N_{\min}$ of $G$ with
\[
M_{\min}= \underbrace{{\GL}_1 \times {\GL}_1 \cdots \times {\GL}_1}_{r -\text{times}}\times \GSpin(V^{an}),
\]
where $r$ is the Witt rank of $V_n$, which is by definition the dimension of $X$. Then one can see that
\[
M^+_{\min}=\{(x_1,x_2,\dots,x_r,g_{an}):|x_i|\leq |x_{i+1}|\}.
\]
We define
\begin{align*}
M^{+,1}_{\min}\:&= \{ m \in M^+_{\min}:|\nu(m)|=1\}\\
&= \{ m \in M^+_{\min}:|\nu(g_{an})|=1\}.\\
\end{align*}
The maximal torus $A_{\min}$ of $M_{\min}$  is of the form
\begin{align*}
A_{\min}&= \underbrace{{\GL}_1 \times {\GL}_1 \cdots \times {\GL}_1}_{r-\text{times}} \times {\GL}_1 \\
&=\{(x_1,x_2,\dots,x_r,x_0)\}.
\end{align*}
We then define
\begin{align*}
A^{+,1}_{\min}:&=\{a \in A_{\min}:|\alpha_i(a)| \leq 1 \text { for } i=1, 2,\dots r \text{ and } a \in G^1\}\\
&=\{(x_1,x_2,\dots, x_r,x_0): |x_i| \leq |x_{i+1}|, 1 \leq i \leq n-1 \text{ and }|x_0|=1\}.
\end{align*}
Let $\delta_{P_{\min}}$ be the modulus character of $P_{\min}$. Then
\[
\delta_{P_{\min}}(x)=\displaystyle\prod_{i=1}^r|x_i|^{d+2r-2i}.
\]

Now, let us get to the integral we would like to show to be convergent. Assume we have  $(V_n, q_n)\subseteq (V_{n+1}, q_{n+1})$, so that we have $\GSpin(V_n)\subseteq\GSpin(V_{n+1})$. For simplicity, we write $G_n=\GSpin(V_n)$ and $G_{n+1}=\GSpin(V_{n+1})$. Let $\pi_i$ be a tempered representation of $G_i$ such that $\omega_{\pi_n}\omega_{\pi_{n+1}}=\omega^2$ for some $\omega$. Then the integrant for $\alpha_{\omega}^{\natural}$ in \eqref{E:local_integral_alpha} is a product of matrix coefficients of $\pi_n$ and $\pi_{n+1}$ together with $\omega^{-1}(N(g))$. Hence the convergence of the integral boils down to the following.
\begin{proposition}\label{P:convergence_local_period}
Keep the above notation and assumption, so in particular assume $\pi_{n}$ and $\pi_{n+1}$ are tempered. Then for all the matrix coefficients $\Phi_{n+1}$ and $\Phi_{n}$ of $\pi_{n+1}$ and $\pi_n$, respectively, the integral
\[
\int_{Z_{n}^{\circ}\backslash G_n }\Phi_{n+1}(g)\Phi_{n}(g) \omega^{-1}(N(g))dg
\]
is absolutely convergent, where recall that $Z_n^\circ=\GL_1$ even when $n=2$.
\end{proposition}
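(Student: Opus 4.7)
The plan is to deduce convergence from the analogous statement for special orthogonal groups proved in \cite[\S 1]{II10}, using Lemmas~\ref{conv} and~\ref{sil} to eliminate the center $Z_n^\circ$.

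The first step is to verify the hypothesis of Lemma~\ref{conv}, namely that $f(g):=\Phi_{n+1}(g)\Phi_n(g)\omega^{-1}(N(g))$ is invariant under $Z_n^\circ=\GL_1$. For $z\in Z_n^\circ$, the transformation rules $\Phi_i(zg)=\omega_{\pi_i}(z)\Phi_i(g)$ together with $N(zg)=z^2 N(g)$ and the hypothesis $\omega^2=\omega_{\pi_{n+1}}\omega_{\pi_n}$ give $f(zg)=f(g)$. Lemma~\ref{conv} therefore reduces the absolute convergence of the original integral to that of $\int_{G_n^1}|f(g)|\,dg$, and since $\omega$ is unitary we have $|\omega^{-1}(N(g))|=1$, so it suffices to show
\[
\int_{G_n^1}|\Phi_{n+1}(g)\Phi_n(g)|\,dg<\infty.
\]

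Second, I would apply the Cartan decomposition $G_n=KM_{\min}^+ K$. Because $K\subseteq G_n^1$, the part of the integral supported in $G_n^1$ arises exactly from $m\in M_{\min}^{+,1}$, and Lemma~\ref{sil} (together with its archimedean analogue cited in the remark after it) gives
\[
\int_{G_n^1}|\Phi_{n+1}\Phi_n|(g)\,dg=\int_{M_{\min}^{+,1}}\mu(m)\int_{K\times K}|\Phi_{n+1}(k_1 m k_2)\Phi_n(k_1 m k_2)|\,dk_1\,dk_2\,dm.
\]
Applying the standard Harish-Chandra bound $|\Phi_i(g)|\le C\,\Xi_{G_i}(g)(1+\sigma(g))^{d}$ for matrix coefficients of tempered representations, and using the bi-$K$-invariance of $\Xi$ and $\sigma$, this reduces the problem to the finiteness of
\[
\int_{M_{\min}^{+,1}}\mu(m)\,\Xi_{G_{n+1}}(m)\,\Xi_{G_n}(m)\,(1+\sigma(m))^{2d}\,dm.
\]

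The main obstacle is this final harmonic-analytic estimate, but it is precisely the one carried out in \cite[\S 1]{II10} for the pair $(\SO_{n+1},\SO_n)$. Having already quotiented out the central $\GL_1$, the relevant $\Xi$-functions, modulus characters, and $\mu$-asymptotics on $M_{\min}^{+,1}$ coincide (via Lemma~\ref{center}, which gives $G_n/Z_n^\circ\cong\SO_n$ and $G_{n+1}/Z_{n+1}^\circ\cong\SO_{n+1}$) with those of the corresponding $\SO$ groups up to harmless bounded factors, so the convergence follows from the same combination of bounds used in the $\SO$ case: $\mu(m)\asymp\delta_{P_{\min}}(m)^{-1}$, $\Xi_{G_i}(m)\lesssim\delta_{P_{\min}}^{1/2}(m)(1+\sigma(m))^N$, and the additional exponential decay contributed by the simple root attached to the added dimension of $V_{n+1}$.
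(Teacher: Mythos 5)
Your proposal follows essentially the same strategy as the paper's proof: Lemma~\ref{conv} to pass from $Z_n^\circ\backslash G_n$ to $G_n^1$, Lemma~\ref{sil} / Cartan decomposition to pass to $M_{\min}^{+,1}$, the tempered matrix-coefficient bound, and finally a reduction to the integrals estimated in \cite{II10}. Two gaps, however, keep it from being complete.

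First, the case $n=2$ is not covered. Lemma~\ref{conv} requires $Z^\circ$ to be the identity component of the center, which by Lemma~\ref{center} holds only when $n>2$. When $n=2$ the paper's convention keeps $Z_2^\circ=\GL_1$, but this is a proper subgroup of the (larger, abelian) center, so Lemma~\ref{conv} does not directly apply. The paper therefore handles $n=2$ by a separate argument: when $G_2=E^\times$ the quotient $\GL_1\backslash G_2=E^1$ is compact and convergence is immediate, and when $G_2=F^\times\times F^\times$ one directly verifies the estimate. Your proposal silently applies Lemma~\ref{conv} in a case where its hypothesis fails.

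Second, the claim that the $\Xi$-functions, modulus characters, and $\mu$-asymptotics ``coincide with those of the corresponding $\SO$ groups up to harmless bounded factors'' is not quite a proof. After using $\mu(m)\lesssim \delta_{P_{n,\min}}^{-1}$ and the Harish-Chandra bounds, one must still spell out the resulting integral over $A_{n,\min}^{+,1}$, and the paper does this carefully, splitting off the compact $|x_0|=1$ factor and then treating separately the two parities: when $n$ is even, $A_{n,\min}^{+,1}\subset A_{n+1,\min}^{+,1}$ and the reduction to the Ichino--Ikeda integral is direct; when $n$ is odd this inclusion fails, and the integral becomes a sum of two pieces corresponding to $|x_{r_n}|\leq 1$ and $|x_{r_n}|^{-1}\leq 1$, matching the two-term integral in \cite[pg.~1388]{II10}. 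Your argument never confronts this asymmetry, and an appeal to ``harmless bounded factors'' does not by itself explain where the second term comes from. Adding the $n=2$ base case and making the even/odd reduction explicit would bring your proof in line with the paper's.
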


\begin{proof}
Let us first mention that, in this proof, for each $G_i$ we denote the various subgroups introduced above by $P_{i,\min}$, $A_{i,\min}$, $M_{i,\min}^{1}$, $M_{i,\min}^{+,1}$ , $A_{i,\min}^{+,1}$, etc, and also we denote the Witt index of $V_i$ by $r_i$.

Assume $n>2$. Then $Z_n^{\circ}$ is indeed the identity component of the center and so by Lemma \ref{conv} it suffices to show the absolute convergence of
\[
\int_{G_n^1}\Phi_{n+1}(g)\Phi_{n}(g) \omega^{-1}(N(g))dg.
\]
Using \eqref{4} we have
\[
G_n^1 = \left(\coprod_{m \in M^+_{n,\min} / M_{n,\min}^1}K_nmK_n \right)\bigcap G_n^1=\coprod_{m \in (M^+_{n,\min} / M_{n,\min}^1)\cap G_n^1}K_nmK_n.
\]
Then by Lemma \ref{sil}, the convergence of the integral is reduced to the convergence of
\[
\int_{M^{+,1}_{n,\min}}\mu(m)\displaystyle\int_{K_n  \times K_n } |\Phi_{n+1}(k_1mk_2)||\Phi_{n}(k_1mk_2)|dk_1dk_2dm
\]
where $\mu(m)= C \cdot Vol(K_nmK_n)$ for some positive constant $C$.

Furthermore, since $\Phi_{n}$ and $\Phi_{n+1}$ are matrix coefficients of tempered representations, they satisfy for any $g \in G_i$
$$|\Phi_{i}(g)| \leq A\Xi_i(g)(1+\sigma_i(g))^{B}$$
for some positive constants $A$ and $B$, where $\Xi_i$ and $\sigma_i$ are, respectively, Harish-Chandra's spherical function and a height function on $G_i$. (See \cite[pg.\ 274]{Wal03}.) Note that here we may and do assume $\sigma_{n+1}|_{G_n}=\sigma_n$ and simply write $\sigma$ for both. Since both $\Xi_i$ and $\sigma$ are $K_i$-bi-invariant, and $K_n \times K_n$ is compact, the convergence of the integral reduces to the convergence of
$$\displaystyle\int_{M^{+,1}_{n,\min}}\mu(m) \Xi_{n+1}(m)\Xi_{n}(m)(1+\sigma(m))^{2B}dm.$$

By Theorem 4.2.1 in \cite[p.154]{Sil79} and \cite[Lemma II.1.1]{Wal03}, there exist positive constants $A$ and $B$ such that
$$A^{-1}\delta_{P_{i, \min}}^{1/2}(m)\leq \Xi_i(m)\leq  A\delta_{P_{i,\min}}^{1/2}(m)(1+\sigma(m))^{B}$$
for any $m\in M^{+,1}_{i,\min}$. So the convergence of the integral is reduced to the convergence of
 $$\displaystyle\int_{M^{+,1}_{n,\min} }\mu(m)\displaystyle (\delta_{P_{n+1,\min}})^{1/2}(m) (\delta_{P_{n,\min}})^{1/2}(m)(1+\sigma(m))^{2B}dm.$$

Moreover,  there exists a positive constant $A$ such that $\mu(m) \leq A\delta_{P_{n,\min}}^{-1}$ for any $m \in M^{+,1}_{n,\min}$ \cite[page 241]{Wal03}.  So it is enough to show that
$$\displaystyle\int_{M^{+,1}_{n,\min}}(\delta_{P_{n,\min}})^{-1/2}(m)(\delta_{P_{n+1,\min}})^{1/2}(m)(1+\sigma(m))^{2B} dm$$
converges absolutely.

When $n$ is even, $A^{+,1}_{n,\min}$ sits inside of $A_{n+1,\min}^{+,1}$. Thus, the convergence of the integral is reduced to the convergence of
\[
\int_{A_{n,\min}^{+,1}}(\delta_{P_{n,\min}})^{-1/2}(m)(\delta_{P_{n+1,\min}})^{1/2}(m)(1+\sigma(m))^{2B} dm.
\]
Hence, the convergence of the integral is reduced to  the convergence of
$$ \displaystyle\int_{|x_0|=1}\displaystyle\int_{|x_1|\leq \cdots \leq |x_{r_{n}}|\leq 1}|x_1\cdots x_{r_{n}}|^{1/2}\left(1 -\displaystyle\sum_{j=1}^{r_{n}}\log|x_j|\right)^{2B}d^{\times}x_1 \cdots d^{\times}x_{r_{n}}d^{\times}x_0$$
$$= \displaystyle\int_{|x_0|=1}d^{\times}x_0\displaystyle\int_{|x_1|\leq \cdots \leq |x_{r_{n}}|\leq 1}|x_1\cdots x_{r_{n}}|^{1/2}\left(1 -\displaystyle\sum_{j=1}^{r_{n}}\log|x_j|\right)^{2B}d^{\times}x_1 \cdots d^{\times}x_{r_{n}}.$$
Since $\displaystyle\int_{|x_0|=1}d^{\times}x_0$ is an integral over a compact set, the convergence of the integral is reduced to the convergence of $$\displaystyle\int_{|x_1|\leq \cdots \leq |x_{r_{n}}|\leq 1}|x_1\cdots x_{r_{n}}|^{1/2}\left(1 -\displaystyle\sum_{j=1}^{r_{n}}\log|x_j|\right)^{2B}d^{\times}x_1 \cdots d^{\times}x_{r_{n}}$$
which is precisely the integral that Ichino-Ikeda consider in \cite[pg. 1388] {II10}.

When $n$ is odd, $A_{n,\min}^{+,1}$ is not a subset of $A_{n+1,\min}^{+,1}$.  Hence, the convergence of the integral in this case is reduced to the convergence of
 \begin{align*}
 \displaystyle\int_{|x_0|=1}\displaystyle\int_{|x_1| \leq \cdots \leq |x_{r_{n}}|\leq 1}&|x_1\cdots x_{r_{n}}|^{1/2}\left(1 -\displaystyle\sum_{j=1}^{r_{n}}\log|x_j|\right)^{2B}d^{\times}x_1 \cdots d^{\times}x_{r_{n}}\\
& + \displaystyle\int_{|x_1| \leq \cdots \leq |x_{r_{n}-1}| \leq |x_{r_{n}}|^{-1}\leq 1}|x_1\cdots x_{r_{n}-1}x_{r_{n}}^{-1}|^{1/2}\\
&\left(1 -\displaystyle\sum_{j=1}^{r_{n}-1}\log|x_j|+\log|x_{r_{n}}|\right)^{2B}d^{\times}x_1 \cdots d^{\times}x_{r_{n}}d^{\times}x_0
\end{align*}
\begin{align*}
 =\displaystyle\int_{|x_0|=1}d^{\times}x_0\displaystyle\int_{|x_1| \leq \cdots \leq |x_{r_{n}}|\leq 1}&|x_1\cdots x_{r_{n}}|^{1/2}\left(1 -\displaystyle\sum_{j=1}^{r_{n}}\log|x_j|\right)^{2B}d^{\times}x_1 \cdots d^{\times}x_{r_{n}}\\
& + \displaystyle\int_{|x_1| \leq \cdots \leq |x_{r_{n}-1}| \leq |x_{r_{n}}|^{-1}\leq 1}|x_1\cdots x_{r_{n}-1}x_{r_{n}}^{-1}|^{1/2}\\
&\left(1 -\displaystyle\sum_{j=1}^{r_{n}-1}\log|x_j|+\log|x_{r_{n}}|\right)^{2B}d^{\times}x_1 \cdots d^{\times}x_{r_{n}}d^{\times}x_0.
\end{align*}
Since $\displaystyle\int_{|x_0|=1}d^{\times}x_0$ is an integral over a compact set, the convergence of the integral is reduced to the convergence of
 \begin{align*}
 \displaystyle\int_{|x_1| \leq \cdots \leq |x_{r_{n}}|\leq 1}&|x_1\cdots x_{r_{n}}|^{1/2}\left(1 -\displaystyle\sum_{j=1}^{r_{n}}\log|x_j|\right)^{2B}d^{\times}x_1 \cdots d^{\times}x_{r_{n}}\\
& + \displaystyle\int_{|x_1| \leq \cdots \leq |x_{r_{n}-1}| \leq |x_{r_{n}}|^{-1}\leq 1}|x_1\cdots x_{r_{n}-1}x_{r_{n}}^{-1}|^{1/2}\\
&\left(1 -\displaystyle\sum_{j=1}^{r_{n}-1}\log|x_j|+\log|x_{r_{n}}|\right)^{2B}d^{\times}x_1 \cdots d^{\times}x_{r_{n}}.
\end{align*}
This integral is precisely the integral that Ichino-Ikeda consider in \cite[pg. 1388] {II10}.

Lastly, assume $n=2$. In this case, we have seen $G_2=F^\times\times F^\times$ or $E^\times$. If $G_2=E^\times$ then $\GL_1\backslash G_2=E^1$, which is compact, and hence the convergence of the integral is immediate. If $G_2=F^\times\times F^\times$ then $\GL_1\backslash G_2=F^\times$, in which case we can apply the above argument by using the estimate of the matrix coefficient, and indeed the computation is easier and left to the reader.
\end{proof}

\subsection{Calculation of integrals in the unramified case}
In this subsection, we consider the unramified integral. Accordingly, we assume that all the data are unramified. To be precise, we assume
\begin{enumerate}
\item $G_i$ is unramified over $F$;
\item $K_{i}$ is a hyperspecial maximal compact subgroup of $G_i$;
\item $K_n \subset K_{n+1}$;
\item $\pi_{i}$ is an unramified representation of $G_i$;
\item $\displaystyle\int_{K_{i}}dg_{i}=1$.
\end{enumerate}
Furthermore, let $\omega$ be the unramified character such that $\omega^2=\omega_{\pi_{n+1}}\omega_{\pi_n}$. Note that there is a unique such $\omega$.

Then we have the following.
\begin{proposition}\label{P:unramified_case}
Under the above assumptions, let $\phi^\circ\in\pi_{n+1}$ and $f^{\circ}\in\pi_n$ be the spherical vectors such that
\[
\mathcal{B}_{\pi_{n+1}}(\phi^\circ,\phi^{\circ})=1\qand
\mathcal{B}_{\pi_{n}}(f^\circ,f^{\circ})=1.
\]
Then we have
\[
\alpha_{\omega}^{\sharp}(\phi^\circ, f^\circ)=\Delta_{\SO_{n+1}}\cdot\frac{L(1/2,\pi_{n}\times \pi_{n+1} \otimes \omega^{-1})}{L(1,{\pi_{n+1}}, \Ad)L(1,{\pi_{n}}, \Ad)}.
\]
\end{proposition}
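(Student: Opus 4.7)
The plan is to reduce the computation to the unramified formula of Ichino--Ikeda \cite{II10} for the pair $(\SO_{n+1},\SO_n)$ by twisting the representations $\pi_{n+1}$ and $\pi_n$ so that they descend through the projections $p:\GSpin_i\twoheadrightarrow\SO_i$. Since $\ker p=Z_i^\circ=\GL_1$ and the spinor norm satisfies $N(z)=z^2$ on $Z_i^\circ$, this requires choosing unramified square roots of $\omega_{\pi_i}^{-1}$, and all unramified characters of $F^\times$ admit such square roots (unique up to the sign character $x\mapsto(-1)^{v(x)}$). First I pick unramified characters $\mu_{n+1},\mu_n$ of $F^\times$ with $\mu_i^2=\omega_{\pi_i}^{-1}$. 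Then $(\mu_{n+1}\mu_n)^2=\omega^{-2}$, and after possibly absorbing a sign character into $\mu_{n+1}$ I may arrange $\mu_{n+1}\mu_n=\omega^{-1}$.

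Set $\pi_i':=\pi_i\otimes(\mu_i\circ N)$. Since $N(z)=z^2$ on $Z_i^\circ$, the central character of $\pi_i'$ on $Z_i^\circ$ is trivial, so $\pi_i'$ descends to an unramified tempered representation of $\SO_i(F)$, again denoted $\pi_i'$. Writing $\Phi_{n+1}(g)=\mathcal{B}_{\pi_{n+1}}(\pi_{n+1}(g)\phi^\circ,\phi^\circ)$ and $\Phi_n(g)=\mathcal{B}_{\pi_n}(\pi_n(g)f^\circ,f^\circ)$, and using that the spinor norm on $\GSpin_{n+1}$ restricts to the spinor norm on $\GSpin_n$, the compatibility $\mu_{n+1}\mu_n=\omega^{-1}$ yields
\[
\Phi_{n+1}(g)\Phi_n(g)\omega^{-1}(N(g))=\Phi_{n+1}'(g)\Phi_n'(g),\qquad g\in\GSpin_n(F),
\]
where $\Phi_i'$ denotes the analogous matrix coefficient of $\pi_i'$. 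The projection $p$ identifies $Z_n^\circ(F)\backslash\GSpin_n(F)\cong\SO_n(F)$; under this identification the local measures, normalized so that the hyperspecial maximal compacts have volume one, correspond. Hence
\[
\alpha_\omega^\sharp(\phi^\circ,f^\circ)=\int_{\SO_n(F)}\Phi_{n+1}'(h)\Phi_n'(h)\,dh,
\]
which is exactly the local Ichino--Ikeda integral for $(\pi_{n+1}',\pi_n')$ and equals, by the unramified computation of \cite{II10},
\[
\Delta_{\SO_{n+1}}\,\frac{L(1/2,\pi_{n+1}'\times\pi_n')}{L(1,\pi_{n+1}',\Ad)\,L(1,\pi_n',\Ad)}.
\]

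It remains to translate these $L$-factors back to the original representations. Under local Langlands, twisting $\pi_i$ by $\mu_i\circ N$ twists its standard Langlands parameter by the scalar character $\mu_i$, because the spinor norm on $\GSpin_i$ corresponds dually to the similitude character on $\widehat{\GSpin}_i$, which acts by scalars in the standard representation. Consequently
\[
L(s,\pi_{n+1}'\times\pi_n')=L(s,\pi_{n+1}\times\pi_n\otimes(\mu_{n+1}\mu_n))=L(s,\pi_{n+1}\times\pi_n\otimes\omega^{-1}).
\]
The adjoint $L$-function is unchanged, since the similitude center acts trivially on $\mathfrak{sp}_{2m}$ and $\mathfrak{so}_{2m}$, so $L(s,\pi_i',\Ad)=L(s,\pi_i,\Ad)$. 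Substituting these identities yields the claimed formula. The main technical point will be the bookkeeping of the compatible square roots $\mu_i$ described above, together with the verification that similitude twists leave the adjoint $L$-factor invariant and shift the tensor-product $L$-factor by exactly $\omega^{-1}$; granting this dictionary, the reduction to the $\SO$ case of \cite{II10} completes the proof.
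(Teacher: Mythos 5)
Your proposal is correct and follows essentially the same approach as the paper's proof: both twist $\pi_{n+1}$ and $\pi_n$ by unramified square roots of $\omega_{\pi_i}^{-1}$ composed with the spinor norm to kill the central characters and descend to $\SO_{n+1}\times\SO_n$, then invoke the unramified computation of Ichino--Ikeda and translate the $L$-factors back. Your treatment is a bit more explicit on two secondary points the paper glosses over, namely the possible need to absorb the unramified quadratic character into one of the square roots to force $\mu_{n+1}\mu_n=\omega^{-1}$, and the dual-group justification that a spinor-norm twist shifts the standard parameter by a scalar while leaving the adjoint parameter unchanged; otherwise the argument is identical.
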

\begin{proof}
Let
\[
\Phi^{\circ}_{\pi_{n+1}}(g):=\mathcal{B}_{\pi_{n+1}}(\pi_{n+1}(g)\phi^\circ,\phi^{\circ})\qand
\Phi^{\circ}_{\pi_{n}}(g):=\mathcal{B}_{\pi_{n}}(\pi_{n}(g)f^\circ,f^{\circ});
\]
namely they are the normalized spherical matrix coefficients so that $\Phi^{\circ}_{n+1}(1)=\Phi^{\circ}_{n}(1)=1$. Since $\omega_{\pi_{n+1}}$ and $\omega_{\pi_n}$ are unramified, there exist unique unramified square roots $\omega_{\pi_{n+1}}^{1/2}$ and $\omega_{\pi_n}^{1/2}$. Let us denote
\[
\overline{\pi_{n+1}}:=\pi_{n+1} \otimes \omega _{\pi_{n+1}}^{-1/2} \circ N\qand
\overline{\pi_{n}}:=\pi_{n} \otimes \omega _{\pi_{n}}^{-1/2} \circ N,
\]
which have trivial central characters and hence viewed as representations of $\SO_{n+1}$ and $\SO_{n}$, respectively. Then one can readily see that
\begin{align*}
\Phi^{\circ}_{\pi_{n+1}}(g)\omega_{\pi_{n+1}}^{-1/2}(N(g))
&=\mathcal{B}_{\pi_{n+1}}(\pi_{n+1}(g)\omega_{\pi_{n+1}}^{-1/2}(N(g))\phi^\circ,\phi^{\circ})\\
&=\Phi^{\circ}_{\overline{\pi_{n+1}}}(g),
\end{align*}
where $\Phi^{\circ}_{\overline{\pi_{n+1}}}$ is the normalized spherical matrix coefficient of $\overline{\pi_{n+1}}$ so that $\Phi^{\circ}_{\overline{\pi_{n+1}}}(1)=1$. Similarly, we have
\[
\Phi^{\circ}_{\pi_{n+1}}(g)\omega_{\pi_{n+1}}^{-1/2}(N(g))=\Phi^{\circ}_{\overline{\pi_{n+1}}}(g).
\]
Hence by using Theorem 1.2 in \cite{II10} we have the following:
\begin{align*}
\alpha^{\sharp}_\omega(\phi^\circ, f^\circ)
=&\int_{Z_{n}^\circ\backslash G_n}\Phi^{\circ}_{\pi_{n+1}}(g)\Phi^{\circ}_{\pi_{n+1}}(g)
\omega_{\pi_{n+1}}^{-1/2}(N(g))\omega_{\pi_{n}}^{-1/2}(N(g))\,dg\\
=&\int_{\SO_n}\Phi^{\circ}_{\overline{\pi_{n+1}}}(g)\Phi^{\circ}_{\overline{\pi_{n}}}(g)\,dg\\
=&\Delta_{\SO_{n+1}}\cdot \frac{L(1/2,\overline{\pi_{n}}\times\overline{\pi_{n+1}})}{L(1,\overline{\pi_{n+1}}, \Ad)L(1,\overline{\pi_{n}}, \Ad)}.
\end{align*}

Now one can readily see that
\[
L(s,\overline{\pi_{n}}\times\overline{\pi_{n+1}})
=L(s,\pi_{n}\otimes\omega_{\pi_n}^{-1/2}\times\pi_{n+1}\otimes\omega_{\pi_{n+1}}^{-1/2})
=L(s,\pi_{n}\times\pi_{n+1}\otimes\omega^{-1})
\]
because $\omega_{\pi_n}^{-1/2}\omega_{\pi_{n+1}}^{-1/2}=\omega^{-1}$. Also one can see that
\[
L(s,\overline{\pi_{n+1}}, \Ad)=L(s,\pi_{n+1}, \Ad)\qand L(s,\overline{\pi_{n+1}}, \Ad)=L(s,\pi_{n+1}, \Ad)
\]
by definition of the adjoint $L$-function. The proposition follows.
\end{proof}

\begin{remark}
Although the local calculations in the unramified section follow from \cite{II10}, this is only possible because the square root always exists for the unramified case.  In the ramified or global cases, we may not assume this.
\end{remark}

\section{Wrap-up of the conjecture and low rank cases}
In this section, let us first wrap-up our conjecture and then prove low rank cases. So in this section we let $F$ be a number field and $\A$ the ring of adeles.

\subsection{Wrap-up}
Assume $\pi_n$ and $\pi_{n+1}$ are tempered cuspidal automorphic representations of $G_n(\A)=\GSpin_n(\A)$ and $G_{n+1}(\A)=\GSpin_{n+1}(\A)$ such that there exists a Hecke character $\omega$ with $\omega^2=\omega_{\pi_n}\omega_{\pi_{n+1}}$. Fix the tensor product decompositions $\pi_n=\otimes_v\pi_{n, v}$ and $\pi_{n+1}=\otimes_v\pi_{n+1, v}$ and fix factorizable $\phi=\otimes_v\phi_v$ in $\pi_{n+1}$ and $f=\otimes_vf_v$ in $\pi_n$.

First of all, for almost all $v$, the assumptions of Proposition \ref{P:unramified_case} are satisfied and hence we have
\[
\alpha_{\omega_v}(\phi_v, f_v)=\frac{L(1,{\pi_{n+1}}, \Ad)L(1,{\pi_{n}}, \Ad)}{\Delta_{\SO_{n+1}}\cdot L(1/2,\pi_{n}\times \pi_{n+1} \otimes \omega^{-1})}\alpha_{\omega_v}^{\sharp}(\phi_v, f_v)=1,
\]
where $\phi_v$ and $f_v$ are the spherical vectors as in the previous section. Thus the infinite product
\[
\prod_v\alpha_{\omega_v}(\phi_v, f_v)
\]
is well-defined.

Accordingly, we can and do form the conjecture
\begin{conjecture} [The global GGP conjecture for GSpin.] With the assumptions stated above
and if $\pi_{n+1}$ and $\pi_n$ appear with multiplicity one in the discrete spectrum, then 
\[
\left|\mathcal{P}_\omega(\phi, f)\right|^2 =\frac{\Delta_{\SO_{n+1}}}{2^{\beta}}\frac{L(1/2, \pi_{n}  \times \pi_{n+1} \otimes \omega^{-1})}{L(1, \pi_{n+1} , \Ad)L(1, \pi_{n} , \Ad)}\prod_v \alpha_{\omega_v}(\phi_v, f_v),
\]
where 
\[
\Delta_{\SO_{n+1}}:=\begin{cases} \zeta(2)\zeta(4)\cdots\zeta(2m) & \text{ if } \dim V_{n+1} = 2m+1,\\
\zeta(2)\zeta(4)\cdots \zeta(2m-2) \cdot L(m, \chi_{V_{n+1}}) & \text { if } \dim V_{n+1}=2m,
\end{cases}
\]
where $\chi_{V_{n+1}}$ is the quadratic character associated with $V_{n+1}$.
\end{conjecture}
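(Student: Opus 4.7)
The plan is to verify the conjecture for the low-rank cases $n=2,3,4$ by leveraging the explicit isomorphisms of Section 2.2 to reduce each case to a classical identity (Waldspurger, Ichino, Gan-Ichino) applied to a familiar group. The central technical device is the following: because $\omega^2=\omega_{\pi_{n+1}}\omega_{\pi_n}$, the combined period integral is insensitive to simultaneously twisting $\pi_{n+1}$ and $\pi_n$ by characters of the form $\chi\circ N$ whose squares cancel in $\omega$. In the unramified computation (Proposition \ref{P:unramified_case}) this was implemented by writing $\overline{\pi_i}:=\pi_i\otimes\omega_{\pi_i}^{-1/2}\circ N$; globally one does not have a canonical square root, but one can still reduce, after twisting and using the short exact sequence $1\to\GL_1\to\GSpin_n\to\SO_n\to 1$, to a period on a classical group in each low-rank case.

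For $n=2$: since $\GSpin_3\cong D^\times$ for a quaternion algebra $D$ (so $\GL_2$ in the split case) and $\GSpin_2$ is either $\GL_1\times\GL_1$ or $\Res_{E/F}\GL_1$, the period $\mathcal{P}_\omega(\phi,f)$ unwinds to a toric period of a $\GL_2$-cusp form twisted by a Hecke character of a torus. This is exactly the setting of the Waldspurger formula \cite{Wal85}. The steps are: (i) transport $\pi_3$ and $\pi_2$ across the isomorphisms of Section 2.2; (ii) match the standard tensor-product $L$-function $L(s,\pi_3\times\pi_2\otimes\omega^{-1})$ with the base-change or $\chi$-twisted $L$-function in Waldspurger; (iii) match the adjoint $L$-functions and $\Delta_{\SO_3}=\zeta(2)$ with the corresponding factors in the Waldspurger identity; (iv) verify Conjecture \ref{C:conj2} for $2^\beta$ in this range.

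For $n=3$: use $\GSpin_4\cong\{(g_1,g_2)\in\GL_2\times\GL_2:\det g_1=\det g_2\}$ in the split case (and its inner-form variant otherwise) with $\GSpin_3\cong\GL_2$ embedded diagonally. The period becomes, after disentangling the $\det=\det$ constraint with the twist by $\omega^{-1}\circ N$, a triple-product integral of three $\GL_2$-cusp forms with appropriate central-character conditions, so Ichino's triple product formula \cite{Ich08} applies. The $L$-function matches the Garrett triple-product $L(s,\pi_a\times\pi_b\times\pi_c)$ up to the twist. For $n=4$: use $\GSpin_5\cong\GSp_4$ and the embedding of $\GSpin_4$ coming from the standard Levi $\GL_2\times\GL_2$ of a Siegel parabolic; this is precisely the situation treated in \cite{GI11}, and the conjecture follows after aligning conventions.

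The hardest step in each case will be matching the constant $2^\beta$ and checking its compatibility with Conjecture \ref{C:conj2} via the component-group computation. The central $\GL_1$ in $\GSpin_n$ modifies the Langlands parameter compared to its $\SO_n$ analogue, and the full preimage $S_{\phi,sc}$ acquires extra $\mu_2$ factors (one for odd $n$, a $\mu_2\times\mu_2$ or $\mu_4$ for even $n$), which must be carefully tracked through the explicit descriptions of the $L$-parameters of $\pi_{n+1}$ and $\pi_n$ so that the right-hand side of the classical identity, when pulled back, carries exactly the $\GSpin$-side $2^\beta$. All remaining work (almost-everywhere unramified matching, measure normalizations, and $\Delta_{\SO_{n+1}}$) is already handled by Section 3, so the proof in each case reduces to an identification of local data at the bad places followed by invocation of the relevant classical formula.
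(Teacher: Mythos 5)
Your high-level strategy — transport each low-rank case to a classical identity (Waldspurger, Ichino, Gan--Ichino) via the explicit isomorphisms $\GSpin_3\cong D^\times$, $\GSpin_4\cong (D^\times\times D^\times)^\circ$ or $\{g\in D^\times_E : N_{D_E}(g)\in F^\times\}$, $\GSpin_5\cong\GSp_4$, and match $L$-functions and the $2^\beta$ constant — is indeed the route the paper takes. But the plan omits a step that the paper treats as substantive and non-trivial, and contains a structural error about the embedding $\GSpin_4\subset\GSpin_5$.

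The missing ingredient is the Hiraga--Saito lifting. For $n=3$ and $n=4$ you cannot directly apply Ichino or Gan--Ichino to $\pi_4$ on $\GSpin_4$: both results are stated for automorphic representations of $\tilde G_4 = D^\times\times D^\times$ (or $\Res_{E/F}\GL_2$), not for the subgroup $\GSpin_4$ cut out by the $\det=\det$ (or $N\in F^\times$) condition. The paper invokes \cite[Thm.\ 4.13]{HS12} to produce an automorphic representation $\sigma_1\boxtimes\sigma_2$ (resp.\ $\tau$) of $\tilde G_4(\A)$ lying above $\pi_4$, and this induces two non-trivial bookkeeping tasks that feed directly into the constant $2^\beta$: (i) the Petersson pairing $\mathcal B_{\pi_4}$ must be related to the ambient pairing $\mathcal B_{\sigma_1\otimes\sigma_2}$ (resp.\ $\mathcal B_\tau$) via a constant $C=|\mathfrak{X}_\tau|\cdot\mathrm{Vol}(Z^\circ_4(\A)G_4(F)\backslash G_4(\A))/\mathrm{Vol}(Z^\circ_{\tilde G}(\A)\tilde G(F)\backslash\tilde G(\A))$ from \cite[Remark 4.20]{HS12}, and (ii) one must identify $|\mathfrak{X}_\tau|$ with the component group order $|\mathcal S_{\phi_4}|$ using \cite[\S 6]{LM15}. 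Your phrase ``after aligning conventions'' hides precisely where the factor of $|\mathfrak{X}_\tau|$ and the extra powers of $2$ enter. Separately, your description of the $\GSpin_4\hookrightarrow\GSpin_5\cong\GSp_4$ embedding as coming from ``the standard Levi $\GL_2\times\GL_2$ of a Siegel parabolic'' is incorrect: the Siegel Levi of $\GSp_4$ is $\GL_2\times\GL_1$, not $\GL_2\times\GL_2$. The relevant subgroup is the pointwise stabilizer of an anisotropic line in $V_5$, coming from the inclusion $V_4\subset V_5$, and is not a parabolic Levi. Finally, the $n=4$ verification holds only under strong hypotheses on $\pi_5$ (a cuspidal theta lift from $\GO(V)$ with $\dim V = 4$) and on $\pi_4$ (cuspidal base change $\tau_K$ and existence of a Jacquet--Langlands transfer), which your proposal does not flag; without those, the Gan--Ichino formula simply does not apply.
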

Here we are assuming the $L$-function $L(s, \pi_n \times \pi_{n+1} \otimes \omega^{-1})$ is holomorphic at $s=1/2$ and the the adjoint $L$-functions $L(s,\pi_{n+1},\Ad)$ and $L(s,\pi_{n+1},\Ad)$ are non-zero and holomorphic at $s=1$.
As discussed in the introduction, we also make the following conjecture regarding the constant $2^{\beta}$: 
\begin{conjecture}\label{conj2}
Let $\phi_{n+1}$ and $\phi_n$ be the (conjectural) global Langlands parameters of $\pi_{n+1}$ and $\pi_n$, respectively. If $\pi_{n+1}$ and $\pi_n$ appear with multiplicity one in the discrete spectrum, then 
$$2^{\beta}=4|\mathcal{S}_{\phi_n}||\mathcal{S}_{\phi_{n+1}}|=\dfrac{1}{2}|\mathcal{S}_{\phi_n,sc}||\mathcal{S}_{\phi_{n+1},sc}|.$$
\end{conjecture}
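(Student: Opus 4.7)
The plan is to verify the stated equality case by case, in parallel with the verification of Conjecture \ref{C:conj} for $n \in \{2,3,4\}$ that occupies the remainder of Section 4.

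The second equality $4|\mathcal{S}_{\phi_n}||\mathcal{S}_{\phi_{n+1}}| = \tfrac{1}{2}|\mathcal{S}_{\phi_n,sc}||\mathcal{S}_{\phi_{n+1},sc}|$ is essentially a restatement of the Corollary at the end of Section 2.4 and requires no additional argument. The underlying mechanism is that, by the Lemma preceding that Corollary, passage from $\mathcal{S}_{\phi}$ to $\mathcal{S}_{\phi,sc}$ multiplies the order by $2$ in odd rank and by $4$ in even rank. Since exactly one of $n$, $n+1$ is odd, the product always acquires a factor of $8$, which rearranges to the claimed identity.

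For the first equality $2^\beta = 4|\mathcal{S}_{\phi_n}||\mathcal{S}_{\phi_{n+1}}|$, I would proceed in four steps in each low-rank case. Step one: use the explicit isomorphisms of Section 2.2 to convert $\GSpin_n \subset \GSpin_{n+1}$ into an inclusion of familiar linear groups ($E^\times \subset \GL_2$ for $n=2$, $\GL_2 \subset \GL_2\times\GL_2$-style for $n=3$, and so on), and translate $\pi_n, \pi_{n+1}$ accordingly. Step two: identify $|\mathcal{P}_\omega(\phi,f)|^2$ with a classical period whose value is computed by Waldspurger's formula ($n=2$), Ichino's triple product formula ($n=3$), or the Gan--Ichino formula ($n=4$). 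Step three: match the classical formula with the right-hand side of Conjecture \ref{C:conj} factor by factor, using Section 2.3 for the $L$-functions and Proposition \ref{P:unramified_case} for the unramified local periods; the remaining numerical constant isolates $2^\beta$ as a concrete power of $2$. Step four: compute $|\mathcal{S}_{\phi_n}|$ and $|\mathcal{S}_{\phi_{n+1}}|$ using Lemma \ref{component_group_abelian_2_group}, so that each is $2^{k_i}$ where $k_i$ is the number of irreducible summands in the standard representation of the parameter, and compare with the value of $2^\beta$ produced in step three.

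The main obstacle will be step two, namely the careful bookkeeping of Tamagawa measure normalizations, Petersson pairings, and the factorization of local and global periods, so that the constant $2^\beta$ emerges cleanly. This is especially delicate in the $n=4$ case where the parameter may be endoscopic and where $|\mathcal{S}_{\phi_n}|$ and $|\mathcal{S}_{\phi_{n+1}}|$ genuinely depend on the decomposition type, so that $k_n, k_{n+1}$ must be tracked through the Gan--Ichino formula. Beyond these low-rank cases no elementary strategy is available, and a conceptual proof for general $n$ would presumably require the Arthur multiplicity formula for $\GSpin$, which is not yet available in the literature; hence the statement remains a conjecture rather than a theorem.
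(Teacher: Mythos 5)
Your proposal matches the paper's approach essentially exactly: the second equality is deduced immediately from the Corollary at the end of Section 2.4 (the factor $2$ versus $4$ depending on parity, giving $8$ in the product), while the first equality $2^\beta = 4|\mathcal{S}_{\phi_n}||\mathcal{S}_{\phi_{n+1}}|$ is only verified for $n=2,3,4$ by translating to the Waldspurger, Ichino triple product, and Gan--Ichino formulas respectively and isolating the numerical constant, with the component groups computed via Lemma \ref{component_group_abelian_2_group} and the cited results of Roberts and Lapid--Mao. You also correctly observe that no general proof exists (absent an Arthur-type multiplicity formula for $\GSpin$), which is precisely why the paper labels this a conjecture rather than a theorem.
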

Now let us note the relation between our conjecture and that of Ichino-Ikeda. Assume $\pi_n$ and $\pi_{n+1}$ both have the trivial central character, so that we can choose $\omega=\mathbf{1}$. Then $\pi_n$ and $\pi_{n+1}$ can be viewed as automorphic representations of $\SO_n(\A)$ and $\SO_{n+1}(\A)$. (Conjecturally, this means that if $\phi_n$ is the global $L$-parameter of $\pi_n$ then the image of $\phi_n$ is already in $\Sp_{2m}(\C)$ or $\operatorname{O}_{2m}(\C)$ depending on the parity of $n$, and similarly for $\pi_{n+1}$.) Then one can see that the tensor product $L$-function $L(s, \pi_n\times\pi_{n+1}\otimes\mathbf{1})$ as the $L$-function for $\GSpin$ is equal to the tensor product $L$-function $L(s, \pi_n\times\pi_{n+1})$ as the $L$-function for $\SO$, and similarly for the adjoint $L$-functions. Hence in this case, our conjecture is precisely that of Ichino-Ikeda. In this sense, our conjecture should be interpreted as a generalization of that of Ichino-Ikeda instead of an analogue of it.

\subsection{Conjecture for $\GSpin_2 \times \GSpin_3$ (Waldspurger Formula Case)}
Let us consider the lowest rank case, so we let $V_2$ and $V_3$ be quadratic spaces of dimensions 2 and 3. Note then that $V_2=E$, where $E$ is a quadratic extension of $F$ equipped with the norm form or $V_2=\mathbb{H}$ (hyperbolic plane), and there exists $V_3=D_0$, where $D_0$ is the set of trace zero elements of a (not necessarily division) quaternion algebra $D$ equipped with the norm form. Recall in Section \ref{iso} we have computed
\begin{align*}
\GSpin_2&=\GSpin(V_2)
=\begin{cases} \Res_{E/F} \GL_1 & \text { if $V_2=E$};\\
\GL_1 \times \GL_1 & \text  { if $V_2=\mathbb{H}$};
\end{cases}\\
{\GSpin}_3&=\GSpin(V_3) = D^{\times}.
\end{align*}

In this subsection, we consider the case $V_2=E$, and assume $D$ is such that we have an inclusion $E\subseteq D_0$, which gives the inclusion $\Res_{E/F} \GL_1\subseteq D^\times$. This is essentially the case treated by Waldspurger in \cite{Wal85} and the resulting formula is normally known as the Waldspurger formula.

So we let $\pi_2$ be a cuspidal automorphic representation on $\GSpin_2(\mathbb{A})=\mathbb{A}^\times_E$, namely a Hecke character $\chi$ on $\mathbb{A}^\times_E$ and let $\pi_3$ be a tempered cuspidal automorphic representation of $D^\times(\mathbb{A})$ such that there exists a Hecke character $\omega$ on $\mathbb{A}$ with
\[
\omega^2=\omega_{\pi_3}\chi|_{{\mathbb{A}^{\times}_F}}.
\]
Consider $\pi_3\otimes\omega^{-1}$, which is an automorphic representation of $D^\times(\A)$ with the central character $\omega_{\pi_3 \otimes \omega^{-1}}=\omega^{-2}\omega_{\pi_3}$, so that
\[
\omega_{\pi_3 \otimes \omega^{-1}}\cdot\chi|_{\mathbb{A}^\times}=1.
\]
Then for each $\phi\in V_{\pi_3}$ and $\chi\in V_{\chi}$, our period integral is
\[
\mathcal{P}_\omega(\phi, \chi)
=\int_{E^\times\backslash\A^\times_E\slash\A^\times}\phi(g)f(g)\omega(N(g))^{-1}\,dg
=\int_{E^\times\backslash\A^\times_E\slash\A^\times}(\phi\cdot\omega^{-1})(g)\chi(g)\,dg,
\]
which is nothing but the period integral considered by Waldspurger for $\pi_3\otimes\omega^{-1}$ and $\chi$. Hence by using the Waldspurger formula, we obtain
\begin{align*}
|\mathcal{P}_{\omega}(\phi, \chi)|^2&=\frac{\zeta(2)L(1/2, BC(\pi_3 \otimes \omega^{-1}) \otimes \chi)}{4L(1, \mu)^2L(1, \pi_3 \otimes \omega^{-1}, \Ad)}\prod_v \alpha_{\omega_v}(\phi_v, \chi_v)\\
&=\frac{\zeta(2)L(1/2, BC(\pi_3 \otimes \omega^{-1}) \otimes \chi)}{4L(1, \chi,\Ad)L(1, \pi_3 , \Ad)}\prod_v\alpha_{\omega_v}(\phi_v, \chi_v) \\
&=\frac{\zeta(2)L(s,\pi_3 \otimes \omega^{-1}\times \chi)}{4L(1, \chi,\Ad)L(1, \pi_3 , \Ad)}\prod_v\alpha_{\omega_v}(\phi_v, \chi_v),
\end{align*}
where for the first equality we used the Waldspurger formula with $\mu$ the quadratic character for the extension $E/F$, and for the last equality we used \cite[pg. 102]{Bump}. This confirms Conjecture \ref{C:conj}. Moreover, $|\mathcal{S}_{\phi_2}|=|\mathcal{S}_{\phi_3}|=1$, so $2^{\beta}=4|\mathcal{S}_{\phi_2}||\mathcal{S}_{\phi_3}|$, confirming Conjecture \ref{C:conj2}.

\begin{remark}
Waldspurger assumed the central character of $\pi$ is trivial, but the authors of \cite{YZZ13} removed this condition. Also the Waldspurger formula we used in the above looks slightly different from the original in \cite{Wal85} or from the Waldspurger formula listed in \cite[Theorem 1.4 in Section 1.4.2]{YZZ13}. This is due to the following: Waldspurger chose the global Haar measure used in the period integral such that the $Vol(E^{\times}\backslash \mathbb{A}^{\times}_E/\mathbb{A}^{\times})=2L(1,\mu)$.  The  authors of \cite{YZZ13} chose the global Haar measure such that the volume is 1.  Then each chose local measures to be compatible with their choice of global measure.  With our choice of measures, our formulation is equivalent.
\end{remark}

\subsection{Conjecture for $\GSpin_2\times\GSpin_3$ (Jacquet-Langlands Case)}\label{JLC}
Next consider the case $V_2=\mathbb{H}$, so that $\GSpin_2=\GL_1\times\GL_1$. In this case we have an embedding $\GSpin_2\subseteq\GSpin_3$ only when $\GSpin_3=\GL_2$; namely $D$ in the previous subsection is split.

Before moving on, let us mention that this case is actually excluded from our conjecture in the first place. Indeed, as we will see, even though we can obtain a similar formula by using the well-known Jacquet-Langlands theory, the resulting formula is not exactly as in our conjecture. We consider this case merely as a low rank exception.

Now let $\pi_2$ be a tempered cuspidal automorphic representation of $\GL_1(\A) \times \GL_1(\A)$, so that we have $\pi_2=\chi_1 \boxtimes\chi_2$ where $\chi_1$, $\chi_2$ are both unitary Hecke characters of $\A^\times$, and let $\pi_3$ be a tempered cuspidal automorphic representation of $\GL_2(\A)$ with central character $\omega_{\pi_3}$.  By our assumption, there exists a Hecke character $\omega$ such that
\[
\omega^2=\chi_1\chi_2\omega_{\pi_3}.
\]
\begin{proposition}\label{P:Jacquet-Langlands case}
Keep the above notation and assumption.  Then for $\phi\in V_{\pi_3}$ and $\chi_1 \boxtimes \chi_2 \in V_{\pi_2}$, we have
\[
|\mathcal{P}_\omega(\phi,\chi_1 \boxtimes \chi_2)|^2=\frac{\zeta(2)L(1/2,\pi_3  \times \pi_2  \otimes \omega^{-1})}{2|\mathcal{S}_{\phi_2}||\mathcal{S}_{\phi_3}|L(1,\pi_3,\Ad)}\displaystyle\prod_{v }\alpha_{\omega_v}(\phi_v, \chi_{1,v} \boxtimes \chi_{2,v}),
\]
\end{proposition}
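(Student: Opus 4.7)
The plan is to reduce $\mathcal{P}_\omega(\phi, \chi_1 \boxtimes \chi_2)$ to a classical Hecke--Jacquet--Langlands zeta integral on $\GL_2$ and then take absolute squares.

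By Section \ref{iso}, $\GSpin_2 = \GL_1\times\GL_1$ embeds into $\GSpin_3 = \GL_2$ via $(a,b)\mapsto \operatorname{diag}(a,b)$, the spinor norm restricts to $N(\operatorname{diag}(a,b)) = ab = \det$, and $Z_2^\circ = \Delta\GL_1$. The quotient $Z_2^\circ(\A)(F^\times\times F^\times)\backslash(\A^\times\times\A^\times)$ is then parametrised by $y = a/b$ in $F^\times\backslash\A^\times$. Setting $\tilde\pi_3 := \pi_3\otimes(\omega^{-1}\circ\det)$ and $\tilde\phi := (\omega^{-1}\circ\det)\cdot\phi$, the hypothesis $\omega^2 = \chi_1\chi_2\omega_{\pi_3}$ gives $\omega_{\tilde\pi_3} = (\chi_1\chi_2)^{-1}$; substituting $(a,b) = (yt,t)$ and collapsing the $t$-integral via the central character leaves
$$\mathcal{P}_\omega(\phi,\chi_1\boxtimes\chi_2) = \int_{F^\times\backslash\A^\times}\tilde\phi\!\begin{pmatrix} y & \\ & 1 \end{pmatrix}\chi_1(y)\,dy.$$

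This is the Hecke--Jacquet--Langlands global zeta integral $Z(1/2,\tilde\phi,\chi_1)$, whose Whittaker unfolding yields $\prod_v\Psi_v(1/2,W_v,\chi_{1,v})$; the unramified places collectively give $L(1/2, \pi_3 \otimes \chi_1 \otimes \omega^{-1})$. Taking the complex conjugate and using that $\overline{\tilde\phi}$ lies in the contragredient $\tilde\pi_3^\vee \cong \pi_3\otimes\omega^{-1}\chi_1\chi_2$ (the identity $\tau^\vee \cong \tau\otimes\omega_\tau^{-1}$ holds on $\GL_2$), the conjugate period computes $L(1/2, \pi_3\otimes\chi_2\otimes\omega^{-1})$. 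Multiplying gives
$$|\mathcal{P}_\omega(\phi,\chi_1\boxtimes\chi_2)|^2 \;\sim\; L(1/2,\pi_3\otimes\chi_1\otimes\omega^{-1})\cdot L(1/2,\pi_3\otimes\chi_2\otimes\omega^{-1}) = L(1/2,\pi_3\times\pi_2\otimes\omega^{-1}),$$
the factorisation reflecting $std_3\otimes std_2 = std_3\otimes(\chi_1\oplus\chi_2)$.

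The local correction factors $|\Psi_v|^2/L_v(1/2,\cdots)$ are then identified with $\alpha_{\omega_v}(\phi_v,\chi_{1,v}\boxtimes\chi_{2,v})$: the unramified identification is Proposition \ref{P:unramified_case}, and at general places it follows from the standard dictionary between the Hecke zeta integral and the matrix coefficient integral on $\GL_2$ via Schur orthogonality and the Kirillov model. The global constant $\zeta(2) = \Delta_{\SO_3}$ comes from the Tamagawa measure; $L(1,\pi_3,\Ad)$ (the symmetric-square $L$-value for $\GL_2$) arises from the Petersson normalisation of the matrix coefficient; and since $\pi_2$ is an abelian character and $\pi_3$ is a tempered $\GL_2$ cusp form, both component groups are trivial, giving $2|\mathcal{S}_{\phi_2}||\mathcal{S}_{\phi_3}| = 2$. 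The main obstacle is this final bookkeeping: reconciling the Tamagawa measure on $F^\times\backslash\A^\times$ with the local $dg_v$ conventions entering $\alpha_{\omega_v}^\natural$, and in particular accounting for the factor $2$ rather than the $4$ predicted by the general conjecture -- which is precisely why this split case is flagged as a low-rank exception outside the main conjecture.
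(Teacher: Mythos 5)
Your proposal is correct and follows essentially the same route as the paper's (which sketches the argument and defers the details, citing Jacquet--Langlands theory and Waldspurger's Proposition~6, p.~208): you reduce the period to a pair of Hecke--Jacquet--Langlands zeta integrals by the diagonal substitution $(a,b)=(yt,t)$ and the central-character cancellation, factor the tensor $L$-function as $L(s,\pi_3\otimes\chi_1\omega^{-1})L(s,\pi_3\otimes\chi_2\omega^{-1})$, and match local factors with $\alpha_{\omega_v}$. The one place where you are vaguer than the paper's citation is the Petersson normalisation step --- the factor $2\zeta(2)^{-1}L(1,\pi_3,\Ad)$ relating the global Petersson inner product to the product of local matrix-coefficient pairings is precisely the content of Waldspurger's Proposition~6 that the paper invokes, and a complete write-up should make that reference explicit rather than appeal to ``Schur orthogonality and the Kirillov model.''
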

\begin{proof} This is a standard exercise using the well-known Jacquet-Langlands theory as well as  \cite[Proposition 6, pg. 208]{Wal85}
and that $|\mathcal{S}_{\phi_2}|=|\mathcal{S}_{\phi_3}|=1$.  The details are are left to the reader. 
\end{proof}
Note that Proposition \ref{P:Jacquet-Langlands case} is similar to Conjectures \ref{C:conj} and \ref{C:conj2}.

\subsection{Conjecture for $\GSpin_4 \times \GSpin_3$ (Triple Product Formula)}
In this section we prove the conjecture for $\GSpin_4$ and $\GSpin_3$, which essentially boils down to Ichino's triple product formula. So we let $V_3$ and $V_4$ be quadratic spaces of dimension three and four, respectively and write $G_3=\GSpin(V_3)$ and $G_4=\GSpin(V_4)$. Then there exists a (not necessarily division) quaternion algebra $D$ such that 
\begin{align*}
G_3(\A)&=D^\times(\A);\\
G_4(\A)&=\begin{cases}\{(g_1,g_2) \in D^{\times}(\A) \times D^{\times}(\A): N_D(g_1)=N_D(g_2) \in \A^{\times}\};\\
\{g\in D^\times(\A_E) : N(g)\in \A^\times\},
\end{cases}
\end{align*}
where for $G_4$ the first is the case if $\disc(V_4)=1$ and the second is the case if $\disc(V_4)\neq 1$. Here, to be more precise, we are assuming that $V_3$ and $V_4$ are such that the corresponding $G_3$ and $G_4$ are as above with the same $D$, so that we have the inclusion $G_3(\A)\subseteq G_4(\A)$. 

To utilize Ichino's triple product formula, we need to introduce the group
\[
\tilde{G}_4(\A)= \begin{cases} D^{\times}(\mathbb{A}) \times D^{\times}(\mathbb{A}) & \text { if $\disc V_4=1$};\\
D^{\times}(\mathbb{A}_E)& \text { if $\disc V_4\neq 1$},
\end{cases}
\]
so that we have
\[
G_3(\A)\subseteq G_4(\A)\subseteq\tilde{G}_4(\A).
\]

Now let $\pi_i$ be a tempered cuspidal automorphic representation of $G_i$ for $i=3, 4$ such that there exists a Hecke character $\omega$ with $\omega^2=\omega_{\pi_4}\omega_{\pi_3}$.

First, we assume $\disc V_4=1$. To use Ichino's  triple product formula, we need to relate $\pi_4$ with an automorphic representation of $\tilde{G}_4(\A)=D^\times(\A)\times D^\times(\A)$ as follows. By \cite[Thm. 4.13]{HS12}, there exists an irreducible cuspidal automorphic representation $\sigma_1 \boxtimes \sigma_2$ of $\tilde{G}(\mathbb{A})=D^\times(\A)\times D^\times(\A)$ on the space $V_{\sigma_1 \boxtimes \sigma_2}$ such that $V_{\pi_4} \subseteq V_{\sigma_1 \boxtimes \sigma_2}^1 |_{G_4(\A)}$ and $\sigma_1 \boxtimes \sigma_2 \|_{\G_4(\A)}=\pi_4$, where $V_{\sigma_1 \otimes \sigma_2}^1$ is the subspace of $V_{\sigma_1 \boxtimes \sigma_2}$ on which the group
$$\mathfrak{X}_{\sigma_1 \boxtimes \sigma_2}=\{ \gamma \in (Z^{\circ}_{\tilde{G}}(\mathbb{A})G_4(\mathbb{A})\tilde{G}(F)\backslash \tilde{G}(\mathbb{A}))^D:(\sigma_1\boxtimes \sigma_2) \otimes \gamma \cong \sigma_1 \boxtimes \sigma_2\}$$
acts trivially, here the superscript $D$ in the above set indicates Pontryagin dual; namely $\sigma_1\boxtimes\sigma_2$ is an automorphic representation which ``lies above $\pi_4$". (Recall that the $\|$ notation is defined in the notation section.) Moreover,  Let $\chi_1,\chi_2$ be the central characters of $\sigma_1 ,\sigma_2$, respectively.  Then $(\chi_1 \boxtimes \chi_2)|_{Z^{\circ}_4} = \chi_1\chi_2=\omega_{\pi_4}$. Since $\omega_{\pi_4}\omega_{\pi_3}=\omega^2$, we have
$\chi_1\chi_2\omega_{\pi_3}=\omega^2$.

Now, let $\phi\in V_{\pi_4}$ and $f\in V_{\pi_3}$. Since $V_{\pi_4}\subseteq V_{\sigma_1\boxtimes\sigma_2}|_{G_4(\A)}$, we may assume $\phi=(\phi_1\otimes\phi_2)|_{G_4(\A)}$ for some $\phi_i\in V_{\sigma_i}$. Then our period integral is of the form
\begin{align*}
\mathcal{P}_{\omega}(\phi, f)
&=\int_{Z_3^\circ(\A) G_3(F)\backslash G_3(\A)}\phi(g)f(g)\omega(N(g))^{-1}\,dg\\
&=\int_{Z_3^\circ(\A) G_3(F)\backslash G_3(\A)}\phi_1(g)\phi_2(g)(f\cdot\omega^{-1})(g)\,dg
\end{align*}
because $\phi(g)=\phi_1(g)\phi_2(g)$ for $g\in G_3(\A)$, which is precisely the triple product integral considered by Ichino for the automorphic representation $\sigma_1 \boxtimes \sigma_2 \boxtimes (\pi_3 \otimes \omega^{-1})$ of $D^\times(\A)\times D^\times(\A)\times D^\times(\A)$.  

However, the local integral that Ichino considers is different from our local integral.  For $\phi_{1,v},\phi_{2,v}$ in $\pi_{4,v}$ and $f_{1,v},f_{2,v}$ in $\pi_{3,v}$ our local integral is of the form 
\begin{align*}
&\alpha^{\natural}_{\omega_v}(\phi_{1,v},\phi_{2,v};f_{1,v},f_{2,v})\\
&=\int\limits_{Z_3^\circ(F_v)\backslash\G_3(F_v)}\mathcal{B}_{\pi_{4,v}}(\pi_{4,v}(g_{v})\phi_{1,v},\phi_{2,v})
\mathcal{B}_{\pi_{3,v}}(\pi_{3,v}(g_{v})f_{1,v},f_{2,v})\omega_v(N(g_v))^{-1}dg_{v}\\
&=
\int\limits_{Z_3^\circ(F_v)\backslash\G_3(F_v)}\mathcal{B}_{\pi_{4,v}}((\sigma_1 \otimes \sigma_2)_v(g_{v})\phi_{1,v},\phi_{2,v})
\mathcal{B}_{\pi_{3,v}}((\pi_{3,v}\cdot \omega_v^{-1})(g_{v})f_{1,v},f_{2,v})dg_{v}\\
&
\end{align*} because for $\sigma_1 \boxtimes \sigma_2 \|_{\G_4(\A)}=\pi_4$ we have that $\pi_{4,v} \subseteq (\sigma_{1}\otimes \sigma_{2})_v \|_{\G_{4}(F_v)}$. 

On the other hand, the local integral considered by Ichino is \begin{align*}
I_v(\phi_{1,v},\phi_{2,v};f_{1,v},f_{2,v})&=\int\limits_{Z_3^\circ(F_v)\backslash\G_3(F_v)}\mathcal{B}_{(\sigma_1 \otimes \sigma_2)_v}((\sigma_1 \otimes \sigma_2)_v(g_{v})\phi_{1,v},\phi_{2,v})
\mathcal{B}_{\pi_{3,v}}(\pi_{3,v}(g_{v})f_{1,v},f_{2,v})dg_v
\end{align*}
so we need to relate $\mathcal{B}_{(\sigma_1 \otimes \sigma_2)_v}$ with $\mathcal{B}_{\pi_{4,v}}$
Since $\pi_{4,v} \subseteq (\sigma_1 \otimes \sigma_2)_v$ for all $v$ we choose
$\mathcal{B}_{\pi_{4,v}}=\mathcal{B}_{(\sigma_1 \otimes \sigma_2)_v}|_{\pi_{4,v} \times \pi_{4,v}}$ for all $v$ except one.  We pick one place $v_o$ and set $\mathcal{B}_{\pi_{4,v_0}}=\mathcal{B}_{(\sigma_1 \otimes \sigma_2)_v}|_{\pi_{4,v} \times \pi_{4,v}}\cdot C$ for some constant $C$ which gives $\displaystyle\prod_v \mathcal{B}_{\pi_{4,v}}= \displaystyle\prod_v \mathcal{B}_{(\sigma_1 \otimes \sigma_2)_v} \cdot C $ so that $\mathcal{B}_{\pi_4}=C \cdot \mathcal{B}_{(\sigma_1 \otimes \sigma_2)}$. This constant $C$ which relates $\mathcal{B}_{(\sigma_1 \otimes \sigma_2)}$ with $\mathcal{B}_{\pi_4}$is given by Hirago-Sato in  \cite[Remark 4.20]{{HS12}}; namely,
\[
C=|\mathfrak{X}_{\sigma_1 \otimes \sigma_2}| \dfrac{Vol(Z^{\circ}_{4}(\mathbb{A})\G_4(F)\backslash \G_4 (\mathbb{A}))}{Vol(Z^{\circ}_{\tilde{G}}(\mathbb{A})\tilde{G}(F)\backslash \tilde{G}(\mathbb{A}))}.
\] 
Noting that $Z^{\circ}_{4}(\mathbb{A}){\G}_4(F)\backslash {\G}_4 (\mathbb{A})\cong \SO(F)\backslash{\SO}_4(\A)$ and $Z^{\circ}_{\tilde{G}}(\mathbb{A})\tilde{G}(F)\backslash \tilde{G}(\mathbb{A})\cong\SO_3(F)\backslash{\SO}_3(\A) \times {\SO}_3(F)\backslash\SO_3(\A)$, we have
\[
Vol(Z^{\circ}_{{\G}_4}(\mathbb{A}){\G}_4(F)\backslash {\G}_4 (\mathbb{A}))=2
\qand
Vol(Z^{\circ}_{\tilde{G}}(\mathbb{A})\tilde{G}(F)\backslash \tilde{G}(\mathbb{A}))=4,
\]
so that $C=\dfrac{1}{2}|\mathfrak{X}_{\sigma_1 \otimes \sigma_2}|$ and normalizing  $\alpha_{\omega_v}^{\sharp}$ as in the introduction gives
\[\displaystyle\prod_v \alpha_{\omega_v}=\dfrac{2}{|\mathfrak{X}_{\sigma_1\otimes\sigma_2}|} \cdot \displaystyle\prod_v I_v.\]
Thus,  Ichino's triple product formula (\cite[Theorem 1.1]{Ich08}) applied to  $\sigma_1 \boxtimes \sigma_2 \boxtimes (\pi_3 \otimes \omega^{-1})$ with $c=3$ gives
\begin{align*}
|\mathcal{P}_{\omega}(\phi, f)|^2 &=\dfrac{2}{|\mathfrak{X}_{\sigma_1 \otimes \sigma_2}|}\cdot \dfrac{\zeta(2)L(1/2, (\sigma_1 \otimes \sigma_2) \boxtimes (\pi_3 \otimes \omega^{-1}))}{8 L(1,\sigma_1 \otimes \sigma_2,\Ad)L(1,\pi_3,Ad)}\prod_v \alpha_{\omega_v}(\phi_v,f_v)\\
&=\dfrac{\Delta_{G_4}L(1/2, (\sigma_1 \otimes \sigma_2) \boxtimes (\pi_3 \otimes \omega^{-1}))}{4|\mathfrak{X}_{\sigma_1 \otimes \sigma_2}| L(1,\sigma_1 \otimes \sigma_2,\Ad)L(1,\pi_3,Ad)}\prod_v \alpha_{\omega_v}(\phi_v,f_v)\\
&=\dfrac{\Delta_{G_4}L(1/2, \pi_4 \times \pi_3 \otimes \omega^{-1})}{ 4|\mathfrak{X}_{\sigma_1 \otimes \sigma_2}|L(1,\pi_4 , \Ad)L(1,\pi_3,  \Ad)}\displaystyle\prod_v \alpha_{\omega_v}(\phi_v,f_v).
\end{align*}

Also we have that $|\mathcal{S}_{\phi_3}| = 1$ and $|\mathcal{S}_{\phi_4}| = |\mathfrak{X}_{\sigma_1 \otimes \sigma_2}|$ by \cite[Section 6.3]{LM15}. Hence, Conjectures \ref{C:conj} and \ref{C:conj2} hold.

Next assume $\disc V_4\neq 1$. Using  \cite[Thm. 4.13]{{HS12}} again, there exists an irreducible cuspidal automorphic representation $\tau$ of $\tilde{G}(\mathbb{A})=\GL_2(\A_E)$ on the space $V_{\tau}$ such that $V_{\pi_4} \subseteq V_{\tau}^1 |_{G_4}$ and $\tau \|_{\G_4}=\pi_4$; namely $\tau$ ``lies above $\pi_4$. Moreover, $V_{\tau}^1$ is the subspace of $V_{\tau}$ on which the group
$$\mathfrak{X}_{\tau}=\{ \gamma \in (Z^{\circ}_{\tilde{G}}(\mathbb{A})G_4(\mathbb{A})\tilde{G}(F)\backslash \tilde{G}(\mathbb{A}))^D:\tau \otimes \gamma \cong \tau\}$$
acts trivially. Note that $\omega_\tau|_{Z^\circ_4(\A)}=\omega_{\pi_4}$, and since $\omega_{\pi_4}\omega_{\pi_3}=\omega^2$ we have $\omega_\tau|_{Z^\circ_4(\A)}\omega_{\pi_3}=\omega^2$. Now let $\phi\in V_{\pi_4}$ and $f\in V_{\pi_3}$. Since $V_{\pi_4}\subseteq V_\tau|_{G_4(\A)}$, we can write $\phi=\tilde{\phi}|_{G_4(\A)}$ for some $\tilde{\phi}\in V_{\tau}$. Then our period integral is of the form
\begin{align*}
\mathcal{P}_{\omega}(\phi, f)
&=\int_{Z_4^\circ(\A) G_3(F)\backslash G_3(\A)}\phi(g)f(g)\omega(N(g))^{-1}\,dg\\
&=\int_{Z_4^\circ(\A) G_3(F)\backslash G_3(\A)}\tilde{\phi}(g)(f\cdot\omega^{-1})(g)\,dg
\end{align*}
because $\phi(g)=\tilde{\phi}(g)$ for $g\in G_3(\A)$, which is precisely the period integral considered by Ichino for the automorphic representation $\tau\boxtimes(\pi_3\otimes\omega^{-1})$ of $D^\times(\A_E)\times D^\times(\A)$. 

However, again the local integral that Ichino considers is different than our local integral.  For $\phi_{1,v},\phi_{2,v}$ in $\pi_{4,v}$ and $f_{1,v},f_{2,v}$ in $\pi_{3,v}$ our local integral is of the form 
\begin{align*}
&\alpha^{\natural}_{\omega_v}(\phi_{1,v},\phi_{2,v};f_{1,v},f_{2,v})\\
&=\int\limits_{Z_3^\circ(F_v)\backslash\G_3(F_v)}\mathcal{B}_{\pi_{4,v}}(\pi_{4,v}(g_{v})\phi_{1,v},\phi_{2,v})
\mathcal{B}_{\pi_{3,v}}(\pi_{3,v}(g_{v})f_{1,v},f_{2,v})\omega_v(N(g_v))^{-1}dg_{v}\\
&=
\int\limits_{Z_3^\circ(F_v)\backslash\G_3(F_v)}\mathcal{B}_{\pi_{4,v}}(\tau_v(g_{v})\phi_{1,v},\phi_{2,v})
\mathcal{B}_{\pi_{3,v}}((\pi_{3,v}\cdot \omega_v^{-1})(g_{v})f_{1,v},f_{2,v})dg_{v}\\
&
\end{align*} because for $\tau \|_{\G_4(\A)}=\pi_4$ we have that $\pi_{4,v} \subseteq \tau_v \|_{\G_{4}(F_v)}$. 

On the other hand, the local integral considered by Ichino is \begin{align*}
I_v(\phi_{1,v},\phi_{2,v};f_{1,v},f_{2,v})&=\int\limits_{Z_3^\circ(F_v)\backslash\G_3(F_v)}\mathcal{B}_{\tau_v}(\tau_v(g_{v})\phi_{1,v},\phi_{2,v})
\mathcal{B}_{\pi_{3,v}}(\pi_{3,v}(g_{v})f_{1,v},f_{2,v})dg_v,
\end{align*}
so we need to relate $\mathcal{B}_{\tau_v}$ with $\mathcal{B}_{\pi_{4,v}}$. 
Since $\pi_{4,v} \subseteq \tau_v$ for all $v$ we choose
$\mathcal{B}_{\pi_{4,v}}=\mathcal{B}_{\tau_v}|_{\pi_{4,v} \times \pi_{4,v}}$ for all $v$ except one and pick one place $v_o$ and set $\mathcal{B}_{\pi_{4,v_0}}=\mathcal{B}_{\tau_v}|_{\pi_{4,v} \times \pi_{4,v}}\cdot C'$ for some constant $C'$ which gives $\displaystyle\prod_v \mathcal{B}_{\pi_{4,v}}= \displaystyle\prod_v \mathcal{B}_{\tau_v} \cdot C'$ and so $\mathcal{B}_{\pi_4}=C'\cdot \mathcal{B}_{\tau}$ where $C'$ is given by Hirago-Sato in  \cite[Remark 4.20]{{HS12}};namely,
\[
C'=|\mathfrak{X}_{\tau}| \dfrac{Vol(Z^{\circ}_{4}(\mathbb{A})\G_4(F)\backslash \G_4 (\mathbb{A}))}{Vol(Z^{\circ}_{\tilde{G}}(\mathbb{A})\tilde{G}(F)\backslash \tilde{G}(\mathbb{A}))}
\] 
where the volumes are 
$$Vol(Z^{\circ}_{G_4}(\mathbb{A})G_4(F)\backslash G_4 (\mathbb{A}_F))=Vol(Z^{\circ}_{\tilde{G}}(\mathbb{A})\tilde{G}(F)\backslash \tilde{G}(\mathbb{A_F}))=2,$$
and so $C'=|\mathfrak{X}_{\tau}|$ in this case.

Then Ichino's triple product formula applied to $\tau\boxtimes(\pi_3\otimes\omega^{-1})$ with $c=2$ gives
\begin{align*}
|\mathcal{P}_\omega(\phi,f)|^2&=\dfrac{1}{\mathfrak{X}_{\tau}}\cdot\dfrac{\zeta(2)L(1/2, \tau \boxtimes (\pi_3 \otimes \omega^{-1}))}{4 L(1,\tau,\Ad)L(1,\pi_3,Ad)}\displaystyle\prod_v \alpha_{\omega_v}(\phi_v,f_v)\\
&=\dfrac{\Delta_{G_4}L(1/2, \tau \boxtimes (\pi_3 \otimes \omega^{-1}))}{4|\mathfrak{X}_{\tau}| L(1,\tau,\Ad)L(1,\pi_3,Ad)}\displaystyle\prod_v \alpha_{\omega_v}(\phi_v,f_v)\\
&=\dfrac{\Delta_{G_4}L(1/2, \pi_4 \times \pi_3 \otimes \omega^{-1})}{ 4|\mathfrak{X}_{\tau}|L(1,\pi_4 , \Ad)L(1,\pi_3,  \Ad)}\displaystyle\prod_v \alpha_{\omega_v}(\phi_v,f_v).
\end{align*}
Moreover, $|\mathcal{S}_{\phi_3}|=1$ and $|\mathcal{S}_{\phi_4}|=|\mathfrak{X}_{\tau}|$ by \cite[Section 6.4]{LM15}.  Hence, Conjectures \ref{C:conj} and \ref{C:conj2} hold for this case, too.

\begin{remark}
In Ichino's triple product formula, the two cases $d=1$ and $d\neq 1$ are different. However, once we consider Ichino's formula as an instance of the GGP conjecture for $\GSpin$, these two cases can be considered as one case as above.
\end{remark}

\subsection{Conjecture for $\GSpin_5 \times \GSpin_4$ (Gan-Ichino formula)}

Finally, we consider the conjecture for $n=5$. For this case, we will not be able to prove the conjecture in full generality but only for some special cases as we will explain in what follows.

Firstly we consider only the case where $\GSpin_5$ is split and $\GSpin_4$ is quasi-split, namely
\begin{align*}
\GSpin_4&=
\begin{cases}
\{(g_1,g_2)\in\GL_1\times\GL_1 : \det g_1=\det g_2\};\\
\{g\in\Res_{E/F}\GL_2 : N(g)\in\GL_1\},
\end{cases}\\
\GSpin_5&=\GSp_4.
\end{align*}
Or equivalently, $\GSpin_4=\GSpin(V_4)$ with $V_4=\mathbb{H}\oplus \mathbb{H}$ or $V_4=\mathbb{H}\oplus E$ for a quadratic extension $E$ of $F$ equipped with the norm form, and $\GSpin_5=\GSpin(V_5)$ with $\mathbb{H} \oplus \mathbb{H} \oplus \langle 1 \rangle$. Let us note that we have the natural inclusion $V_4\subseteq V_5$ of the quadratic forms, which gives rise to the natural inclusion $\GSpin_4\subseteq\GSpin_5$.

Secondly, for our (tempered) cuspidal automorphic representations $\pi_5$ and $\pi_4$ of $\GSpin_5(\A)$ and $\GSpin_4(\A)$, respectively, we only consider the following special cases. For $\pi_5$, we assume that $\pi_5$ is the theta lift of a cuspidal automorphic representation $\sigma$ of $\GO(V)(\A)$, where $V$ is a 4 dimensional quadratic space. (Let us note that $\sigma$ has to satisfy certain technical conditions such that the theta lift to $\GSp_4(\A)$ is nonzero and cuspidal. See \cite[p.236]{GI11} for the detail.) Here, let us denote the discriminant algebra of $V$ by $K$, which is the \'etale quadratic algebra over $F$ defined by
\[
K=\begin{cases} F \times F & \text {if $\disc(V)=1$};\\
 F(\sqrt{\disc(V)})& \text {if $\disc(V)\neq1$}.
 \end{cases}
\]
As for $\pi_4$, let $\tau$ be a cuspidal automorphic representation which ``lies above $\pi_4$" as in the previous subsection. To be precise, let
\[
\tilde{G}_4:=
\begin{cases}
\GL_2\times\GL_2,&\text{if $V_4=\mathbb{H}\oplus\mathbb{H}$};\\
\Res_{E/F}\GL_2,&\text{if $V_4=\mathbb{H}\oplus E$}.
\end{cases}
\]
By Theorem 4.13 of \cite{{HS12}}, there exists an irreducible unitary cuspidal automorphic representation $\tau$ of $\widetilde{G_4}(\A)$ such that $\tau\|_{\G_4}=\pi_4$ and $V_{\pi_4} \subset V^1_{\tau}|_{G_4}$, where $V^1_{\tau}$ is the subspace of $V_{\tau}$ such that
$$\mathfrak{X}_{\tau} = \{\gamma \in (Z^{\circ}_{\widetilde{G}}(\mathbb{A})G_4(\mathbb{A})\widetilde{G_4}(F) \backslash \widetilde {G_4}(\mathbb{A})^D : \tau \otimes \gamma \cong \tau \}$$
acts trivially.  Then we assume that the base change $\tau_K$ of $\tau$ to $\widetilde{G}(\mathbb{A}_K)$ is cuspidal, where $K$ is the discriminant algebra of $V$ as above, and the Jacquet-Langlands transfer of $\tau_K$ to $D^{\times}(\mathbb{A}_{E \otimes K})$ exists. (See the top of \cite[pg.\ 237]{GI11} for the detail.)

Then Gan-Ichino essentially proved the following.
\begin{theorem}[Gan-Ichino]
Let $\pi_5$ and $\pi_4$ be as above. Assume there exists a Hecke character $\omega$ such that $\omega^2=\omega_{\pi_5}\omega_{\pi_4}$. Then for factorizable $\phi\in V_{\pi_5}$ and $f\in V_{\pi_4}$, we have
\[
|\mathcal{P}_\omega(\phi, f)|^2=\frac{\zeta(2)\zeta(4)L(1/2,\pi_5 \times \pi_4\otimes \omega^{-1})}{2^{\beta}|\mathfrak{X}_{\tau}|L(1,\pi_5,\Ad)(L(1,\pi_4,\Ad) }\prod_v \alpha_{\omega_v}(\phi_v, f_v),
\]
where
\[
\beta=
\begin{cases} 3 & \text { if $\disc (V)=1$};\\
2 & \text { if $\disc(V)\neq 1$}.
\end{cases}
\]
\end{theorem}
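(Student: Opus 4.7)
The plan is to follow the same translation strategy used in the triple product case, replacing the period on $\GSpin_4(\A)$ by a period on the bigger group $\widetilde{G}_4(\A)$ and then interpreting the resulting integral as one that Gan--Ichino computed in \cite{GI11}. Concretely, since $\tau \|_{G_4} = \pi_4$, any $\phi \in V_{\pi_4}$ can be written as $\phi = \tilde{\phi}|_{G_4(\A)}$ for some $\tilde{\phi} \in V_\tau$; using that $\pi_5$ is the theta lift of a cuspidal representation $\sigma$ of $\GO(V)(\A)$, I would substitute the theta integral for the function $f \in V_{\pi_5}$ on $\GSpin_5(\A) = \GSp_4(\A)$ and then rewrite $\mathcal{P}_\omega(f, \phi)$ as a period integral involving $\sigma$, $\tau$, and the Weil representation $\omega_\psi$ of the dual pair $(\GSp_4, \GO(V))$. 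After a standard unfolding (using the restriction to $\GSpin_4(\A) \subset \GSp_4(\A)$ and the seesaw $(\GSp_4, \GO(V))$ versus $(\GSp_2 \times \GSp_2, \GO(V))$, or equivalently the Shimizu lift description), the period reduces to exactly the integral studied by Gan--Ichino.

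The second step is to apply the main formula of \cite{GI11} to this period integral. This gives an identity of the form
\[
|\mathcal{P}_\omega(\phi, f)|^2 = (\text{constants}) \cdot \frac{L(1/2, \tau \boxtimes \sigma \otimes \omega^{-1})}{L(1, \tau, \Ad) L(1, \sigma, \Ad)} \prod_v I_v,
\]
where $I_v$ is Gan--Ichino's local integral defined via the matrix coefficients of $\tau_v$ and $\sigma_v$ (rather than of $\pi_{4,v}$ and $\pi_{5,v}$). The tensor product $L$-function $L(s, \tau \boxtimes \sigma \otimes \omega^{-1})$ is then identified with $L(s, \pi_5 \times \pi_4 \otimes \omega^{-1})$ via base change and the theta-lift/Langlands parameter relation; similarly the adjoint $L$-functions are identified, using that the Langlands parameter of $\pi_5$ is determined by that of $\sigma$ through the theta correspondence.

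The third step is to pass from $\mathcal{B}_{\tau_v}$ to $\mathcal{B}_{\pi_{4,v}}$ on the local matrix coefficients (exactly as in the triple product case), using $\mathcal{B}_{\pi_{4,v}} = \mathcal{B}_{\tau_v}|_{\pi_{4,v}\times\pi_{4,v}}$ at all but one place. The resulting global discrepancy between $\mathcal{B}_{\pi_4}$ and $\mathcal{B}_\tau$ is computed by Hiraga--Sato \cite[Remark 4.20]{HS12} and equals
\[
C = |\mathfrak{X}_\tau| \cdot \frac{\operatorname{Vol}(Z_4^\circ(\A) G_4(F) \backslash G_4(\A))}{\operatorname{Vol}(Z_{\widetilde{G}}^\circ(\A) \widetilde{G}_4(F) \backslash \widetilde{G}_4(\A))}.
\]
A direct Tamagawa-volume calculation (using $\SO_4 \cong (\SL_2 \times \SL_2)/\mu_2$ in the split case and its $E/F$-twisted analogue in the quasi-split case) gives these volumes, so after normalizing the local $\alpha_{\omega_v}^\natural$ to $\alpha_{\omega_v}$ as in the introduction, the global factor collecting these constants is exactly $\frac{1}{|\mathfrak{X}_\tau|}$ in both cases, and $\zeta(2)\zeta(4) = \Delta_{\SO_5}$ emerges on top from the Gan--Ichino constant of proportionality.

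The main obstacle will be the bookkeeping of normalizing factors: tracking the Tamagawa measures under the inclusions $G_3 \subset G_4 \subset \widetilde{G}_4$ and $\GSpin_5 = \GSp_4$, reconciling Gan--Ichino's conventions on local pairings with ours, and keeping the two cases $\disc(V)=1$ and $\disc(V)\ne 1$ aligned so that both yield the stated formula with the correct value of $\beta$ (namely $3$ or $2$). Finally, to recover Conjecture \ref{C:conj2} in this setting one has to identify $|\mathfrak{X}_\tau|$ with the relevant product of component-group orders using \cite[Section 6]{LM15}, which I would do by the same calculation as in the $n=4$ case, once the constants on both sides have been matched.
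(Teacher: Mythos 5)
Your proposal is substantially more work than the paper's actual proof, which is a one-paragraph citation: the author simply observes that Gan--Ichino's Theorem~1.1, applied with $\pi=\pi_5$ and $\pi'=\pi_4\otimes\omega^{-1}$, already gives the stated formula (together with the remark that if $\pi_4$ satisfies the technical hypotheses needed for Gan--Ichino, then so does the twist $\pi_4\otimes\omega^{-1}$, so the application is legitimate). Nothing like your Step 1 appears in the paper, and rightly so: substituting the theta integral for the vector in $V_{\pi_5}$, running the $(\GSp_4,\GO(V))$ versus $(\GSp_2\times\GSp_2,\GO(V))$ seesaw, and reducing to the Shimizu-lift period is precisely the \emph{internal} content of Gan--Ichino's proof, not something you need to do in order to \emph{apply} their theorem. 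Redoing it here would amount to reproving their result rather than invoking it; if you take this route you owe the reader a full accounting of convergence, the Siegel--Weil input, and the see-saw identity, none of which are needed if the theorem is cited as a black box.

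Your Steps 2 and 3, on the other hand, are aligned with what actually has to happen, and in fact they make explicit something the paper's proof elides: the translation of the local pairings from $\mathcal{B}_{\tau_v}$ on the covering group $\widetilde{G}_4$ to $\mathcal{B}_{\pi_{4,v}}$ on $\GSpin_4$ via the Hiraga--Sato constant $|\mathfrak{X}_\tau|\cdot\mathrm{Vol}(\cdot)/\mathrm{Vol}(\cdot)$, exactly as in the $n=4$ (triple-product) subsection. This is the source of the $|\mathfrak{X}_\tau|$ factor in the denominator and is the genuine bookkeeping required to move between Gan--Ichino's conventions and the $\alpha_{\omega_v}$ defined in this paper. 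So the honest assessment is: your proposal is correct in direction but overshoots by folding the proof of Gan--Ichino's theorem into its application; trim Step 1 to a citation, keep the Hiraga--Sato normalization (Step 3) as the real translation work, add the small remark that the hypotheses on $\pi_4$ are stable under twisting by $\omega^{-1}$, and correct the transposition of $\phi$ and $f$ (the theorem has $\phi\in V_{\pi_5}$, $f\in V_{\pi_4}$, whereas you swap them in your Step 1), and you will have exactly the intended argument.
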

\begin{proof}
This is essentially Theorem 1.1 of \cite{GI11} with the notation adjusted to ours by setting $\pi=\pi_5$ and $\pi'=\pi_4\otimes\omega^{-1}$, where $\pi$ and $\pi'$ are as in \cite{GI11}. But it should be mentioned that if $\pi_4$ satisfies the above mentioned conditions, then so does $\pi_4\otimes\omega^{-1}$, and hence we can use the Gan-Ichino formula for $\pi_5\times\pi_4\otimes\omega^{-1}$.
\end{proof}

Now, let us take care of the constant $2^{\beta}|\mathfrak{X}_{\tau}|$. By \cite[Section 6]{LM15}, we know $|\mathfrak{X}_{\tau}|=|\mathcal{S}_{\phi_4}|$.  For the representation $\pi_5$ considered here, Roberts essentially verified in \cite{Rob01} that
$$|\mathcal{S}_{\phi_5}|=\begin{cases} 2 \text { if $\disc(V)=1$};\\
1 \text{ if $\disc(V)\neq 1$}.\\
\end{cases}
$$
Hence if $\disc(V)=1$ we have
\[
2^{\beta}|\mathfrak{X}_{\tau}|=2^3|\mathfrak{X}_{\tau}|=2^2|\mathcal{S}_{\phi_5}||\mathcal{S}_{\phi_4}|,
\]
and if $\disc V\neq 1$ we have
\[
2^{\beta}|\mathfrak{X}_{\tau}|=2^2|\mathfrak{X}_{\tau}|=2^2|\mathcal{S}_{\phi_5}||\mathcal{S}_{\phi_4}|.
\]
Thus in either case the above theorem confirms Conjectures \ref{C:conj} and \ref{C:conj2}.

\begin{remark}
In the above theorem, Gan-Ichino assumed that $F$ and $E$ are totally real number fields. This assumption was to utilize the Siegel-Weil formula. (See \cite[Remark 1.3]{GI11}.) However, the condition is no longer necessary thanks to the work of Gan-Qui-Takeda in \cite{GTQ14}. Also in \cite{Rob01}, Roberts assumed that $F$ and $E$ are totally real essentially for the same reason, and hence this assumption is not necessary, either.

Also Gan-Ichino do not assume that $\pi_5$ and $\pi_4$ are tempered. This is because for the case at hand the convergence of the local integral as we did in Proposition \ref{P:convergence_local_period} can be shown by using the Kim-Shahidi estimate as in \cite[Lemma 9.1]{GI11}. Hence we do not even need to assume $\pi_5$ and $\pi_4$ are tempered.
\end{remark}

\bibliographystyle{abbrv}
\addcontentsline{toc}{section}{Bibliography}
\bibliography{globalgspin}

\begin{thebibliography}{10}

\bibitem{Art13}
J.~Arthur.
\newblock {\em The endoscopic classification of representations}, volume~61 of
  {\em American Mathematical Society Colloquium Publications}.
\newblock American Mathematical Society, Providence, RI, 2013.
\newblock Orthogonal and symplectic groups.

\bibitem{Asg00}
M.~Asgari.
\newblock {\em On holomorphy of local {L}anglands {L}-functions}.
\newblock ProQuest LLC, Ann Arbor, MI, 2000.

\bibitem{Asg02}
M.~Asgari.
\newblock Local {$L$}-functions for split spinor groups.
\newblock {\em Canad. J. Math.}, 54(4):673--693, 2002.

\bibitem{AC16}
M.~Asgari and K.~Choiy.
\newblock The local {L}anglands conjecture for {$p$}-adic {$\rm GSpin_4$},
  {$\rm GSpin_6$}, and their inner forms.
\newblock {\em Forum Math.}, 29(6):1261--1290, 2017.

\bibitem{AS06}
M.~Asgari and F.~Shahidi.
\newblock Generic transfer for general spin groups.
\newblock {\em Duke Math. J.}, 132(1):137--190, 2006.

\bibitem{Bump}
D.~Bump.
\newblock {\em Automorphic forms and representations}, volume~55 of {\em
  Cambridge Studies in Advanced Mathematics}.
\newblock Cambridge University Press, Cambridge, 1997.

\bibitem{Del72}
P.~Deligne.
\newblock La conjecture de {W}eil pour les surfaces {$K3$}.
\newblock {\em Invent. Math.}, 15:206--226, 1972.

\bibitem{GGP12}
W.~T. Gan, B.~H. Gross, and D.~Prasad.
\newblock Symplectic local root numbers, central critical {$L$} values, and
  restriction problems in the representation theory of classical groups.
\newblock {\em Ast\'erisque}, (346):1--109, 2012.
\newblock Sur les conjectures de Gross et Prasad. I.

\bibitem{GI11}
W.~T. Gan and A.~Ichino.
\newblock On endoscopy and the refined {G}ross-{P}rasad conjecture for {$(\rm
  SO_5,SO_4)$}.
\newblock {\em J. Inst. Math. Jussieu}, 10(2):235--324, 2011.

\bibitem{GTQ14}
W.~T. Gan, Y.~Qiu, and S.~Takeda.
\newblock The regularized {S}iegel-{W}eil formula (the second term identity)
  and the {R}allis inner product formula.
\newblock {\em Invent. Math.}, 198(3):739--831, 2014.

\bibitem{GP92}
B.~H. Gross and D.~Prasad.
\newblock On the decomposition of a representation of {${\rm SO}_n$} when
  restricted to {${\rm SO}_{n-1}$}.
\newblock {\em Canad. J. Math.}, 44(5):974--1002, 1992.

\bibitem{Har14}
R.~N. Harris.
\newblock The refined {G}ross-{P}rasad conjecture for unitary groups.
\newblock {\em Int. Math. Res. Not. IMRN}, (2):303--389, 2014.

\bibitem{Hel00}
S.~Helgason.
\newblock {\em Groups and geometric analysis}, volume~83 of {\em Mathematical
  Surveys and Monographs}.
\newblock American Mathematical Society, Providence, RI, 2000.
\newblock Integral geometry, invariant differential operators, and spherical
  functions, Corrected reprint of the 1984 original.

\bibitem{HS12}
K.~Hiraga and H.~Saito.
\newblock On {$L$}-packets for inner forms of {$SL_n$}.
\newblock {\em Mem. Amer. Math. Soc.}, 215(1013):vi+97, 2012.

\bibitem{HS16}
J.~Hundley and E.~Sayag.
\newblock Descent construction for {GS}pin groups.
\newblock {\em Mem. Amer. Math. Soc.}, 243(1148):v+124, 2016.

\bibitem{Ich08}
A.~Ichino.
\newblock Trilinear forms and the central values of triple product
  {$L$}-functions.
\newblock {\em Duke Math. J.}, 145(2):281--307, 2008.

\bibitem{II10}
A.~Ichino and T.~Ikeda.
\newblock On the periods of automorphic forms on special orthogonal groups and
  the {G}ross-{P}rasad conjecture.
\newblock {\em Geom. Funct. Anal.}, 19(5):1378--1425, 2010.

\bibitem{KMRM98}
M.-A. Knus, A.~Merkurjev, M.~Rost, and J.-P. Tignol.
\newblock {\em The book of involutions}, volume~44 of {\em American
  Mathematical Society Colloquium Publications}.
\newblock American Mathematical Society, Providence, RI, 1998.
\newblock With a preface in French by J. Tits.

\bibitem{Lam05}
T.~Y. Lam.
\newblock {\em Introduction to quadratic forms over fields}, volume~67 of {\em
  Graduate Studies in Mathematics}.
\newblock American Mathematical Society, Providence, RI, 2005.

\bibitem{LM15}
E.~Lapid and Z.~Mao.
\newblock A conjecture on {W}hittaker-{F}ourier coefficients of cusp forms.
\newblock {\em J. Number Theory}, 146:448--505, 2015.

\bibitem{PSR87}
I.~Piatetski-Shapiro and S.~Rallis.
\newblock Rankin triple {$L$} functions.
\newblock {\em Compositio Math.}, 64(1):31--115, 1987.

\bibitem{Rob01}
B.~Roberts.
\newblock Global {$L$}-packets for {${\rm GSp}(2)$} and theta lifts.
\newblock {\em Doc. Math.}, 6:247--314, 2001.

\bibitem{Sch85}
W.~Scharlau.
\newblock {\em Quadratic and {H}ermitian forms}, volume 270 of {\em Grundlehren
  der Mathematischen Wissenschaften [Fundamental Principles of Mathematical
  Sciences]}.
\newblock Springer-Verlag, Berlin, 1985.

\bibitem{Sil79}
A.~J. Silberger.
\newblock {\em Introduction to harmonic analysis on reductive {$p$}-adic
  groups}, volume~23 of {\em Mathematical Notes}.
\newblock Princeton University Press, Princeton, N.J.; University of Tokyo
  Press, Tokyo, 1979.
\newblock Based on lectures by Harish-Chandra at the Institute for Advanced
  Study, 1971--1973.

\bibitem{Wal85}
J.-L. Waldspurger.
\newblock Sur les valeurs de certaines fonctions {$L$} automorphes en leur
  centre de sym\'etrie.
\newblock {\em Compositio Math.}, 54(2):173--242, 1985.

\bibitem{Wal03}
J.-L. Waldspurger.
\newblock La formule de {P}lancherel pour les groupes {$p$}-adiques (d'apr\`es
  {H}arish-{C}handra).
\newblock {\em J. Inst. Math. Jussieu}, 2(2):235--333, 2003.

\bibitem{Xue}
H.~Xue.
\newblock Refined global {G}an-{G}ross-{P}rasad conjecture for
  {F}ourier-{J}acobi periods on symplectic groups.
\newblock {\em Compos. Math.}, 153(1):68--131, 2017.

\bibitem{YZZ13}
X.~Yuan, S.-W. Zhang, and W.~Zhang.
\newblock {\em The {G}ross-{Z}agier formula on {S}himura curves}, volume 184 of
  {\em Annals of Mathematics Studies}.
\newblock Princeton University Press, Princeton, NJ, 2013.

\end{thebibliography}

\end{document}